\documentclass[10pt,leqno]{amsart}

\usepackage{graphicx}
\usepackage{indentfirst,csquotes}
\usepackage{multicol}
\usepackage{amssymb,amsthm,amsmath,amsfonts}
\usepackage{xcolor,paralist,hyperref,fancyhdr,etoolbox}
\usepackage{comment}

\hypersetup{colorlinks=true, linkcolor=black, filecolor=black, urlcolor=black}

\newcommand{\AuthorBlock}[3]{%
  \noindent\textbf{#1}\\%
  #2\\%
  E-mail: \texttt{#3}%
  \par\vspace{1em}%
}
\makeatletter
\patchcmd{\@settitle}
  {\uppercasenonmath\@title}
  {}
  {}
  {}

\patchcmd{\@setauthors}
  {\MakeUppercase{\authors}}
  {\authors}
  {}
  {}
\makeatother
\newtheorem{theorem}{Theorem}[section]
\newtheorem{definition}[theorem]{Definition}

\newtheorem{lemma}[theorem]{Lemma}
\newtheorem{proposition}[theorem]{Proposition}
\newtheorem{corollary}[theorem]{Corollary}
\newtheorem{conjecture}[theorem]{Conjecture}
\newtheorem*{remark}{Remark}
\newtheorem{question}[theorem]{Problem}

\newcommand{\D}{\mathbb{D}}

\newcommand{\C}{\mathbb{C}}

\newcommand{\E}{\mathbb{E}}
\newcommand{\RCP}{\text{RCP}}

\newcommand{\pac}{{\mathcal{P}}}

\newcommand{\eps}{\varepsilon}

\title[A Gauss-Bonnet-Type Dichotomy]
{A Gauss-Bonnet-Type Dichotomy for Unimodular Random Infinite Trivalent Hyperbolic Polyhedra}

\author[Ge, Lu, Wang and Zhou]
{Huabin Ge, Yangxiang Lu, Chuwen Wang, and Tian Zhou}

\date{\today}

\begin{document}

\begin{abstract}
We develop a unified geometric and probabilistic theory of conformal type for unimodular random infinite trivalent hyperbolic polyhedra in \(\mathbb H^3\). Through the correspondence with dual angled disk triangulations and regular circle patterns, we associate to each face an intrinsic geometric characteristic number \(L_f(P)\), entirely determined by the local three-dimensional dihedral geometry. For the root face \(f\), we establish the unimodular Gauss–Bonnet formula

\[
\mathbb E[L_f(P)]=2\pi-\frac{\pi}{3}\mathbb E[\deg(f)] .
\]

Under natural tameness and admissibility assumptions, this yields a sharp parabolic-hyperbolic dichotomy: a unimodular random trivalent hyperbolic polyhedron is parabolic precisely when \(\mathbb E[L_f(P)]=0\), and hyperbolic precisely when \(\mathbb E[L_f(P)]<0\). Thus, the global conformal type of the polyhedron is governed by the expectation of a local three-dimensional geometric quantity.

Furthermore, we investigate the approximation of infinite polyhedra by finite ones. We prove that every admissible Benjamini-Schramm limit of uniformly face-rooted finite trivalent hyperbolic polyhedra is necessarily parabolic. This reveals a geometric and topological obstruction to the existence of hyperbolic unimodular polyhedral limits and clarifies the distinction between finite approximations and genuinely infinite hyperbolic structures.

To study the stochastic behavior in the hyperbolic regime, one faces a fundamental technical barrier: classical circle packing tools break down in the presence of unbounded degrees. To overcome this, we establish a refined ring lemma adapted to regular circle patterns, providing effective exponential control of adjacent circle radii via local flower degrees. Combined with boundary methods and the corresponding ideal theory, these estimates allow us to identify the Poisson boundary with the circle at infinity and prove positive hyperbolic speed for the face random walk. Together, these results establish, for the first time, a quantitative framework connecting local three-dimensional dihedral geometry, global conformal type, and the asymptotic stochastic behavior of unimodular random infinite hyperbolic polyhedra.
\end{abstract}

\maketitle
\section{Introduction}

The classical theory of convex hyperbolic polyhedra is largely deterministic. It typically asks whether prescribed combinatorial or angular data can be realized by a unique hyperbolic polyhedron, and how its geometry is encoded by dihedral angles, circle patterns, or variational principles. In the finite setting, this geometric theory goes back to the foundational work of Andreev, Thurston, Rivin, and Rivin-Hodgson \cite{Andreev1970,Andreev1970finite,RivinHodgson1993,Rivin1994,Rivin1996,RoederHubbardDunbar2007,Thurston1979}, and was later extended to hyperideal polyhedra and circle patterns \cite{BaoBonahon2002,BobenkoSpringborn2004,Schlenker2005}. Infinite circle patterns and infinite hyperbolic polyhedra have been studied more recently \cite{GeLin2024,GeRCP}. By contrast, much less is known when both the combinatorics and the geometry are stochastic, and the polyhedron has infinitely many faces. Even the basic problem of determining the global asymptotic geometry of such polyhedra from statistically observable local data remains largely open.

In a parallel direction, the theory of unimodular random planar maps and triangulations has achieved substantial success in establishing deep dichotomies relating circle-packing types, amenability, recurrence, random walks, and Poisson boundaries \cite{AldousLyons2007,angel2016unimodular,angel2018hyperbolic,HutchcroftPeres2017}. While purely probabilistic approaches to unimodular planar maps have been highly successful, they do not intrinsically capture the rigid three-dimensional constraints of polyhedral geometry. Consequently, importing unimodularity strictly as a random graph device is insufficient for our setting. Previous attempts to study random infinite hyperbolic polyhedra were highly restrictive, primarily isolating the ideal condition and excluding both ordinary and hyperideal vertices, which leads to a particularly simple local angle invariant \cite{IIP2026,GeYuZhou2025}. Consequently, there is a need for a theory that bridges the gap between pure random graph limits and genuine three-dimensional polyhedral geometry.

In the present paper, we develop a unified geometric and probabilistic framework of conformal type for unimodular random infinite trivalent hyperbolic polyhedra, fully incorporating ordinary, ideal, and hyperideal vertices. The central local invariant in our theory is a geometric characteristic number, $L_{f}(P)$, determined entirely by the local three-dimensional dihedral geometry of the polyhedron itself. In the ideal case, this geometric characteristic agrees with the angle defect introduced in \cite{IIP2026}, showing that the present invariant extends the ideal theory. For the root face $f$, we establish an infinite-volume Gauss-Bonnet principle relating the expectation of this intrinsic local quantity to the conformal type of the polyhedron: the expectation of this intrinsic local angle defect dictates whether the entire polyhedron is parabolic or hyperbolic. Thus, the purpose of this paper is not merely to transfer results on unimodular planar maps to an angle-weighted combinatorial model, but to develop a geometric theory of conformal types for random infinite polyhedra in hyperbolic three-space.

\subsection{Infinite trivalent hyperbolic polyhedra}
A convex hyperbolic polyhedron is an intersection
\[
P=\bigcap_{i\in I}H_{i}\subset\mathbb{H}^{3}
\]
of closed hyperbolic half-spaces. We call $P$ trivalent if every vertex is incident to exactly three edges. The dual 1-skeleton $M(P)$ of a trivalent hyperbolic polyhedron is therefore a triangulation.

Throughout this paper, a rooted polyhedron is rooted at a distinguished face. If two adjacent faces of $P$ meet along an edge $e$, we write $\Theta(e)$ for their dihedral angle. Under Poincar\'e duality, $\Theta$ becomes an angle weight on the edges of $M(P)$. The natural random object associated with a rooted trivalent hyperbolic polyhedron is consequently a rooted marked planar triangulation
\[
(M(P),\rho,\Theta),
\]
where $\rho$ is the dual vertex corresponding to the distinguished face of $P$. Unimodularity is understood in the space of random rooted marked planar maps.

We now introduce the local geometric quantity governing the global type of the polyhedron.

\begin{definition}[Geometric characteristic number of a THP]
Let $P$ be a trivalent hyperbolic polyhedron in $\mathbb{H}^{3}$ and let $v$ be one of its vertices. Denote the three faces incident to $v$ by $f_{1}, f_{2}, f_{3}$, and write
\[
e_{ij}=f_{i}\cap f_{j}, \quad 1\le i<j\le 3.
\]
Let $\Theta_{ij}$ be the dihedral angle of $P$ along $e_{ij}$. For $\{i,j,k\}=\{1,2,3\}$, define
\[
\theta_{v}^{f_{i}}=\arccos\left(\frac{1+\cos\Theta_{ij}+\cos\Theta_{ik}-\cos\Theta_{jk}}{2\sqrt{1+\cos\Theta_{ij}}\sqrt{1+\cos\Theta_{ik}}}\right)
\]
Equivalently, $\theta_{v}^{f_{i}}$ is the angle at the vertex corresponding to $f_{i}$ in the auxiliary Euclidean triangle whose side lengths are $\ell_{ij}=\sqrt{2+2\cos\Theta_{ij}}$ for $1\le i<j\le 3$.

For a face $f$ of $P$, its geometric characteristic number is defined by
\[
L_{f}(P)=2\pi-\sum_{v\in f}\theta_{v}^{f}.
\]
The geometric characteristic number of $P$ is the collection $L(P)=(L_{f}(P): f\in F(P))$, where $F(P)$ denotes the set of faces of $P$.
\end{definition}

The quantity $L_{f}(P)$ is a facewise angle defect determined entirely by the local dihedral geometry of the polyhedron; it is not an artificially assigned curvature of the dual graph. An infinite trivalent hyperbolic polyhedron $P$ has two possible conformal types---parabolic and hyperbolic---distinguished by the asymptotic behavior of its truncated polyhedron $P^{\rm trun}$ escaping to infinity \cite{GeRCP}. Theorem \ref{thm:polyhedral-dichotomy} affirmatively demonstrates that the global asymptotic type of the polyhedron is determined by the expectation of this local intrinsic geometric quantity.

\subsection{THP/ADT duality and the RCP-THP correspondence}
To study trivalent hyperbolic polyhedra, we use angled disk triangulations as combinatorial models for their dual 1-skeleta.

\begin{definition}[ADT]
An angled disk triangulation (ADT) is a pair $(G,\Theta)$, where $G=(V, E)$ is a disk triangulation graph and $\Theta: E \to [0, \pi)$ assigns an angle to each edge. A rooted ADT is a triple $(G, \rho, \Theta)$ with a distinguished vertex $\rho\in V$.
\end{definition}

A random rooted ADT is called unimodular if it satisfies the mass transport principle. We impose a uniform tameness assumption and, separately, the admissibility conditions (Z1)--(Z4). These conditions make it possible to relate the global conformal type of the pattern to local angular and combinatorial data. A regular circle pattern (RCP) realizing $(G, \Theta)$ dictates that the associated circles at infinity bound hyperbolic half-spaces in $\mathbb{H}^{3}$, and their intersection angles correspond to the dihedral angles of the resulting polyhedron---a geometric realization referred to as the RCP-THP correspondence \cite{GeRCP}.

\subsection{A Gauss-Bonnet type dichotomy}
We establish a parabolic-hyperbolic dichotomy for unimodular angled disk triangulations.

\begin{theorem}[ADT dichotomy]
\label{thm:dichotomy}
Let $(G,\rho,\Theta)$ be an infinite, locally finite, simple, 
one-ended, ergodic, tame unimodular random ADT satisfying 
\emph{(Z1)--(Z4)}. If $\mathbb{E}[\deg(\rho)]<\infty$, then the sign of the expected geometric characteristic number determines 
the conformal type of the ADT:
\[
\mathbb{E}[L_\rho(G,\Theta)]=0
\quad\Longleftrightarrow\quad
G\ \text{is VEL-parabolic},
\]
whereas
\[
\mathbb{E}[L_\rho(G,\Theta)]<0
\quad\Longleftrightarrow\quad
G\ \text{is VEL-hyperbolic}.
\]
Moreover, these conditions are equivalent to the corresponding
circle-pattern, amenability, and random-walk characterizations (see
Theorem~\ref{thm:dichotomya}).
\end{theorem}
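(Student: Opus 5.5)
The plan is to split the statement into a deterministic-plus-unimodular \emph{Gauss--Bonnet identity},
\[
\mathbb{E}[L_\rho(G,\Theta)] \;=\; 2\pi-\frac{\pi}{3}\,\mathbb{E}[\deg(\rho)] \;=\; \frac{\pi}{3}\bigl(6-\mathbb{E}[\deg(\rho)]\bigr),
\]
and the known curvature dichotomy for unimodular planar triangulations. Let $T(x)$ denote the set of triangular faces of $G$ containing the vertex $x$.

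\textbf{Step 1: well-definedness and local bookkeeping.} For each triangle $t$ of $G$ with vertices $x_1,x_2,x_3$, the auxiliary triangle has side lengths $\ell_{ij}=\sqrt{2+2\cos\Theta_{ij}}$. I will use the admissibility conditions (Z1)--(Z4), specifically the ones encoding that the three half-spaces meet in a genuine, ideal, or hyperideal vertex, to check that the $\ell_{ij}$ satisfy the triangle inequality. Possibly degenerate triangles, which occur in the ideal case, are allowed. Then the angles $\theta_t^{x_i}$ are well defined and
\[
\theta_t^{x_1}+\theta_t^{x_2}+\theta_t^{x_3}=\pi .
\]
Because $G$ is a simple one-ended disk triangulation, each vertex $x$ lies in exactly $\deg(x)$ triangles, so $|T(x)|=\deg(x)$. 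Hence
\[
L_x=2\pi-\sum_{t\in T(x)}\theta_t^{x}.
\]
Every $L_x$ lies in $[2\pi-\deg(x)\pi,\,2\pi]$. Since $\mathbb{E}[\deg\rho]<\infty$, $L_\rho$ is integrable; tameness is used only to guarantee this measurability and integrability.

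\textbf{Step 2: the mass transport.} Define the transport
\[
F(G,x,y,\Theta)=\frac13\sum_{t\in T(x)\cap T(y)}\theta_t^{y},
\]
where $y$ may equal $x$. The mass sent out from $x$ is
\[
\sum_{t\in T(x)}\frac13\sum_{y\in t}\theta_t^{y}=\frac{\pi}{3}\deg(x).
\]
The mass received at $y$ is
\[
\sum_{t\in T(y)}\theta_t^{y},
\]
because each of the three vertices of $t$ contributes $\theta_t^y/3$. The mass transport principle then gives
\[
\mathbb{E}\Bigl[\sum_{t\in T(\rho)}\theta_t^\rho\Bigr]=\frac{\pi}{3}\mathbb{E}[\deg\rho],
\]
which is exactly the identity displayed above.

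\textbf{Step 3: the dichotomy.} For an ergodic, one-ended, simple unimodular planar triangulation with $\mathbb{E}[\deg\rho]<\infty$, the Angel--Hutchcroft--Nachmias--Ray theorem gives $\mathbb{E}[\deg\rho]\ge 6$. It also shows that the following are equivalent: $\mathbb{E}[\deg\rho]=6$, invariant amenability, recurrence, and VEL-parabolicity (CP-parabolicity). Likewise, $\mathbb{E}[\deg\rho]>6$ is equivalent to invariant nonamenability and VEL-hyperbolicity. Ergodicity ensures these two alternatives are exhaustive. By Step 2, $\mathbb{E}[\deg\rho]=6$ corresponds to $\mathbb{E}[L_\rho]=0$ and $\mathbb{E}[\deg\rho]>6$ corresponds to $\mathbb{E}[L_\rho]<0$. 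In particular $\mathbb{E}[L_\rho]\le 0$ always, and both stated equivalences follow.

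The ``moreover'' clause is obtained by quoting Theorem~\ref{thm:dichotomya}. There the VEL type is identified with the type of the regular circle pattern realizing $(G,\Theta)$, via He's uniformization and the RCP--THP correspondence.

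\textbf{Main obstacle.} I expect the main obstacle to be Step 1: showing, uniformly over ordinary, ideal, and hyperideal vertices, that (Z1)--(Z4) force the auxiliary Euclidean triangle to exist, so that the angle sum is exactly $\pi$. I would first try to derive the triangle inequality $\ell_{ij}\le \ell_{ik}+\ell_{jk}$ from the vertex-type condition $\Theta_{ij}+\Theta_{ik}+\Theta_{jk}\ge\pi$ together with $\Theta\in[0,\pi)$, by a direct trigonometric argument with half-angles. The reason is that $\ell_{ij}=2\cos(\Theta_{ij}/2)$.
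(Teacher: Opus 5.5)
Your argument follows the paper's route. The paper obtains Theorem~\ref{thm:dichotomy} by combining three ingredients:
\begin{itemize}
\item the unimodular Gauss--Bonnet identity (Theorem~\ref{thm:chr2deg}), proved by mass transport;
\item the Angel--Hutchcroft--Nachmias--Ray dichotomy, for VEL type and invariant amenability;
\item the RCP uniformization of Ge--Jia--Yu--Zhou, for the circle-pattern clause (Theorem~\ref{thm:dichotomya}).
\end{itemize}
Your transport is only a bookkeeping variant of the paper's. The paper sets $m(u,u)=2\pi-L_u$ and $m(u,v)=\theta_u^{vw_1}+\theta_u^{vw_2}$ for $u\sim v$, so $u$ sends $6\pi-3L_u$ and receives $\pi\deg(u)$. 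Your $F$ sends $\frac{\pi}{3}\deg(x)$ and receives $2\pi-L_y$. Both use only $|T(x)|=\deg(x)$ and the angle sum $\pi$ in each auxiliary triangle; yours is slightly cleaner.

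The step that would not work as you sketch it is Step~1. The paper does not prove it; it only asserts \eqref{angle sum}. You are right that the identity needs the triangle inequality for $\ell_{ij}=2\cos(\Theta_{ij}/2)$. However, $\Theta_{ij}+\Theta_{ik}+\Theta_{jk}\ge\pi$ is the wrong input, for two reasons.
\begin{itemize}
\item At a hyperideal vertex the three dihedral angles sum to \emph{less} than $\pi$; the sum is $>\pi$ at ordinary vertices and $=\pi$ at ideal ones. So your hypothesis discards exactly the vertices the paper is meant to include.
\item Even where it holds, it does not imply the triangle inequality. Take $\Theta_{12}=\Theta_{13}=\pi-\delta$, $\Theta_{23}=0$. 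The angle sum is $2\pi-2\delta\ge\pi$, yet $\ell_{12}+\ell_{13}=4\sin(\delta/2)<2=\ell_{23}$ for small $\delta$.
\end{itemize}

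The correct input is (Z1). Write $a=\Theta_{ij}/2$, $b=\Theta_{ik}/2$, $c=\Theta_{jk}/2$, all in $[0,\pi/2)$, for each choice of the side opposite to $c$.
\begin{itemize}
\item If $a+b\le\pi/2$, then $\cos b\ge\sin a$, so $\cos a+\cos b\ge\cos a+\sin a\ge 1\ge\cos c$.
\item If $a+b>\pi/2$, the angle sum at the vertex exceeds $\pi$. Then (Z1) on the dual face gives $a+b<\pi/2+c$, hence
\[
\cos c<\cos\Bigl(a+b-\frac{\pi}{2}\Bigr)=\sin(a+b)=\sin a\cos b+\cos a\sin b\le\cos a+\cos b .
\]
\end{itemize}
This covers ordinary, ideal and hyperideal vertices at once. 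The inequality is in fact strict, so the auxiliary triangles are never degenerate. In particular, contrary to your remark, nothing degenerates in the ideal case, where $\theta_v^f=(A+B)/2\in(0,\pi)$ by Proposition~\ref{prop:compatibility-ideal}.

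Two smaller points.
\begin{itemize}
\item Recurrence is not one of the unconditional equivalences. Recurrence implies VEL-parabolicity, but the converse needs control on the degrees. The paper only claims recurrence in the parabolic case for uniformly bounded degrees (item (vi) of Theorem~\ref{thm:dichotomya}). This does not affect the two displayed equivalences.
\item Tameness is not what makes $L_\rho$ integrable; that follows from $0\le\theta\le\pi$ and $\mathbb{E}[\deg\rho]<\infty$. Tameness is needed for the existence and rigidity of the RCP in the circle-pattern clause.
\end{itemize}
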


Through THP/ADT duality, this yields the following geometric dichotomy.

\begin{theorem}[THP dichotomy]
\label{thm:polyhedral-dichotomy}
Let $P$ be a tame, infinite, ergodic, unimodular random trivalent hyperbolic polyhedron satisfying (Z3) and (Z4), and let $f$ be its root face. If $\mathbb{E}[\deg(f)]<\infty$, then
\[
\mathbb{E}[L_{f}(P)]=0 \iff P \text{ is almost surely parabolic,}
\]
whereas
\[
\mathbb{E}[L_{f}(P)]<0 \iff P \text{ is almost surely hyperbolic.}
\]
\end{theorem}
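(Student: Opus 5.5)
The plan is to deduce Theorem~\ref{thm:polyhedral-dichotomy} from Theorem~\ref{thm:dichotomy} through THP/ADT duality and the RCP--THP correspondence. Concretely, we pass from the random polyhedron $P$ rooted at $f$ to its dual rooted marked triangulation $(M(P),\rho,\Theta)$, and check that this is a random rooted ADT to which Theorem~\ref{thm:dichotomy} applies.

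\textbf{Step 1: the dual object satisfies the hypotheses.} The first step is to verify each hypothesis of Theorem~\ref{thm:dichotomy} for $(M(P),\rho,\Theta)$.
\begin{itemize}
\item Trivalence of $P$ makes $M(P)$ a triangulation.
\item Since $P$ is an infinite convex polyhedron whose faces are locally finite, $M(P)$ is infinite, locally finite and simple. Two faces share at most one edge, by convexity.
\item It is one-ended: the boundary at infinity of $P$ is a disk-type region, and $M(P)$ is a disk triangulation.
\item Unimodularity, ergodicity and tameness are by definition properties of the law of $(M(P),\rho,\Theta)$, so they transfer verbatim.
\item $\deg(\rho)=\deg(f)$, hence $\mathbb{E}[\deg(\rho)]<\infty$.
\item Dihedral angles of a convex polyhedron lie in $(0,\pi)\subset[0,\pi)$, so $\Theta$ is an admissible angle assignment.
\item (Z3) and (Z4) are assumed.
\end{itemize}
What remains is (Z1)--(Z2). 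I expect these to be exactly the local conditions automatically satisfied by dihedral angles at trivalent vertices of a genuine hyperbolic polyhedron. This includes vertices that are ordinary, ideal or hyperideal: for instance, angle-sum or triangle-inequality conditions on the three $\ell_{ij}$ guarantee that the auxiliary Euclidean triangle exists and that the arccos in $\theta_v^{f_i}$ is well defined. I would verify them from the existence of the polyhedral vertex, using the Gram matrix of the three supporting planes.

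\textbf{Step 2: matching the invariants.} Next I would check that $L_\rho(M(P),\Theta)=L_f(P)$ pointwise. Each vertex $v$ of $P$ on $f$ corresponds to a triangle of $M(P)$ incident to $\rho$, and the ADT characteristic number is defined by the same Euclidean-triangle angle formula. Hence
\[
\mathbb{E}[L_f(P)]=\mathbb{E}[L_\rho(M(P),\Theta)].
\]

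\textbf{Step 3: matching the conformal types.} The last step is to identify VEL-parabolicity and VEL-hyperbolicity of $M(P)$ with parabolicity and hyperbolicity of $P$. Here I would invoke the circle-pattern characterization in Theorem~\ref{thm:dichotomya} together with the RCP--THP correspondence of \cite{GeRCP}.
\begin{itemize}
\item $G$ is VEL-parabolic iff $(G,\Theta)$ admits a regular circle pattern filling $\mathbb{C}$.
\item $G$ is VEL-hyperbolic iff it admits one filling the unit disk $\mathbb{D}$.
\item The circles bound the half-spaces whose intersection is the polyhedron realizing $\Theta$.
\item By rigidity (uniqueness up to M\"obius maps), that polyhedron is isometric to $P$.
\item The conformal type of $P$, defined in \cite{GeRCP} via the escape of $P^{\rm trun}$, coincides with whether the pattern's carrier is $\mathbb{C}$ or $\mathbb{D}$.
\end{itemize}
Combining this with Step 2 and the ergodic ``almost surely'' dichotomy of Theorem~\ref{thm:dichotomy} gives both equivalences. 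Since the two cases of the dichotomy are exhaustive and exclusive, $\mathbb{E}[L_f(P)]>0$ cannot occur.

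\textbf{Main obstacle.} I expect the hardest step to be the identification in Step 3. The point is that a given $P$ is realized by its own circle pattern, so this needs the uniqueness and rigidity part of the RCP--THP correspondence for infinite patterns, along with the hyperideal and ideal vertex cases. The verification of (Z1)--(Z2) in Step 1 is more routine but needs care at ideal and hyperideal vertices, where degenerate auxiliary triangles may occur.
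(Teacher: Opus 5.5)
Your overall route is the same as the paper's. The paper passes to the dual rooted ADT, obtains (Z1) and (Z2) from Proposition~\ref{prop:thp-z12}, and takes (Z3) and (Z4) from the hypotheses. It then uses the THP/ADT and RCP--THP correspondences to get $L_f(P)=L_\rho(G,\Theta)$ and to match the type of $P$ with the RCP type of $(G,\Theta)$, and applies Theorem~\ref{thm:dichotomy}. Your Steps 2 and 3, and your transfer of the remaining hypotheses, are a more detailed version of this.

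The one step that would not go through as you describe it is the verification of (Z2).

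\textbf{(Z1) is local.} It only applies when $\Theta(e_1)+\Theta(e_2)+\Theta(e_3)>\pi$, i.e.\ at an ordinary vertex of $P$. There the link is a spherical triangle with angles $\Theta(e_i)$, and $\Theta(e_i)+\Theta(e_j)<\pi+\Theta(e_k)$ is the triangle inequality for its polar triangle. At ideal and hyperideal vertices (Z1) is vacuous.

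\textbf{(Z2) is not a condition at vertices of $P$ at all.} It concerns simple closed curves in $G$ that do \emph{not} bound a triangular face. These are cyclic chains of faces $F_1,\dots,F_s$ of $P$, with $F_i\cap F_{i+1}$ an edge, other than the three faces at a single vertex. Examples include:
\begin{itemize}
\item separating triangles in $G$ (prismatic $3$-circuits);
\item the ring of $\deg(F)$ faces surrounding a face $F$;
\item circuits of arbitrary length.
\end{itemize}
Rewritten as $\sum_i(\pi-\Theta(e_i))>2\pi$, (Z2) is the Andreev/Rivin--Hodgson-type condition that non-facial closed curves in the polar (Gauss image) have length greater than $2\pi$. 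This is a global property of the polyhedron. The Gram matrix of the three supporting planes at one vertex carries no information about it.

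\textbf{Fix.} Replace this part of Step 1 by an appeal to Proposition~\ref{prop:thp-z12} (from \cite{GeRCP}), as the paper does. Note that this inverts your assessment of where the difficulty lies. Step 3 is a citation of the correspondence and uniformization theorems. (Z2) is where genuinely global polyhedral geometry enters, and the paper imports it rather than proving it.
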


The key identity underlying these equivalences is the unimodular Gauss-Bonnet formula:
\[
\mathbb{E}[L_{\rho}(G,\Theta)]=2\pi-\frac{\pi}{3}\mathbb{E}[\deg_{G}(\rho)].
\]
Although the pointwise quantity $L_{\rho}(G,\Theta)$ depends on the entire local angle configuration, its expectation depends only on the expected degree. The angular dependence cancels globally through unimodularity and the local Euclidean angle identity at each trivalent polyhedral vertex. This infinite-volume Gauss--Bonnet principle demonstrates that local three-dimensional geometry directly controls global conformal type.

\subsection{Polyhedral approximation and soficity}
We further investigate the approximation of these infinite structures by finite polyhedra. Motivated by the notion of soficity for unimodular random networks \cite{AldousLyons2007}, we call a unimodular random rooted infinite THP \emph{polyhedrally sofic} if its dual rooted ADT is a Benjamini-Schramm limit of uniformly face-rooted finite trivalent hyperbolic polyhedra.

Theorem \ref{thm:polyhedral-bs-parabolicity} reveals a fundamental geometric and topological obstruction:
every such admissible infinite polyhedral local limit is necessarily parabolic. The finite Gauss-Bonnet identity implies that every such local limit has vanishing expected geometric characteristic, and hence is parabolic.
The underlying reason is that, although each face characteristic depends on the local dihedral geometry, the total characteristic of a finite polyhedron
is independent of the angle data and is determined only by the spherical topology of its boundary. In the limit of uniformly rooted finite polyhedra, this topological contribution disappears, forcing the limiting
expected characteristic to vanish. It follows, in particular, that the hyperbolic regime represents a genuinely infinite geometric structure. 

Our obstruction is genuinely geometric and topological rather than purely graph-theoretic. Consequently, while the underlying graphs of hyperbolic THPs might still be sofic in the broad sense of the Aldous-Lyons conjecture (e.g., via finite approximations on closed surfaces whose genera tend to infinity), they strictly cannot arise from finite spherical polyhedra with realizable dihedral-angle markings. This reveals a fundamental rigidity of finite positive-curvature approximations.
 As stated in Corollary \ref{cor:hyperbolic-not-polyhedrally-sofic}, every ergodic hyperbolic unimodular THP satisfying our standing assumptions is strictly \emph{not} polyhedrally sofic.

\subsection{A refined Ring Lemma and probabilistic boundary}
To study the stochastic behavior and boundary theory in the hyperbolic regime, one faces a fundamental technical barrier: classical circle packing tools break down in the presence of unbounded degrees. To overcome this, we establish a Refined Ring Lemma adapted to regular circle patterns.

\begin{lemma}[Refined Ring Lemma for RCP]
\label{lem:intro-refined-ring}
Let $(G,\Theta)$ be a tame ADT satisfying (Z1)--(Z4), and assume that (Z2) is strengthened to (Z2'). Let $r:V(G)\to(0,\infty)$ be the radius function of a regular circle-pattern realization of $(G, \Theta)$ in the plane. Then there exists a constant $C=C(\varepsilon_{0},\varepsilon_{1})>0$ such that, for every edge $u\sim v$,
\[
\frac{r(v)}{r(u)}>e^{-CS(u)}
\]
where $S(u)=\sum_{w\sim u}\deg_{G}(w)$ is the flower degree of $u$.
\end{lemma}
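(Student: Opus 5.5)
The approach follows the classical Rodin--Sullivan ring-lemma argument, made quantitative in the degree, with intersection angles taken into account. In a regular circle pattern realizing $(G,\Theta)$, adjacent circles $C_u,C_v$ meet at exterior angle $\Theta(uv)\in[0,\pi)$. Hence the three circles of a face $uvw$ determine a triangle of centers whose side lengths are fixed by the radii and angles: $|c_uc_v|^2=r_u^2+r_v^2+2r_ur_v\cos\Theta(uv)$. I will use the earlier conditions as follows. Condition (Z2') keeps $\Theta$ uniformly away from $\pi$, say $\Theta\le \pi-\varepsilon_0$, so adjacent circles genuinely overlap and never become tangent. Conditions (Z1), (Z3) and (Z4), together with tameness and a lower angle bound $\varepsilon_1$, keep each face triangle and its triple of circles nondegenerate; this is the regularity that excludes three circles through a common point.

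\textbf{Step 1: a one-step comparison.} Let $uvw$ be a face. I want to show that the angle of the center triangle at $c_u$ is bounded below by something of the form $c(\varepsilon_0,\varepsilon_1)\min(1,r_w/r_u)$, and symmetrically for the other pair. The estimate to aim for is a two-sided local statement. If $r_v/r_u$ is tiny, then the angle at $c_u$ of the center triangle is $O(r_v/r_u)$. Conversely, one cannot have both neighbors $v,w$ of $u$ in a face with $r_v,r_w\ll r_u$ unless the face angles are forced to be small. This is where the uniform separation from tangency and from the degenerate configuration enters.

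\textbf{Step 2: the flower around $u$.} The petals $w_1,\dots,w_d$, with $d=\deg u$, surround $C_u$, and the center angles at $c_u$ sum to $2\pi$. So some petal $w_j$ subtends an angle at least $2\pi/d$, and by Step 1 its radius satisfies $r_{w_j}\ge c\,r_u/d$. Next propagate from $w_j$ around the flower to $v$. Consecutive petals $w_i,w_{i+1}$ are adjacent, and both are adjacent to $u$. Applying the ring-lemma argument to the flower of $w_i$ should give a bound $r_{w_{i+1}}\ge e^{-C\deg(w_i)}r_{w_i}$. The mechanism is that if $r_{w_{i+1}}$ were much smaller, then the neighbors of $w_i$ lying near the small gap would have to fill an angle at $c_{w_i}$ using at most $\deg(w_i)$ circles whose sizes shrink geometrically. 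Concretely, each petal in a chain of circles nested in a cusp-like region can only be a bounded factor smaller than the previous one, thanks to the angle bounds. Walking at most $d/2$ steps around the flower and multiplying the factors gives
\[
r_v\ge \frac{c}{d}\,e^{-C\sum_{w\sim u}\deg(w)}\,r_u\ge e^{-C' S(u)}\,r_u,
\]
after absorbing the factor $c/d$ into the exponent, which is possible since $d\le S(u)$.

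\textbf{Main obstacle.} The hard part is the geometric-decay estimate in Step 2: the per-step loss must be exponential in the local degree and cannot depend on a global degree bound. For tangency packings this comes from the fact that a chain of circles pinched between two tangent circles decays at most geometrically. For overlapping circles I would first try to reduce to that case. The idea is to replace each $C_x$ by a concentric circle shrunk by a factor depending only on $\varepsilon_0$, so that adjacent shrunk circles become disjoint with comparable gaps. I would then compare the result to a tangency configuration using the explicit law-of-cosines formulas for the center triangles. Checking that the constants survive uniformly for angles near $0$, and near the hyperideal regime allowed by (Z3) and (Z4), is where I expect most of the work.
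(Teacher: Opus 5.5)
Your proposal has a genuine gap: its central step is not a step toward the lemma but a stronger form of the lemma itself. In Step~2 you assume that consecutive petals satisfy $r_{w_{i+1}}\ge e^{-C\deg(w_i)}r_{w_i}$. Every edge $xy$ of a triangulation is a pair of consecutive petals of the third vertex of a face containing it, so this claim says $r_y\ge e^{-C\deg(x)}r_x$ for every edge. Applied to $uv$, it gives $r_v\ge e^{-C\deg(u)}r_u$ immediately. That makes Step~1 and the walk around the flower unnecessary, and it beats the flower-degree bound. The paper's closing remark claims this degree-only form only under a uniform strengthening (Z3$'$) of (Z3). So you have reduced the statement to an unproved sharper version of itself.

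The mechanism you sketch for this claim (each circle in a cusp-like chain is only a bounded factor smaller than the previous one ``thanks to the angle bounds'') does not work with edgewise angle bounds. They do not stop several circles, for instance $C_u,C_{v_i},C_{v_j},C_w$ around a non-facial quadrilateral, from nearly passing through one point. That squeezes the circles inside the cycle with no quantitative control. Ruling this out is exactly the job of (Z2$'$), which you have misread:
\begin{itemize}
\item (Z2$'$) is the non-facial cycle condition $\sum_i\Theta(e_i)\le(s-2)\pi-\varepsilon_0$, not $\Theta\le\pi-\varepsilon_0$.
\item $\varepsilon_1$ is the tameness constant in $\Theta\le\pi-\varepsilon_1$; there is no lower angle bound.
\item $\Theta=0$ means tangency and is allowed. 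The upper bound on $\Theta$ rules out near-nesting, not tangency.
\end{itemize}
As a result, the hypothesis that actually drives the estimate never enters your argument.

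There are two smaller problems. First, Step~1 as written fails for overlapping circles. If $r_v\ll r_u$ but $r_w$ is not small, $C_v$ can sit near an intersection point of $C_u$ and $C_w$, and the angle at $c_u$ stays bounded below. So the bound $\max_{w\sim u}r_w\gtrsim r_u/d$ has to come from a separate argument, for example covering all directions at $c_u$ by the cones subtended by the petal disks. Second, shrinking circles concentrically to make them disjoint destroys the closing-up of the flower that ring-lemma arguments rely on, and no comparison theorem brings you back to a tangency packing.

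The paper never compares individual neighbouring radii. Its quantitative input is the polygon estimate of Lemma~\ref{lemma1}. If disks along a polygon satisfy $\sum_i(\pi-\Theta_i)>2\pi+\varepsilon_0$, then any set $E$ inside the polygon meeting all of them has $\operatorname{diam}(E)^{-1/2}<C\sum_i r_i^{-1/2}$. Condition (Z2$'$), with (Z3) when a closing edge is missing, is what puts the quadrilaterals $v_i,u,v_j,w_q$ in this regime.

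Iterating this estimate around the flower of a flanking petal gives Lemma~\ref{lemma3}. If a cyclic interval $I$ of petals of $u$ has tiny total radius $R$, then one flanking petal has radius at most $16C_3^{2\min(\deg v_a,\deg v_b)}R$. This is a statement about interval sums, much weaker than a per-edge ring bound.

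The argument then runs by contradiction. Assume $r_{v_i}\le C_0^{-S(u)}r_u$ and grow the interval $\{v_i\}$ one petal at a time, keeping $R\le C_0^{-S(u)+\sum_{v\in I}\deg v}\,r_u$. Each absorbed petal costs a factor $C_0^{\deg}$. After $d-2$ steps, all petals but one have total radius at most $C_0^{-2}r_u$, which contradicts Lemma~\ref{lemma2}. This is where $S(u)$ comes from, and an interval-type estimate of this kind is what you would need in place of a per-edge ratio bound.
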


This crucial estimate replaces uniform bounds with a quantitative exponential control depending on the local flower degree. Crucially, when passing to the logarithmic increments of the radii, this exponential bound translates to linear control via $S(u)$; this perfectly aligns with our third-moment assumption $\mathbb{E}[\deg_{G}(\rho)^{3}]<\infty$, providing precisely the necessary integrability to tame the hyperbolic increments of the random walk. Empowered by this new analytic tool, we adapt probabilistic boundary arguments to regular circle patterns with prescribed intersection angles \cite{angel2016unimodular,angel2018hyperbolic,IIP2026}.

\begin{corollary}[Boundary identification]
\label{thm:0.2}
Assume the unimodular ADT $(G,\rho,\Theta)$ is simple, one-ended, tame, and RCP-hyperbolic, and that (Z1), (Z3), (Z4), and (Z2') hold. Assume further that $\mathbb{E}[\deg_{G}(\rho)^{3}]<\infty$. Let $(X_{n})_{n\ge0}$ be the simple random walk on $G$ started from $\rho$, then almost surely:
\begin{enumerate}[(1)]
\item both $z(X_{n})$ and $z_{h}(X_{n})$ converge to a random limit $\Xi\in\partial\mathbb{D}$;
\item the law of $\Xi$ has full support on $\partial\mathbb{D}$ and has no atoms;
\item $\partial\mathbb{D}$, equipped with the resulting exit measure, realizes the Poisson boundary of $G$.
\end{enumerate}
\end{corollary}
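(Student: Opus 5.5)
We follow the Angel--Hutchcroft--Nachmias--Ray scheme for hyperbolic unimodular circle packings \cite{angel2016unimodular,angel2018hyperbolic}, as adapted to ideal patterns in \cite{IIP2026}. The Refined Ring Lemma (Lemma~\ref{lem:intro-refined-ring}) replaces the bounded-degree ring lemma throughout.

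\textbf{Setup.} Since the ADT is RCP-hyperbolic, we fix a regular circle-pattern realization in the unit disk $\mathbb{D}$ whose carrier is all of $\mathbb{D}$. This gives Euclidean centers $z(v)$, hyperbolic centers $z_h(v)$, and Euclidean radii $r(v)$. Hyperbolicity of the pattern and unimodularity, via the dichotomy of Theorem~\ref{thm:dichotomy} and Theorem~\ref{thm:dichotomya}, give three facts:
\begin{itemize}
\item $\mathbb{E}[L_\rho]<0$;
\item $G$ is nonamenable in the invariant sense;
\item the simple random walk is transient, with positive speed in the graph metric, by the usual stationary-environment argument for unimodular graphs with $\mathbb{E}[\deg]<\infty$ (after degree biasing).
\end{itemize}

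\textbf{Step 1: convergence of $z(X_n)$.} We first show that $r(X_n)\to0$. The Euclidean radii are summable along the walk in the sense needed for convergence if $\log r(X_n)$ decreases at linear rate. To get this we study the hyperbolic radii, or equivalently the increments $\log r(X_{n+1})-\log r(X_n)$. By Lemma~\ref{lem:intro-refined-ring}, each increment has absolute value at most $C\,S(X_n)$. The flower degree satisfies $S(\rho)\le\deg(\rho)\max_{w\sim\rho}\deg(w)$, and its first moment is controlled by the third-moment hypothesis $\mathbb{E}[\deg^3]<\infty$ via a mass transport. So the increments are integrable under the degree-biased stationary measure. Birkhoff's ergodic theorem for the environment seen from the walker then gives that $\frac1n\log r(X_n)$ converges almost surely to a constant $-\alpha$. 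We show $\alpha>0$ by the argument of \cite{angel2018hyperbolic}: $\alpha=0$ would make the hyperbolic displacement sublinear, which contradicts positive graph speed combined with the comparison between hyperbolic and graph distance (again a consequence of the Refined Ring Lemma). Exponential decay of $r(X_n)$ gives $\sum|z(X_{n+1})-z(X_n)|\le\sum (r(X_n)+r(X_{n+1}))<\infty$, so $z(X_n)\to\Xi$. Since the pattern fills $\mathbb{D}$ and the walk is transient, we get $\Xi\in\partial\mathbb{D}$. As $z_h$ and $z$ differ by $O(r)$ near the boundary, $z_h(X_n)\to\Xi$ as well.

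\textbf{Step 2: no atoms and full support.} We use the standard harmonic-measure argument. Suppose the exit law from some vertex had an atom, or missed an arc. We would first transport this property to the root by unimodularity and ergodicity. We would then apply a Möbius renormalization of the pattern, which keeps intersection angles and hence (Z1)--(Z4). The result would contradict the lemma that for every $\xi\in\partial\mathbb{D}$ and $\varepsilon>0$ the walk started near $\xi$ exits within $\varepsilon$ of $\xi$ with probability bounded below. Following \cite{angel2016unimodular}, that lemma comes from an electrical-resistance (VEL) estimate together with the bounded-intersection-angle geometry supplied by (Z3) and (Z4).

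\textbf{Step 3: Poisson boundary.} We follow \cite{angel2018hyperbolic}, using the entropy/conditioning criterion. It suffices to show that, for almost every $\xi$, the walk conditioned on $\Xi=\xi$ has a trivial tail. Equivalently, two independent walks conditioned on the same endpoint have trajectories whose hyperbolic neighbourhoods must intersect infinitely often. This requires a uniform crossing estimate: a walk passing near a geodesic toward $\xi$ hits a given annular cutset with probability bounded below. Uniformity holds because the radius ratios along the paths used are controlled by $e^{CS}$, and the ergodic theorem keeps $S$ of order one on a positive density of times.

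\textbf{Main obstacle.} We expect the hardest step to be this last uniformity. Unbounded degrees make the crossing estimates non-uniform, and the Refined Ring Lemma only gives exponential-in-$S$ control. First we would restrict to "good" times $n$ with $S(X_n)\le K$, which have positive density by ergodicity, and run the crossing argument only at those times.
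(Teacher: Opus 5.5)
Your proposal has a genuine gap in Step~1, and Steps~2 and~3 both depend on it.

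\textbf{Step~1.} In a fixed normalization of the pattern in $\mathbb{D}$, the increment $\log r(X_{n+1})-\log r(X_n)$ is not a function of the environment seen from $X_n$. Euclidean radius ratios are not invariant under automorphisms of $\mathbb{D}$, and your normalization is pinned at $\rho$. So these increments are not stationary under the degree-biased law, and Birkhoff's theorem does not apply. The refined Ring Lemma only dominates them by the stationary sequence $C(S(X_n)+S(X_{n+1}))$. That gives $\limsup_n\frac1n|\log r(X_n)|<\infty$, but neither a limit nor a sign. Hyperbolic radii are automorphism-invariant, but for exactly that reason $\log r_h(X_n)$ is stationary and carries no information about Euclidean decay, so it is not ``equivalent'' to the Euclidean increments. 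Kingman's theorem applied to the invariant quantity $d_h(z_h(X_m),z_h(X_n))$ would give a limiting hyperbolic speed, but not its positivity.

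Your positivity argument also runs the wrong way. The Ring Lemma bounds $d_h(z_h(u),z_h(v))$ \emph{above} by $C_1+C_2C(S(u)+S(v))$. Converting positive graph speed into linear hyperbolic displacement would need a \emph{lower} bound of hyperbolic distance by graph distance, and with unbounded degrees no such bound is available. The paper's stationary radius cocycle (normalizing $\Xi^+\mapsto 0$, $\Xi^-\mapsto\infty$, Lemma~\ref{lem:stationary-radius-cocycle}) exists only \emph{after} convergence and non-atomicity are known, so it cannot be used here.

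The missing idea is the decay mechanism the paper actually uses. It takes an invariant bounded-degree subgraph $\omega$ with positive Cheeger constant (Lemma~\ref{lem:nonamenable-core-boundary}) and gets exponential heat-kernel decay for the walk induced on $\omega$. Combined with the area bound $\#\{v:r(v)\ge t\}\le Ct^{-2}$ and Borel--Cantelli, this gives $r(X_{N_k})\le e^{-ak}$ at return times. The two-sided refined Ring Lemma then interpolates between return times. At that point only the graph-determined, hence stationary, quantities $S(X_j)$ need ergodic control.

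\textbf{Steps~2 and~3.} These rely on uniform estimates that are bounded-degree tools and are not available here. You need a walk started near $\xi$ to exit near $\xi$ with probability bounded below, and you need uniform crossing probabilities for annular cutsets. Restricting to times with $S(X_n)\le K$ does not restore them. Such estimates depend on the geometry of the whole cutset and the surrounding paths, not on the flower degree at the walker's current position.

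The paper avoids uniform estimates entirely:
\begin{itemize}
\item \emph{No atoms.} Lemma~\ref{lem:stationary-atom-dichotomy} (Lévy's $0$--$1$ law plus stationarity of the maximal atom mass) reduces to a single atom of full mass. Sending that atom to $\infty$ makes the pattern canonical up to $z\mapsto az+b$, so the center-triangle angles become local observables. A mass transport of these angles then forces $\mathbb{E}[\deg_G(\rho)]=6$, contradicting hyperbolicity.
\item \emph{Poisson boundary.} The paper uses the bi-infinite stationary walk. Hitting probabilities of the remote past tend to $0$. By stationarity and ergodicity, $\min\{p_L^n,p_R^n\}>1/3$ infinitely often. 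Planar separation by the trace then forces any witnessing path to be hit with probability at least $1/4$ infinitely often (Lemmas~\ref{lem:remote-past} and~\ref{lem:forward-hitting}). With the hitting estimate and Lévy's law, every invariant event is $\Xi$-measurable.
\end{itemize}
No degree bounds enter anywhere in these arguments.
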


We additionally prove that the random walk escapes to infinity at a positive linear rate.

\begin{corollary}[Positive speed]
\label{thm:positive-speed}
Under the same hypotheses as in Corollary \ref{thm:0.2}, almost surely,
\[
\lim_{n\to\infty}\frac{d_{h}(z_{h}(\rho), z_{h}(X_{n}))}{n} = \lim_{n\to\infty}\frac{-\log r(X_{n})}{n} = \lambda > 0.
\]
If $(G,\rho,\Theta)$ is ergodic, then this limit is almost surely constant.
\end{corollary}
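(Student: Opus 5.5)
We aim for the positive-speed statement (Corollary~\ref{thm:positive-speed}) by combining the boundary identification of Corollary~\ref{thm:0.2} with a Kingman subadditive ergodic argument for the increments of $-\log r(X_n)$. We work in the hyperbolic regime, with the regular circle pattern normalized in $\mathbb{D}$; $z_h(v)$ denotes the hyperbolic center of the circle of $v$, and $r(v)$ is its Euclidean radius.

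\textbf{Step 1: comparability of the two quantities.} We first show that $d_h(z_h(\rho),z_h(X_n)) = -\log r(X_n) + O(1)$. Here the $O(1)$ depends only on the root circle and on the local geometry at $X_n$, and it is controlled by the Refined Ring Lemma~\ref{lem:intro-refined-ring}. Since circles in $\mathbb{D}$ near the boundary have hyperbolic radius bounded away from $0$ and $\infty$ (by tameness and (Z2')), the Euclidean distance $1-|z(X_n)|$ is comparable to $r(X_n)$, up to a factor $e^{C S(X_n)}$. Because $\mathbb{E}[S(\rho)]\le \mathbb{E}[\deg(\rho)^2]<\infty$ by unimodularity and stationarity of the walk (under the degree-biased law), Borel--Cantelli gives $S(X_n)/n\to 0$ almost surely. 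Hence both quantities in the statement have the same limit, if either limit exists.

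\textbf{Step 2: existence of the limit.} The increments $\log r(X_{n+1})-\log r(X_n)$ form a stationary sequence under the degree-biased reversible law. This uses the standard fact that $(G,\rho,\Theta)$ biased by $\deg(\rho)$ is stationary for the walk, and that the increments are functions of the environment seen from the walker: ratios of radii are invariant under the M\"obius renormalization of the pattern. By Lemma~\ref{lem:intro-refined-ring}, each increment satisfies $|\log r(X_{n+1})/r(X_n)|\le C\,S(X_n)$. The assumption $\mathbb{E}[\deg^3]<\infty$ makes $\deg(\rho)S(\rho)$ integrable, so the increments are integrable under the biased measure. We then treat $-\log r(X_n)$ as an additive cocycle and apply Birkhoff's ergodic theorem. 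In fact, the hyperbolic distance is subadditive along the environment, so Kingman's theorem yields a limit $\lambda$, and $\lambda$ is constant in the ergodic case. We transfer from the biased to the unbiased law by absolute continuity.

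\textbf{Step 3: positivity, the main obstacle.} We expect $\lambda>0$ to be the main obstacle. The approach is to argue by contradiction using Corollary~\ref{thm:0.2}. If $\lambda=0$, then the walk has sublinear hyperbolic displacement. Combined with a finite-entropy bound (degree moments give $H(X_1)<\infty$), a Kaimanovich--Vershik-type entropy criterion would then force the Poisson boundary to be trivial. This contradicts the nontrivial, non-atomic exit measure on $\partial\mathbb{D}$ from Corollary~\ref{thm:0.2}.

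Concretely, we would follow the argument of \cite{angel2018hyperbolic} for circle packings. The exit law being non-atomic with full support implies that harmonic measure from $\rho$ is spread on scale $O(1)$, while from $X_n$ it concentrates in an arc of length $\asymp r(X_n)$. Mutual singularity of these measures as $n\to\infty$ then yields positive entropy $h>0$, and the fundamental inequality $h\le \ell\, v$ gives linear growth $\ell>0$. The delicate part is to bound the volume growth $v$ relative to hyperbolic distance without bounded degree. Here we would again use the linear-in-$S(u)$ control from Lemma~\ref{lem:intro-refined-ring} together with the third-moment assumption, replacing graph balls by hyperbolic balls in the realization.
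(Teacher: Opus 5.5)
The key step, $\lambda>0$, is not actually proved. Your entropy route needs the fundamental inequality $h\le \ell v$ with a \emph{finite} exponential growth rate $v$ for the metric in which $\ell$ is measured, and you leave exactly this open.

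\begin{itemize}
\item In the graph metric, unbounded degrees give no such bound.
\item In the hyperbolic metric, nothing in the hypotheses bounds the number of circles in a hyperbolic ball of radius $R$ by $e^{CR}$. Tameness, (Z2$'$) and the refined ring lemma only compare \emph{neighbouring} radii and give no lower bound on hyperbolic radii, so small circles escape any hyperbolic area count.
\end{itemize}

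Appealing to the flower-degree bound plus the third moment does not address this. The claim that harmonic measure from $X_n$ sits on an arc of length $\asymp r(X_n)$ is also unproved. You would further need an entropy theory for stationary random graphs with unbounded degrees, not the group-case Kaimanovich--Vershik criterion.

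The missing idea is the one the paper uses to get exponential decay of radii directly:
\begin{itemize}
\item By Theorem~\ref{thm:dichotomya} the ADT is invariantly non-amenable, so it contains an invariant bounded-degree subset $\omega$ with positive Cheeger constant.
\item The walk induced on $\omega$ has heat kernel at most $Ce^{-ck}$ (Lemma~\ref{lem:induced-heat-kernel}).
\item Combine this with the \emph{Euclidean} area bound $\#\{v:r(v)\ge t\}\le Ct^{-2}$. It counts only large circles, so no volume-growth control is needed. Borel--Cantelli then gives $r(X_{N_k})\le e^{-ak}$ at return times.
\item Positive density of returns plus ring-lemma control of the excursions gives $\limsup_n\frac1n\log r(X_n)<0$ (Lemma~\ref{lem:exponential-radius-decay}).
\end{itemize}
Once the limit exists, $\lambda>0$ is immediate. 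Counting the sets $\{v:r(v)\ge e^{-\epsilon n}\}$ with this same area bound, instead of hyperbolic balls, is also the natural way to try to rescue your entropy route.

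Steps 1 and 2 also contain errors.

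\textbf{Step 2.} Ratios of Euclidean radii are \emph{not} invariant under automorphisms of $\D$. So $\log r(X_{n+1})-\log r(X_n)$, computed in a fixed disk normalization, is not a function of the environment seen from $X_n$, is not stationary, and Birkhoff does not apply. The paper's fix is to map $\D\to\mathbb H$ with $\Xi^+\mapsto 0$ and $\Xi^-\mapsto\infty$, which is possible since $\Xi^+\neq\Xi^-$. Only scaling then remains, the ratios $\widehat r(X_n)/\widehat r(X_{n-1})$ are stationary and integrable, and bounded distortion of the map near $\Xi^+$ transfers the limit back to $r$.

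\textbf{Kingman and Step 1.} Your Kingman alternative for $d_h(z_h(X_0),z_h(X_n))$ is sound. But passing from it to $-\log r$ requires $-\log r(X_n)\le d_h(z_h(\rho),z_h(X_n))+o(n)$, which is a subexponential \emph{lower} bound on the hyperbolic radius of $X_n$. Your claim that hyperbolic radii are bounded away from $0$, or are controlled by $e^{CS(X_n)}$, is unjustified. A counterexample: take a circle packing ($\Theta\equiv 0$, which meets all the hypotheses). Deep inside a large degree-six patch, $S\equiv 36$, yet by the hexagonal packing lemma the hyperbolic radius tends to $0$ as the patch grows.

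The paper avoids this by going from $r$ to $d_h$ rather than the reverse. It uses $1-|z_h(X_n)|\le r(X_n)+2\sum_{j\ge n}r(X_j)$ together with the already-established exponential rate of $r(X_n)$. The opposite direction needs only an upper bound on hyperbolic radii.
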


Together with the corresponding ideal theory, these results provide a framework connecting local dihedral geometry, global conformal type, and the asymptotic stochastic behavior of unimodular random infinite hyperbolic polyhedra.

The framework developed here suggests a broader program on random geometry in hyperbolic three-space. Natural directions include polyhedral models induced by Weil-Petersson-type measures on moduli spaces, scaling limits related to Brownian surfaces, quantitative questions concerning volume growth, isoperimetry, and spectral behavior, and extensions from trivalent polyhedra to higher-valence or more general non-ideal hyperbolic polyhedral complexes. In these settings, suitable intrinsic curvature or characteristic quantities may provide a bridge between local three-dimensional geometry and global geometric or probabilistic behavior.

\subsection{Organization}
Section 2 reviews the necessary background on angled disk triangulations, regular circle patterns, trivalent hyperbolic polyhedra, unimodularity, vertex extremal length, and the angle conditions (Z1)--(Z4) and (Z2'). Section 3 proves the unimodular Gauss-Bonnet formula and the resulting parabolic-hyperbolic dichotomy. Section 4 studies Benjamini-Schramm limits of finite trivalent hyperbolic polyhedra and proves that every admissible polyhedral local limit is parabolic; in particular, hyperbolic unimodular THPs are not polyhedrally sofic. Section 5 establishes the refined Ring Lemma for regular circle patterns. Section 6 applies this estimate to identify the Poisson boundary and prove positive speed for the face random walk on a unimodular hyperbolic THP. Section 7 discusses intrinsic boundary questions, variational interpretations of the geometric characteristic number, and possible extensions to higher-valence polyhedra.

\subsection*{Acknowledgements.}
The authors would like to thank Puchun Zhou and Longsong Jia for helpful discussions. The first three authors are sincerely grateful to Professor Xin Sun and Professor Jian Ding for their valuable suggestions, comments, and helpful conversations. The fourth author is especially grateful to Professor Gang Tian for his continuous support and encouragement. The first author is supported by NSFC, no.12341102, no.12122119, and no.12525103. 

\section{Preliminaries}
\label{sec:prelim}

In this section we collect the geometric, combinatorial, and probabilistic
ingredients required for this paper. 

\subsection{Angled disk triangulations and regular circle pattern} 

We recall the definition of ADT and tame ADT defined in the introduction:

\begin{definition}[ADT]
    A \emph{disk triangulation} is a triple $(V, E, F)$ that is a locally finite triangulation of the unit disk $\D$. A \emph{disk triangulation graph} is a pair $(V, E)$ that can be realized by some disk triangulation. An \emph{angled disk triangulation graph} is a pair $(G, \Theta)$ where $G=(V,E)$ is a disk triangulation graph and $\Theta:E\to[0,\pi)$ assigns an angle to each edge. 
\end{definition}

An ADT $(G,\Theta)$ is \emph{tame} if there exists $\eps>0$ such that
\[
\Theta(e)\in[0,\pi-\eps]\qquad\text{for all }e\in E.
\]
For a unimodular random rooted ADT, we say it is tame if there exists a deterministic $\eps>0$ that works almost surely (equivalently, $\eps$ can be chosen uniformly on the support of the law).

\begin{definition}[Angle conditions (Z1)--(Z4)\cite{GeRCP}]
Let $(G,\Theta)$ be an ADT. When needed, the following conditions are imposed
\begin{enumerate}[(Z1)]
\item If $e_1,e_2,e_3$ are the three edges forming the boundary of a triangular face and
\[\Theta(e_1)+\Theta(e_2)+\Theta(e_3)>\pi,\]
then for every permutation $\{i,j,k\}=\{1,2,3\}$ we have
\[\Theta(e_i)+\Theta(e_j)<\pi+\Theta(e_k).\]

\item If $e_1,e_2,\dots,e_s$ form a simple closed curve in $G$ that is not the boundary of any triangular face, then
\[\sum_{i=1}^s \Theta(e_i) < (s-2)\pi.\]

\item If $e_1$ and $e_2$ are \emph{homologically non-adjacent} edges in $G$, then
\[\Theta(e_1)+\Theta(e_2)<\pi.\]
where an arc $\Gamma$ formed by two adjacent edges in $E$ is called homologically adjacent if there is an edge in $E$ that connects the two endpoints of $\Gamma$. Conversely, we call it homologically non-adjacent.

\item If $e_1,e_2,e_3$ are the three edges forming the boundary of a triangular face, then for every permutation $\{i,j,k\}=\{1,2,3\}$ we have
\[\cos\Theta(e_i)+\cos\Theta(e_j)\cos\Theta(e_k)\ge 0.\]
\end{enumerate}
\end{definition}

The strengthened condition $(Z2')$ is the following uniform version of
$(Z2)$: there exists $\eps_0=\eps_0(G,\Theta)>0$ such that, for every
simple closed curve $e_1,\dots,e_s$ in $G$ that is not the boundary of a
triangular face,
\[
\sum_{i=1}^s \Theta(e_i) \le (s-2)\pi-\eps_0.
\]

Circle patterns provide a discrete conformal realization of ADTs.
The Koebe-Andreev-Thurston circle packing theorem and its refinements
show that many planar triangulations can be realized by configurations of
circles.  For ADTs, regular circle patterns (RCP) with
prescribed intersection angles provide the corresponding realization
theory, together with existence and rigidity results
\cite{BeardonStephenson1990,BeardonStephenson1991,GeRCP,He1996,He1999,HeSchramm1993,HeSchramm1995,
Koebe1936,Liu2025Obtuse,RodinSullivan1987,Thurston1979}.
Variational approaches also give powerful methods for studying circle
patterns with prescribed intersection angles
\cite{BobenkoSpringborn2004,Bragger1992,ColinDeVerdiere1991,
Springborn2003,Stephenson2005}.

\begin{definition}[RCP]
A circle pattern
\(
\mathcal{P}=\{C_v\}_{v\in V}
\)
is called an RCP, if
its carrier graph
\(
G(\mathcal{P})=(V,E)
\)
is a disk triangulation graph.
\end{definition}

The following theorems of Ge-Jia-Yu-Zhou answered when an ADT has a regular circle pattern realization, and whether it is rigid: 

\begin{theorem}[Existence \cite{GeRCP}]
\label{infinite_existence}
Let \((G,\Theta)\) be an ADT satisfying
\[
    \sup_{e\in E}\Theta(e)<\pi.
\]
If conditions \emph{(Z1)--(Z3)} hold, then there exists an RCP
realizing \((G,\Theta)\). If conditions \emph{(Z2)} and \emph{(Z4)}
hold, then there exists an embedded circle pattern \(P\)
weakly realizing \((G,\Theta)\); that is, \(G\) is a subgraph of the
contact graph \(G(P)\).
\end{theorem}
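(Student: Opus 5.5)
The plan is the standard exhaustion-and-compactness scheme used for infinite circle packings (He--Schramm, Rodin--Sullivan), adapted to prescribed intersection angles.

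\emph{Exhaustion and finite patterns.} Fix a root $\rho$ and choose an exhaustion $G_1\subset G_2\subset\cdots$ of $G$ by finite simply connected triangulated subdisks with $\bigcup_n G_n=G$. For each $n$ we solve the finite problem: find radii $r_n:V(G_n)\to(0,\infty)$ for which the circles with centres laid out along $G_n$ meet at the prescribed angles $\Theta$, the cone angle at every interior vertex equals $2\pi$, and the boundary circles satisfy a boundary normalization (for instance, they are internally tangent to, or orthogonal to, $\partial\mathbb D$, i.e.\ have infinite hyperbolic radius).

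For the finite problem I would use the Thurston/Colin de Verdi\`ere/Bobenko--Springborn variational principle. Each triangle with prescribed angles $\Theta(e_i)<\pi$ defines, as a function of the three radii, a triangle of centres. Condition (Z1) ensures this triangle is nondegenerate for all radii, and gives the monotonicity of its angles in the radii. The functional whose gradient is the cone-angle defect is then strictly convex. Condition (Z2), applied to simple closed curves inside $G_n$ that do not bound a face, is exactly Thurston's coercivity condition. It forbids the angle sum along a cycle from degenerating, so the functional is proper and attains a unique critical point. The condition $\sup\Theta<\pi$ (tameness) keeps every estimate uniform in $n$.

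\emph{Compactness.} Normalize $r_n$ so that the circle of $\rho$ is centred at $0$ with radius $1$ (or hyperbolic radius fixed). The key step is a uniform ratio bound: for every vertex $v$ and every edge $u\sim v$, $r_n(u)/r_n(v)$ should stay in a compact subset of $(0,\infty)$ independently of $n$, once $n$ is large enough that the flower of $v$ lies in the interior of $G_n$. This is a ring lemma for patterns with angles, and I expect it to be the main obstacle.

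To prove it I would argue by contradiction, following Rodin--Sullivan. Suppose a petal radius tends to $0$ relative to the centre circle. Then some chain of petals must close up around $v$ with vanishing size. In the limit this forces either a non-facial cycle whose angle sum reaches $(s-2)\pi$, contradicting (Z2), or two homologically non-adjacent edges with $\Theta(e_1)+\Theta(e_2)\ge\pi$, contradicting (Z3). Condition (Z3) is what prevents circles that are not neighbours in $G$ from being forced to overlap as radii degenerate. Given the ratio bound, a diagonal argument along the exhaustion produces a subsequence in which radii and centres converge at every vertex.

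\emph{Limit and weak realization.} Positivity and finiteness of the limiting radii follow from the ratio bound. The intersection angles and the local flower structure pass to the limit, and (Z1) keeps the faces nondegenerate, so the limit is a locally finite circle pattern with carrier $G$. Its carrier is a disk triangulation, being an increasing union of embedded triangulated disks; this gives an RCP realizing $(G,\Theta)$.

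For the second assertion, (Z3) is dropped and replaced by (Z4). The inequality $\cos\Theta_i+\cos\Theta_j\cos\Theta_k\ge0$ ensures that in each triangle of centres the foot of the radical centre lies inside the triangle, so the interstices are genuine and the layout is locally embedded. The same exhaustion and variational steps apply with (Z2). Without (Z3), however, two circles not adjacent in $G$ may become tangent or overlap in the limit. The limiting configuration is then only an embedded pattern whose contact graph $G(P)$ contains $G$, which is exactly weak realization.
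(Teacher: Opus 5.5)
The paper does not prove this theorem. It is quoted from \cite{GeRCP}, and the only piece of that argument visible here is the local ring lemma, Lemma~\ref{ringlemma}. Your outline (exhaustion, finite patterns, ring lemma, diagonal limit) is the natural scheme, but it has a genuine gap exactly where the two assertions differ: you never show that your patterns are embedded with contact graph equal to $G$. Solving the finite cone-angle problem only yields an immersed configuration, and injectivity of the developed layout needs its own argument. Even an embedded layout of centre triangles does not keep disks of non-adjacent vertices apart when angles are obtuse. That is the job of (Z3), and it is a local condition, not a limiting one. Take $u,w\sim v$ non-adjacent, with $C_u,C_w$ large compared with $C_v$ and on opposite sides of it. Then they are close to the half-planes $\{x\cdot n\ge r_v\cos\Theta(vu)\}$ and $\{x\cdot n\le -r_v\cos\Theta(vw)\}$, which overlap as soon as $\Theta(vu)+\Theta(vw)>\pi$. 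So extra contacts can already occur in a single finite pattern, contrary to the explanation in your last paragraph. For the first assertion you need a regularity statement: under (Z1)--(Z3), disks of non-adjacent vertices in an embedded pattern weakly realizing $(G,\Theta)$ are disjoint. It must also apply to the limit, so that no new tangencies appear there. Saying the carrier is an increasing union of embedded triangulated disks does not supply this, since being an RCP concerns the contact graph, not just the carrier.

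Instead you spend (Z3) inside your ring-lemma sketch, and this creates a second problem. In the weak-realization assertion (Z3) is not assumed, so one of the two contradictions your sketch relies on is unavailable. Your finite step there also used (Z1), which is not a hypothesis of that assertion, so ``the same exhaustion and variational steps apply'' is unsupported. Both points are repaired differently from what you wrote.
\begin{itemize}
\item The cited ring lemma uses only (Z1), (Z2) and the existence of an embedded weakly realizing pattern. It needs $B(i,2\pi/\epsilon)\cap\partial V=\emptyset$ rather than merely an interior flower, which is harmless along an exhaustion.
\item (Z4) implies (Z1) facewise. If $\Theta_1+\Theta_2+\Theta_3>\pi$ and $\Theta_i+\Theta_j\ge\pi+\Theta_k$, a short case check ($\Theta_k=0$, $0<\Theta_k\le\pi/2$, $\Theta_k>\pi/2$) gives $\cos\Theta_i+\cos\Theta_j\cos\Theta_k<0$.
\end{itemize}
With this, nondegeneracy, your finite step and the (Z1)--(Z2) ring lemma all go through in the second assertion, and the limit is an embedded pattern weakly realizing $(G,\Theta)$. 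Your ``radical centre'' reading of (Z4) is unjustified and is not what makes the argument work. Finally, the finite existence claim itself, strict convexity under (Z1) and properness from (Z2) with horocyclic boundary data, is asserted rather than proved. For obtuse angles that is a genuine theorem, not a routine transcription of Thurston's argument.
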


\begin{theorem}[Rigidity \cite{GeRCP}]\label{thm-intro-rigidity-cp}
	Let $(G,\Theta)$ be an ADT that satisfies ($Z_2$) and ($Z_4$). Let $\mathcal{P}$ and $\mathcal{P}'$ be two RCPs that realize $(G,\Theta)$. 
\begin{itemize}
\item [(1)] If $\pac$ and $\pac'$ are locally finite in $\mathbb{D}$, then there is a M\"obius transformation $h$ such that $\mathcal{P}'=h(\mathcal{P})$.\\[-10pt]
\item [(2)]If $\pac$ is locally finite in $\mathbb{C}$ and $\sup_{e\in E}\Theta\leq \pi-\epsilon$, then there is a Euclidean similarity $h$ such that $\mathcal{P}'=h(\mathcal{P}).$
\end{itemize}
\end{theorem}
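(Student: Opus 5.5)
The plan is to follow the He--Schramm strategy for rigidity of infinite circle packings, adapted to prescribed intersection angles. Condition (Z4) is what makes this adaptation possible.

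\textbf{Local monotonicity.} For a triangular face $\{u,v,w\}$ of $G$ with edge angles $\Theta$, the three circles are pairwise at the prescribed angles. Their centers therefore form a triangle whose side lengths are
\[
\ell_{uv}^2=r_u^2+r_v^2+2r_ur_v\cos\Theta(uv)
\]
(Euclidean case), or the hyperbolic analogue. The first step is to show that (Z4) is exactly what makes the angle $\alpha_v$ at the center of $C_v$ nonincreasing in $r_v$ and nondecreasing in $r_u,r_w$. It is strictly monotone unless all three radii scale together in the Euclidean case. This is a direct derivative computation, and the sign condition $\cos\Theta_i+\cos\Theta_j\cos\Theta_k\ge0$ appears as the numerator.

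(Z2) will be used to exclude degenerate flowers. It guarantees that the only way a closed curve of circles can close up with total angle $2\pi$ at each center is through face boundaries. Hence any RCP realizing $(G,\Theta)$ has the cone-angle equations $\sum_{f\ni v}\alpha_v^f=2\pi$ at every vertex and nothing else, and these are the equations the comparison steps use.

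\textbf{Part (1): the disk case.} Let $\mathcal P$ and $\mathcal P'$ be locally finite in $\mathbb D$, and measure radii in the hyperbolic metric. Local finiteness in $\mathbb D$ means that hyperbolic radii tend to infinity along any sequence leaving every compact set, since the circles accumulate only at $\partial\mathbb D$. I would prove a discrete Schwarz--Pick lemma: $r'_h\le r_h$ for every vertex.

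The argument is a maximum principle. Suppose $\sup_v r'_h(v)/r_h(v)>1$. Compare $\mathcal P'$ with a slightly shrunk copy of $\mathcal P$, obtained by conjugating with a contraction $z\mapsto tz$, $t<1$, so that the ratio attains a strict maximum on a finite set. At a vertex where the maximum is attained, monotonicity forces the angle sum in $\mathcal P'$ to be strictly less than $2\pi$, which is a contradiction. Letting $t\to1$ gives $r'_h\le r_h$, and by symmetry the two radius functions are equal. Equal hyperbolic radii together with identical combinatorics and angles force congruent flowers, so the patterns differ by a hyperbolic isometry, that is, a Möbius automorphism of $\mathbb D$.

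\textbf{Part (2): the plane case.} This is the main obstacle, because there is no ambient compactness forcing a maximum principle. I would use He--Schramm's fixed-point index argument for intersecting circles. Normalize by a similarity so that $C_\rho=C'_\rho$, and suppose the patterns are not equal.

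First, from local finiteness in $\mathbb C$ and the tameness bound $\Theta\le\pi-\epsilon$, obtain a ring-lemma-type statement: the intersection of adjacent disks contains a definite proportion of each. From this one gets VEL-parabolicity of $G$, since the pattern fills $\mathbb C$ and vertex extremal length can be compared with conformal modulus of annuli.

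Second, build a homeomorphism between the carriers that sends each disk to the corresponding disk and each interstice or triangle to its counterpart. Applying the index lemma to large combinatorial cycles shows that the number of fixed points is controlled. Parabolicity then yields a sequence of separating cycles of vanishing extremal length. On these cycles $r'/r$ must be nearly constant, because otherwise the index count would be violated.

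Together these force $r'/r$ to be constant. By the equality case of local monotonicity, $\mathcal P'$ is then a similarity image of $\mathcal P$. The delicate point is ensuring that the index lemma holds for circles intersecting at angles up to $\pi-\epsilon$. I expect this to need tameness and (Z4) to prevent one disk from swallowing a neighbor's center.
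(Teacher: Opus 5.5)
The paper gives no proof of this theorem; it is quoted from \cite{GeRCP}. Judged on its own, your proposal has a genuine gap in each part.

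\textbf{Part (1).} Your premise that local finiteness in $\mathbb D$ forces hyperbolic radii to tend to infinity is false. A pattern invariant under a cocompact Fuchsian group, for instance the packing of the $7$-regular triangulation with $\Theta\equiv 0$, satisfies (Z2) and (Z4), is locally finite in $\mathbb D$, and has constant hyperbolic radii. Only the Euclidean radii are forced to $0$, so nothing makes your supremum attained.

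Worse, you shrink the wrong pattern. The hyperbolic radii of $t\mathcal P$ tend to $0$ at infinity, so $r'_h/r_h(t\mathcal P)$ blows up there instead of peaking on a finite set. If you shrink $\mathcal P'$ instead, you get $r_h(t\mathcal P',v)\to 0$. But then you need $r_h(\mathcal P,v)$ not to decay faster, and with unbounded degrees there is no such lower bound.

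The ratio maximum principle is only sound on finite complexes, where the boundary radii can be taken infinite (horocyclic). Passing from there to an arbitrary pattern locally finite in $\mathbb D$ is the actual content of (1), and it is not a formality here. For $\cos\Theta<0$ the center distance $\ell^2=r_u^2+r_v^2+2r_ur_v\cos\Theta$ (and its hyperbolic analogue) is not monotone in the radii. So the classical argument that larger radii give a distance-expanding simplicial map does not transfer to obtuse angles.

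\textbf{Part (2).} Your ``ring-lemma-type'' claim that adjacent disks overlap in a definite proportion of each is false. For $\Theta=0$ the disks are tangent and meet in a single point. Without a degree bound, adjacent radii are not even comparable. The paper's own refined ring lemma only gives $e^{-CS(u)}$, and it needs (Z1), (Z3) and (Z2$'$), none of which are assumed here. VEL-parabolicity of $G$ has to come instead from an extremal-length comparison in the style of He--Schramm, which needs no degree bound.

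After that the sketch is heuristic. ``The index count forces $r'/r$ to be nearly constant on separating cycles'' is not an argument. What you defer (``I expect this to need tameness and (Z4)'') is the heart of the theorem. It consists of:
\begin{itemize}
\item an index lemma for pairs of disk configurations meeting at exterior angles up to $\pi-\epsilon$, where a disk may contain a neighbor's center or nearly coincide with it;
\item the fixed-point count over a combinatorial disk that turns this lemma into a contradiction.
\end{itemize}

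Note also that $\mathcal P'$ is not assumed locally finite in $\mathbb C$. Ruling out that its carrier is a proper subdomain is part of what that argument must deliver, so you cannot treat the two patterns symmetrically.
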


By Theorems \ref{infinite_existence} and \ref{thm-intro-rigidity-cp}, for a tame ADT $(G,\Theta)$ that satisfies \((Z1)\)-\((Z4)\), there is always an RCP realizing $(G,\Theta)$. In addition, with respect to the circle pattern being in the open unit disk $\D$ or the complex plane $\C$, the circle pattern is rigid up to the corresponding transformations.

A basic feature of an embedded \emph{RCP} is the comparability of radii of neighboring circles. The local Ring Lemma gives such a comparison with a constant depending on the local configuration.

\begin{lemma}[Ring lemma \cite{GeRCP}]\label{ringlemma}
Let $\mathcal{T}= (V, E, F)$ be a \emph{finite} triangulation of the closed disk $\overline{\mathbb{D}}$, and let $\Theta\in[0 ,\pi)^E$ be an angle function. Assume conditions ($Z_1$) and ($Z_2$) hold, and there exists an embedded circle pattern $\pac = \{C_i\}_{i \in V}$ realizing $(G, \Theta)$ weakly. Take a constant $\epsilon>0$ so that $\Theta([v_i,v_j])\leq\pi-\epsilon$ for all $[i,j]\in E$ (since $\mathcal{T}$ is finite, such $\epsilon$ can always be taken). Then for each vertex $i\in V$ with $B(i, \frac{2\pi}{\epsilon})\cap \partial V= \emptyset$, there exists a constant $C= C(G, \Theta)>0$ such that
\begin{equation}
	\frac{r_j}{r_i}\geq C, \quad \forall j \sim i.
\end{equation}
\end{lemma}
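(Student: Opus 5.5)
The natural route is a compactness-and-contradiction argument of Rodin--Sullivan type, adapted to prescribed intersection angles. Since $\mathcal{T}$ is finite, the constant may depend on $(G,\Theta)$. The content of the lemma is therefore uniformity over all embedded circle patterns weakly realizing $(G,\Theta)$.

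\textbf{Setup.} Fix an interior vertex $i$ with $B(i,\frac{2\pi}{\epsilon})\cap\partial V=\emptyset$. Suppose there is a sequence of embedded patterns $\pac^{(n)}$ weakly realizing $(G,\Theta)$ and a neighbor $j\sim i$ with $r^{(n)}_j/r^{(n)}_i\to 0$. Since $G$ is finite we may fix one such $j$. After a Euclidean similarity we normalize $C^{(n)}_i$ to be the unit circle centered at $0$. We then pass to a subsequence along which every circle $C^{(n)}_k$ converges in the Hausdorff sense on the Riemann sphere, to a circle, a line, or a point. Write $D$ for the set of vertices whose circles collapse to a point. Then $j\in D$, and since $C^{(n)}_j$ meets $C_i$ at angle $\Theta([i,j])\le\pi-\epsilon$, its limit is a point $p\in C_i$. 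Let $D_p$ be the connected component of $\{k\in D: C^{(n)}_k\to p\}$ containing $j$.

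\textbf{Step 1: the collapsing cluster stays inside the ball.} At the point $p$, each circle not in $D_p$ that is adjacent to $D_p$ must pass through $p$ in the limit. Each such circle occupies a sector at $p$ of opening at least $\epsilon$, because the angle bound $\Theta\le\pi-\epsilon$ prevents two adjacent limit circles through $p$ from being tangent in the degenerate "same side'' way. I would make this precise by blowing up at $p$ by the scale $r^{(n)}_j$: in the rescaled limit the circles of $D_p$ form a nondegenerate configuration, while the non-collapsing neighbors become lines or half-planes through the origin, and consecutive ones subtend angle at least $\epsilon$. A combinatorial path leaving $D_p$ from $j$ can therefore cross only boundedly many such layers, at most $2\pi/\epsilon$, before the angles around $p$ are exhausted. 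Hence $D_p\subset B(i,\frac{2\pi}{\epsilon})$. In particular $D_p$ contains no boundary vertex, so its outer boundary in $G$ is a simple closed curve $\gamma=e_1\cdots e_s$ made of edges joining non-collapsing vertices.

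\textbf{Step 2: contradiction with the angle conditions.} All circles along $\gamma$ pass through $p$ in the limit, and they enclose the collapsed region $D_p$. I would compute, in the blown-up picture, the angles at $p$ between consecutive limit circles. These are determined by the intersection angles $\Theta(e_k)$, and the turning angles around the enclosed region must add up to $2\pi$. This should yield $\sum_k\Theta(e_k)\ge (s-2)\pi$.
\begin{itemize}
\item If $\gamma$ is not the boundary of a triangular face, this contradicts (Z2).
\item If $\gamma$ is the boundary of a face, then $s=3$ and the three circles pass through a common point. In that case either the angle sum is at most $\pi$, with the triple-point configuration forcing equality, or (Z1) rules the configuration out, as in the finite circle-pattern theory of \cite{GeRCP}.
\end{itemize}
Either way we reach a contradiction, so $r_j/r_i$ is bounded below by some $C=C(G,\Theta)>0$.

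\textbf{Main obstacle.} The hardest step is Step 1 together with the angle bookkeeping at $p$ in Step 2. One must rule out that limit circles through $p$ overlap in a way that makes the sector count fail, for example when several limit circles through $p$ are tangent to each other. I would handle this with the blow-up normalization and an inductive count along a shortest path from $j$ out of $D_p$, using $\Theta\le\pi-\epsilon$ at each step to gain angle $\epsilon$.
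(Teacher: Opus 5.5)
The paper does not prove this lemma; it quotes it from its reference on regular circle patterns, so your argument has to be judged on its own. The overall contradiction scheme is reasonable: normalize $C_i$, take limits, isolate the cluster $D_p$ collapsing to $p\in C_i$, surround it by non-degenerate circles through $p$, and contradict (Z2). But Step 1 has a genuine gap.

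The angular budget at $p$ bounds how many non-collapsing circles sit \emph{around} $D_p$, i.e.\ the length of its outer boundary. It says nothing about how far $D_p$ extends in $G$. A path inside $D_p$ never meets a non-collapsing circle and uses no angle at $p$. There are no nested ``layers'' to cross, so $D_p\subset B(i,\frac{2\pi}{\epsilon})$ does not follow.

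The blow-up is also not as you describe. Rescaling by $r^{(n)}_j$ need not turn $D_p$ into a nondegenerate configuration, since $D_p$ may contain circles at many different scales. Non-collapsing circles far from $C_j$ at that scale escape to infinity rather than becoming lines through the origin.

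In Step 2, ``the turning angles add up to $2\pi$'' is likewise unjustified. A pure turning count allows consecutive circles of $\gamma$ to turn back and forth around $p$. That would only give $\sum(\pi-\Theta(e_k))\ge 2\pi$, which is consistent with (Z2).

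The missing idea is to count the \emph{faces} between the outer boundary and $D_p$; this does both steps at once.
\begin{itemize}
\item The centers of $D_p$ tend to $p$ and $|p|=1$, so $D_p$ cannot enclose $i$. Hence $i$ lies on the outer vertex boundary of $D_p$ on its side.
\item Every edge $e=[c,c']$ of that boundary lies in a face $[c,c',w]$ with $w\in D_p$. A third vertex in a hole of $V\setminus D_p$ would be adjacent to $c$, which is impossible.
\item $C_c$ and $C_{c'}$ converge to non-degenerate circles through $p$ meeting at angle $\Theta(e)$. By the law of cosines, the angle of this face at $w$ tends to $\pi-\Theta(e)\ge\epsilon$.
\item These faces have disjoint interiors in the embedded straight-line triangulation of centers and share the limit vertex $p$. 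Hence $\sum_e(\pi-\Theta(e))\le 2\pi$, so the outer boundary has at most $2\pi/\epsilon$ edges.
\end{itemize}

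This is what the hypothesis on $B(i,\frac{2\pi}{\epsilon})$ is for. If $D_p$ met $\partial V$, its outer boundary would be an arc through $i$ ending on $\partial V$ with at most $2\pi/\epsilon$ edges, which is excluded. So $D_p$ consists of interior vertices. A simple cycle $\gamma$ in its outer boundary then surrounds it, and the same inequality gives $\sum_{e\in\gamma}\Theta(e)\ge(s-2)\pi$. Since $\gamma$ encloses a vertex it is never a face boundary, so this contradicts (Z2) directly. Your separate face case, and the appeal to (Z1) there, never arise.
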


A refined ring lemma will be proved in Section~\ref{sec:refined}. It gives a quantitative lower bound in terms of the flower degree.

\begin{lemma}[Refined ring lemma for RCP]
\label{rringlemma}
Let $(G,\Theta)$ be a tame ADT satisfying (Z1)--(Z4), and assume that (Z2) is strengthened to (Z2'). Let $r:V\to(0,\infty)$ denote the radius function of an embedding in the plane. Then there exists a constant $C=C(\varepsilon_0,\varepsilon_1)$, where $\varepsilon_0$ is the constant in (Z2') and $\varepsilon_1$ is the tameness constant in the angle upper bound $\Theta(e)\leq \pi-\varepsilon_1$, such that for every edge $u\sim v$,
\[
\frac{r(v)}{r(u)} > e^{-C\cdot S(u)},
\]
where $S(u)=\sum_{v'\sim u}\deg(v')$ is the \emph{flower degree} of $u$.
\end{lemma}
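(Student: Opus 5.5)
The plan is to reduce the comparison of $r(v)$ and $r(u)$ to a chain of comparisons around the flower of $u$. Each link of the chain will cost a factor controlled only by $\varepsilon_0,\varepsilon_1$ and by the degree of the petal involved. Fix $u$ with neighbors $v_1,\dots,v_k$ in cyclic order ($k=\deg u$), so consecutive petals $C_{v_i},C_{v_{i+1}}$ form triangles $\{u,v_i,v_{i+1}\}$ with $C_u$.

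\textbf{Step 1 (angle sum at the center).} In each triangle of centers $(c_u,c_{v_i},c_{v_{i+1}})$ the side lengths are determined by the radii and the intersection angles $\Theta$. Call the angle at $c_u$ $\alpha_i(u)$. Since the pattern is regular and locally finite in the plane, these angles sum to $2\pi$. If $r(v_i)/r(u)$ is tiny, the angle $\alpha_i$ and also $\alpha_{i-1}$ are tiny: they are bounded by $C_1 \, r(v_i)/r(u)$ up to constants depending on $\varepsilon_1$, because tameness $\Theta\le\pi-\varepsilon_1$ prevents a small circle from being nearly internally tangent in a way that opens a large angle. Smallness of one petal alone does not contradict the angle sum. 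The standard ring-lemma mechanism instead propagates: a very small petal forces its neighbors in the flower to be small as well, unless the petal's own flower closes up.

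\textbf{Step 2 (one-step propagation with degree-dependent loss).} The key local estimate I would prove is the following. If $w$ is a petal of $u$ and $w'$ is a neighbor of $u$ adjacent to $w$, then $r(w')/r(u)\le e^{C_2\deg(w)}\cdot r(w)/r(u)$ whenever $r(w)/r(u)$ is below a threshold $\delta(\varepsilon_0,\varepsilon_1)$. The reason is that the circles around $w$ must themselves close up with angle sum $2\pi$ at $c_w$. Among them, $C_u$ is huge relative to $C_w$ and subtends at $c_w$ an angle at most $\pi-c(\varepsilon_1)$ from each side. The remaining neighbors of $w$, at most $\deg(w)$ of them, must therefore fill an angle of at least a fixed positive amount. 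By the pigeonhole principle, some consecutive pair subtends at least $c/\deg(w)$. One then argues inductively along the flower of $w$, with each step losing a bounded multiplicative factor, that the neighbors of $w$ on either side of $u$ (namely $w'$) cannot be more than $e^{C\deg w}$ times larger than $C_w$. Condition (Z2') enters here. It excludes degenerate chains in which a closed curve of circles around $w$ (or around the pair $u,w$) nearly encloses with angle sum close to $(s-2)\pi$, which would permit arbitrarily skewed radii. Its uniform constant $\varepsilon_0$ is what makes $C_2$ depend only on $\varepsilon_0,\varepsilon_1$. Conditions (Z1),(Z4) guarantee that the triangle-of-centers formulas are nondegenerate, with angles given by the law of cosines in terms of $\Theta$.

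\textbf{Step 3 (going around the flower).} Suppose $r(v)/r(u)\le e^{-CS(u)}$ for some neighbor $v=v_1$. Apply Step 2 repeatedly to $v_1\to v_2\to\cdots\to v_k$. This gives $r(v_i)/r(u)\le e^{-CS(u)+C_2(\deg v_1+\cdots+\deg v_{i-1})}\le e^{-(C-C_2)S(u)}$ for all $i$, and the threshold $\delta$ stays valid if $C\ge C_2+\log(1/\delta)$. By Step 1, the total angle at $c_u$ is then at most $k\,C_1e^{-(C-C_2)S(u)}<2\pi$, since $k\le S(u)$, for $C$ large depending only on $\varepsilon_0,\varepsilon_1$. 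This contradicts closure of the flower, so $r(v)/r(u)>e^{-CS(u)}$.

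The main obstacle is Step 2. Making the propagation loss genuinely exponential in $\deg(w)$, rather than depending on the whole local configuration as in Lemma~\ref{ringlemma}, requires a quantitative version of the geometric estimate that uses (Z2') uniformly. Obtuse intersection angles also have to be handled, since circles overlapping by up to $\pi-\varepsilon_1$ spoil tangency-type estimates. I would try first to work in the inversion centered at $c_w$, which sends $C_u$ to a near-line and the flower of $w$ to a chain that must wrap around. There I would bound each consecutive radius ratio by $e^{O(1)}$ via the law of cosines, so that over at most $\deg(w)$ steps the total loss is $e^{O(\deg w)}$.
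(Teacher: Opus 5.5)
Your Step 2 is false as stated, and it is the crux of the argument. Take $\Theta\equiv0$; a circle packing is tame and satisfies (Z1)--(Z4) and (Z2$'$). Let $C_u$ and $C_{w'}$ be tangent at a point $T$, and put a chain $s_1,\dots,s_m$ of circles in one cusp at $T$: each is tangent to $C_u$ and $C_{w'}$, consecutive ones are tangent, $s_1$ is closest to $T$, and $u,w',s_1$ bound a face. Inverting at $T$ turns the chain into equal, equally spaced circles in a strip, which gives $r(s_1)\le c\,m^{-2}\min\{r(u),r(w')\}$ for an absolute constant $c$. Thus $w=s_1$ is a petal of $u$ with $r(w)/r(u)$ below any threshold, $w'$ is the adjacent petal, and $\deg w=3$. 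Yet $r(w')/r(w)\ge m^2/c$ is unbounded, so no bound $r(w')\le e^{C_2\deg w}r(w)$ can hold.

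Your pigeonhole heuristic overlooks that $C_{w'}$ is itself a neighbor of $w$, and together with $C_u$ it can fill almost the whole angle at $c_w$. Also, consecutive petals of $w$ need not have ratio $e^{O(1)}$: here $u$ and $s_2$ are consecutive petals of $s_1$ with unbounded ratio. Walking around the flower of the \emph{small} petal cannot control its large neighbor.

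The missing idea is to charge the cost to the degree of the \emph{newly reached} petal, which is part of $S(u)$. The paper's Lemma~\ref{lemma3} walks around the flower of the boundary petal $v_j$. It applies the isoperimetric estimate Lemma~\ref{lemma1} to the polygons $v_i,u,v_j,w_q$ (this is where (Z2$'$) and (Z3) enter), and closes with Lemma~\ref{lemma2} or Lemma~\ref{lem:augment} at center $v_j$. The result is
\[
\Bigl(\sum_{v\in I(a,b)}r_v\Bigr)^{-1/2}<C_3^{\min(\deg v_a,\deg v_b)}\bigl(r_u^{-1/2}+r_{v_a}^{-1/2}+r_{v_b}^{-1/2}\bigr).
\]
This controls only the smaller of the two boundary petals, so your fixed-direction chain $v_1\to v_2\to\cdots$ would need a one-sided estimate that this argument does not provide. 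Instead, the paper grows a cyclic interval $I$ of petals, keeping $\sum_{v\in I}r_v\le C_0^{-S(u)+\sum_{v\in I}\deg v}\,r_u$. It shows that at every stage at least one of the two one-sided extensions preserves this bound.

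Step 1 also fails once angles are positive, so your final angle-sum contradiction does not go through either. Take $\Theta\equiv\pi/2$ and $r(u)=r(v_{i+1})$. A petal $v_i$ orthogonal to both $C_u$ and $C_{v_{i+1}}$ with $r(v_i)\to0$ has its center tending to an intersection point of $C_u$ and $C_{v_{i+1}}$. Hence $\alpha_i\to\pi/4$, not $0$: small petals do not make the angles at $c_u$ small. The closing contradiction has to come from a length bound around the flower instead, namely Lemma~\ref{lemma2}, $\sum_{v\sim u}r_v-\max_{v\sim u}r_v\ge\frac{\delta_2}{2}r_u$. In the paper, after $d-2$ extensions the interval contains every petal except one, say $w$. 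Its total radius is at most $C_0^{-\deg w}r_u$, which is incompatible with that bound.
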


 \subsection{Infinite trivalent hyperbolic polyhedra in \texorpdfstring{$\mathbb H^3$}{H3}}

\begin{definition}[Infinite trivalent hyperbolic polyhedron]
An infinite trivalent hyperbolic polyhedron (THP) is a convex
hyperbolic polyhedron
\[
    P=\bigcap_{i\in V}H_i\subset \mathbb H^3
\]
with infinitely many supporting half-spaces, such that every vertex is
incident to exactly three faces, after allowing ordinary, ideal, and
hyperideal vertices.
\end{definition}

\begin{proposition}[\cite{GeRCP}]\label{prop:thp-z12}
    Let $P = \bigcap_{i \in V} H_i$ be a THP with ADT $(G,\Theta)$. Then conditions (Z1) and (Z2) hold for $(G,\Theta)$.
\end{proposition}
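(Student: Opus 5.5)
The plan is to check (Z1) and (Z2) directly from the geometry of $P$, reading $\Theta$ through the circles at infinity of the supporting planes. Each face $f_i$ spans a plane $\Pi_i$ whose boundary is a circle $C_i\subset\partial\mathbb H^3=\hat{\mathbb C}$; the disk $D_i$ bounds the complement of $H_i$. Adjacent faces meet along an edge with dihedral angle $\Theta(e)$, and this is the exterior intersection angle of the disks $D_i,D_j$. So $\{C_i\}$ is a circle pattern with carrier graph $G$ and angle data $\Theta$. Both conditions then become statements about how three or more of these circles can sit, forced by convexity of $P$ and trivalence.

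\textbf{Step 1: (Z1).} Take a triangular face of $G$ with edges $e_1,e_2,e_3$, dual to a vertex $v$ of $P$ where $f_1,f_2,f_3$ meet. The vertex type of $v$ is fixed by the planes $\Pi_1,\Pi_2,\Pi_3$. Passing to normals in the hyperboloid model, the Gram matrix
\[
\begin{pmatrix}1&-\cos\Theta_{12}&-\cos\Theta_{13}\\-\cos\Theta_{12}&1&-\cos\Theta_{23}\\-\cos\Theta_{13}&-\cos\Theta_{23}&1\end{pmatrix}
\]
has positive, zero, or negative determinant according as $v$ is ordinary, ideal, or hyperideal. In the ordinary case the link of $v$ is a spherical triangle with angles $\Theta_{ij}$. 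Its angle sum then exceeds $\pi$, and the spherical triangle inequalities for the polar triangle give $\Theta_i+\Theta_j<\pi+\Theta_k$. This is exactly (Z1).

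The ideal case cannot have $\sum\Theta>\pi$, since the angle sum equals $\pi$ there. The hyperideal case gives $\sum\Theta<\pi$, because the link is a hyperbolic triangle, possibly truncated. So if $\sum\Theta>\pi$ the vertex must be ordinary, and (Z1) follows from the spherical case alone. The one point to check is that every triple of positive angles with sum greater than $\pi$ coming from an actual vertex is realized as a genuine spherical triangle; for this I would use the polar triangle with sides $\pi-\Theta_{ij}$ and the strict triangle inequalities.

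\textbf{Step 2: (Z2).} Take a simple closed curve $\gamma=e_1\cdots e_s$ in $G$ that does not bound a triangular face. Dually, the faces $f_1,\dots,f_s$ form a cyclic chain in which consecutive faces share the edges $e_i$, and the chain separates the boundary of $P$. I would establish the strict inequality $\sum\Theta(e_i)<(s-2)\pi$ by a Gauss--Bonnet argument.

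On the Euclidean sphere side, the disks $D_1,\dots,D_s$ form a closed chain, and the region they enclose, say the side containing the faces inside $\gamma$, contains at least one other circle. Consider the curvilinear polygon $Q$ formed by the arcs of $C_i$ between consecutive intersection points. Its interior angles are $\pi-\Theta(e_i)$, and its sides are circular arcs curving outward relative to $Q$, because the disks $D_i$ lie outside $P$'s projection.

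Next, project radially from a point of $\mathbb H^3$, or equivalently work in a hyperbolic plane cross-section transverse to all the $\Pi_i$. In that picture the chain gives a hyperbolic polygon whose exterior angles are at most $\Theta(e_i)$, with equality only at edges that actually meet. Alternatively, the hyperbolic polygon cut out in some auxiliary plane has angles $\pi-\Theta_i$ and positive area. Hyperbolic Gauss--Bonnet then gives $\sum(\pi-\Theta_i)<(s-2)\pi$, which rearranges to $s\pi-\sum\Theta_i>2\pi$, the required inequality.

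\textbf{Main obstacle.} The hard part is Step 2: making rigorous that such an auxiliary plane or polygon exists when the chain is non-facial and some edges or vertices are ideal or hyperideal. Strictness must come precisely from the curve not bounding a single vertex. My first attempt would be a different route: apply the finite Andreev/Rivin--Hodgson-type characterization (or \cite{GeRCP}'s finite version) to a finite truncation of $P$ that contains the faces along $\gamma$, and then pass the strict prismatic-circuit inequality back to $P$.
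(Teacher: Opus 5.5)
The paper gives no argument for this proposition; it is quoted from \cite{GeRCP}. Your Step 1 is sound. A vertex with $\Theta_{12}+\Theta_{13}+\Theta_{23}>\pi$ cannot be ideal (its link is Euclidean, with angle sum $=\pi$) or hyperideal (its link is a hyperbolic triangle in the polar plane, with sum $<\pi$). So it is ordinary, and the strict triangle inequalities for the polar of its spherical link, which has sides $\pi-\Theta_{ij}$, are exactly (Z1).

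Step 2, however, has a genuine gap. First, the Gauss--Bonnet step runs the wrong way. A hyperbolic $s$-gon with interior angles $\pi-\Theta_i$ has area $(s-2)\pi-\sum_i(\pi-\Theta_i)=\sum_i\Theta_i-2\pi$. Positivity therefore gives $\sum_i\Theta_i>2\pi$, which does not rearrange to $s\pi-\sum_i\Theta_i>2\pi$. The mechanism that would give (Z2) is the reverse one. Suppose the chain $D_1,\dots,D_s$ leaves an uncovered interstice $Q$ bounded by one arc of each $C_i$. Then the corner angles of $Q$ are $\Theta_i$, not $\pi-\Theta_i$: two disks meeting at exterior angle $\Theta$ leave an angle $\Theta$ outside both. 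The arcs bulge into $Q$, and Euclidean Gauss--Bonnet gives $\sum_i(\pi-\Theta_i)=2\pi+\sum_i\beta_i>2\pi$, where $\beta_i>0$ are the angular lengths of the arcs. Cycles with a chord but no interior vertex, such as the boundary of two adjacent triangles, enclose no such single interstice and need a separate argument.

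The deeper problem is that neither this interstice nor your auxiliary plane can come from convexity and trivalence alone, because (Z2) fails for some THPs as defined in this paper. In the upper half-space, let $\Lambda$ be the triangular lattice and $0<\epsilon<1$. Let $P$ be the region lying outside every unit hemisphere centred at a point of $\epsilon\Lambda$.
\begin{itemize}
\item Since all radii are equal, the face on the hemisphere centred at $c$ is the lift of the Voronoi hexagon of $c$.
\item Hence $P$ is an infinite convex polyhedron with ordinary trivalent vertices, and its ADT is the triangular lattice.
\item Every dihedral angle is $\Theta$ with $\cos\Theta=\epsilon^2/2-1$, so $\Theta>2\pi/3$.
\end{itemize}
Condition (Z1) holds. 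But the boundary of two adjacent triangles has angle sum $4\Theta>2\pi$, and the link of a vertex has $6\Theta>4\pi$. Here the disks overlap non-neighbours massively, and no interstice exists.

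The compact analogue, a finely faceted large sphere, also defeats your fallback. Andreev's theorem requires non-obtuse angles and concerns only prismatic $3$- and $4$-circuits. Rivin--Hodgson controls only closed geodesics of the polar metric. Neither yields $\sum_i\Theta_i<(s-2)\pi$ for short non-facial cycles in the obtuse regime, and there the inequality is genuinely false. A proof of (Z2) must therefore use an extra hypothesis from \cite{GeRCP}, for instance regularity of the circles at infinity (non-adjacent disks do not overlap, so a non-facial chain really encloses an interstice). Your proposal neither identifies nor uses such a hypothesis.
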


\begin{theorem}[RCP--THP correspondence \cite{GeRCP}]
Let \(\mathcal P=\{C_i\}_{i\in V}\) be an RCP with carrier graph
\(G=(V,E)\), and let \(\Theta:E\to(0,\pi)\) be its intersection angle
function. If \((G,\Theta)\) satisfies \((Z1)\) and \((Z2)\), then
\[
    P(\mathcal P)=\bigcap_{i\in V}H_i
\]
is an infinite trivalent hyperbolic polyhedron. It is combinatorially
isomorphic to the Poincar\'e dual of \(G\), and its dihedral angles are
given by
\[
    \Theta_P(e^*)=\Theta(e),\qquad e\in E.
\]
\end{theorem}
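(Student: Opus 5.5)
The statement just before this point is the RCP--THP correspondence, cited from \cite{GeRCP}. A proof plan is as follows.

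The plan is to build the polyhedron directly from the circles, viewing $\C\cup\{\infty\}$ as the sphere at infinity of $\mathbb H^3$ in the upper half-space model. Each circle $C_i$ bounds a disk $D_i$. Let $H_i$ be the closed half-space whose boundary is the hyperbolic plane $\Pi_i$ spanned by $C_i$, chosen on the side away from $D_i$. Then $P(\mathcal P)=\bigcap_i H_i$ is convex by construction. The real content lies in three claims: (a) every $\Pi_i$ actually supports a face, so no half-space is redundant; (b) two faces are adjacent exactly when $i\sim j$ in $G$, with dihedral angle $\Theta([i,j])$; (c) every vertex, whether ordinary, ideal or hyperideal, is trivalent and corresponds to a triangular face of $G$.

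Step (b) and the angle identity come from the standard fact that two planes $\Pi_i,\Pi_j$ meet in $\mathbb H^3$ iff $C_i,C_j$ intersect. Their dihedral angle, measured inside $H_i\cap H_j$, equals the exterior intersection angle of the circles. With the convention that $\Theta$ is this intersection angle, this gives $\Theta_P(e^*)=\Theta(e)$. For the incidence structure I would use that $\mathcal P$ is regular: the carrier is a disk triangulation, so the interstices are exactly the triangles of $G$. Take a triangular face $\{i,j,k\}$. Condition (Z1) decides the sign of $\Theta_{ij}+\Theta_{jk}+\Theta_{ik}-\pi$. When this sum exceeds $\pi$, it controls whether the three disks share a common point. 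Accordingly, the three planes meet in a point of $\mathbb H^3$ (ordinary vertex), on the sphere at infinity (ideal vertex), or beyond it (hyperideal vertex, via the polar plane). In each case one obtains exactly one vertex, incident to precisely $\Pi_i,\Pi_j,\Pi_k$. Trivalence then follows because the local picture near a triangle contains no other circle. This again uses embeddedness of the carrier.

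The main obstacle is to exclude unwanted intersections. Non-adjacent circles $C_i,C_l$ could still cut each other, or a fourth plane $\Pi_l$ could clip off the vertex of the triangle $ijk$. This would create extra edges or faces and destroy the combinatorial isomorphism with the dual of $G$. Here I would use (Z2). For a simple closed curve $\gamma$ in $G$ that does not bound a face, (Z2) forces $\sum_\gamma\Theta<(s-2)\pi$, the angle sum of a hyperbolic $s$-gon. Run a Gauss--Bonnet argument on the polygon of circle centers along $\gamma$, or equivalently on the interstitial region. This shows the disks along $\gamma$ cannot close up onto a configuration in which a non-neighbour overlaps, so the circles inside and outside $\gamma$ are separated. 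Combining this with the local finiteness of the pattern in its domain, I would verify (a)--(c) locally on finite subpatterns. An exhaustion argument then passes to the infinite intersection; local finiteness guarantees that only finitely many half-spaces meet any compact set.
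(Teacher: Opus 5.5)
The paper does not prove this statement. It is quoted from \cite{GeRCP}, so your outline has to stand on its own.

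\textbf{What is right.} Your set-up is sound. You take $H_i$ on the side away from $D_i$. The dihedral angle of $H_i\cap H_j$ equals the angle of the region outside both disks at a crossing point, which is the $\Theta$ with $|O_iO_j|^2=r_i^2+r_j^2+2r_ir_j\cos\Theta$. Local finiteness reduces everything to finitely many half-spaces near any compact set.

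\textbf{The gap: trivalence and faces.} The actual content, that $\bigcap_i H_i$ has exactly the combinatorics of the dual of $G$, is not proved. The reason you give for trivalence, that ``the local picture near a triangle contains no other circle'' by embeddedness of the carrier, is false. Here is a counterexample.
\begin{itemize}
\item Let $i$ have degree $3$, with separating link triangle $jkl$.
\item Centre $D_j,D_k,D_l$ on three rays from the centre of $C_i$ at mutual angles $2\pi/3$, each meeting $C_i$ in an arc of angular size $\ell\in(\pi,4\pi/3)$.
\item The centre triangulation is embedded, and consecutive arcs overlap without nesting. So each face triple $\{i,j,k\}$ is an honest ordinary vertex: the three planes meet in a point, the link is a spherical triangle, and (Z1) holds automatically.
\item Yet the open half-planes $\Pi_i\setminus H_m$, $m\in\{j,k,l\}$, cover $\Pi_i$. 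Their complements are half-planes over arcs of size $2\pi-\ell<\pi$, permuted by the order-$3$ rotation about the vertical axis through the centre of $C_i$. A common point would give a nonempty rotation-invariant convex set, which must contain the fixed point, but none of them does.
\item Hence $\Pi_i$ supports no face of $P$, and every $v_{ijk}$ is cut off by the fourth plane $\Pi_l$.
\end{itemize}
What forbids this is (Z2) for the separating triangle $jkl$. If $\Pi_j\cap\Pi_k$ enters the half-ball over $D_l$, then either $\Pi_j,\Pi_k,\Pi_l$ meet at a point of $\mathbb H^3$, or $\overline{D_l}$ contains both endpoints of $\Pi_j\cap\Pi_k$. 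In either case $\Theta_{jk}+\Theta_{kl}+\Theta_{lj}>\pi$.

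\textbf{Other misdirections.}
\begin{itemize}
\item \emph{The role of (Z1).} (Z1) does not ``decide the sign'' of $\Theta_{ij}+\Theta_{jk}+\Theta_{ik}-\pi$. That sign is data and only predicts the vertex type. What (Z1) does is exclude degenerate face triples when the sum exceeds $\pi$. An example is $D_k$ containing both points of $C_i\cap C_j$: then the whole geodesic $\Pi_i\cap\Pi_j$ lies outside $H_k$, and faces $i,j$ are not adjacent in $P$. Computing in the plane orthogonal to $C_i,C_j,C_k$, the sum exceeds $\pi$ and one (Z1) inequality fails.
\item \emph{Non-adjacent overlaps.} You single these out as the main obstacle. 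They are already excluded if $G(\mathcal P)$ is the contact graph, as the paper's existence theorem indicates. The dangerous interactions are among adjacent disks, as above.
\item \emph{Gauss--Bonnet.} Applied to the polygon of centres, it gives nothing. That polygon is Euclidean and its angles are not the $\Theta$'s. The relevant mechanism is the spherical or Euclidean link at a would-be common point of three planes.
\end{itemize}

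\textbf{What is missing.} For every $i$ you need to prove three things:
\begin{itemize}
\item the arcs $C_i\cap D_j$ ($j\sim i$) occur in the cyclic order of the flower, pairwise non-nested;
\item non-consecutive arcs are disjoint;
\item no plane other than $\Pi_i,\Pi_j,\Pi_k$ meets a corner $v_{ijk}$ or its truncation triangle.
\end{itemize}
This is where (Z1) on face triples, and (Z2) at least on separating triangles, genuinely enter. Only after this does your exhaustion argument yield the combinatorial isomorphism.
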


There are two possible conformal types for THPs.

\begin{definition}[Parabolic and hyperbolic THP]
Let \(P\) be an infinite THP, and let \(P^{\rm trun}\) be the polyhedron
obtained by truncating all ideal and hyperideal vertices. Let
\(\{K_n\}_{n\geq 1}\) be an exhaustion of \(\mathbb H^3\) by compact sets.

\begin{enumerate}[(1)]
    \item \(P\) is called \emph{parabolic} if
    \(P^{\rm trun}\setminus K_n\) tends to a single point in
    \(\partial_\infty\mathbb H^3\). Equivalently, the set of accumulation
    points of the boundary faces of \(P\) consists of one point.

    \item \(P\) is called \emph{hyperbolic} if
    \(P^{\rm trun}\setminus K_n\) tends to a half-sphere in
    \(\partial_\infty\mathbb H^3\). Equivalently, the accumulation points of
    the boundary faces form a circle in \(\partial_\infty\mathbb H^3\), and
    all faces of \(P\) lie in one of the two half-spaces bounded by the
    hyperbolic plane determined by this circle.
\end{enumerate}
\end{definition}

This definition is independent of the chosen exhaustion. On the circle
pattern side, the corresponding dichotomy is the usual RCP-parabolic/RCP-hyperbolic dichotomy: an angled disk triangulation graph is RCP-parabolic
if it is realized by an RCP with carrier \(\mathbb C\), and RCP-hyperbolic
if it is realized by an RCP with carrier \(\mathbb D\). The uniformization
theorem for RCPs and THPs says that, under the angle conditions above and
the assumption \(\sup_{e\in E}\Theta(e)<\pi\), this analytic type agrees
with the VEL type of the underlying disk triangulation graph; with positive
angles, it also agrees with the parabolic/hyperbolic type of the associated
THP.

\subsection{Geometric characteristic number}

We now introduce the geometric characteristic number for THP, following Ge-Lin \cite{GeLin2024}.

Let \(P\subset \mathbb H^3\) be a trivalent hyperbolic polyhedron. For a vertex \(v \), there are exactly three faces
incident to \(v\). Denote them by
\[
    f_1,\ f_2,\ f_3,
\]
 Write
\[
    e_{ij}=f_i\cap f_j,\qquad 1\leq i<j\leq 3,
\]
and let
\[
    \Theta_{ij}\in(0,\pi)
\]
be the dihedral angle of \(P\) along \(e_{ij}\).

We associate to the pair \((v,f)\) the angle
\[
\theta_v^f
=
\arccos\!\left(
\frac{
1+\cos\Theta_{12}+\cos\Theta_{13}-\cos\Theta_{23}
}{
2\sqrt{1+\cos\Theta_{12}}\sqrt{1+\cos\Theta_{13}}
}
\right).
\]
Equivalently, \(\theta_v^f\) is the Euclidean angle at the vertex
corresponding to \(f_1\) in the auxiliary Euclidean triangle whose side
lengths are
\[
    l_{ij}=\sqrt{2+2\cos\Theta_{ij}},
    \qquad 1\leq i<j\leq 3.
\]
In particular, if \(v\) is incident to the three faces
\(f_1,f_2,f_3\), then the three associated angles satisfy
\begin{equation}\label{angle sum}
    \theta_v^{f_1}+\theta_v^{f_2}+\theta_v^{f_3}=\pi.
\end{equation}
Indeed, they are precisely the three interior angles of the above auxiliary
Euclidean triangle.

\begin{definition}[Geometric characteristic number of a THP]
Let \(P\) be a trivalent hyperbolic polyhedron. The geometric characteristic
number of a face \(f\in F(P)\) is defined by
\[
    L_f(P)
    =
    2\pi-\sum_{v\in f}\theta_v^f .
\]
The geometric characteristic number of \(P\) is the collection
\[
    L(P)=\bigl(L_f(P): f\in F(P)\bigr).
\]
\end{definition}

In the special case where all dihedral angles are zero, the auxiliary
Euclidean triangles are equilateral. Hence each local angle is \(\pi/3\),
and the above definition gives
\[
    L_f(P)
    =
    2\pi-\frac{\pi}{3}\deg(f),
\]
where \(\deg(f)\) denotes the number of vertices of the face \(f\).
Consequently, in the unimodular setting,
\[
    \mathbb E[L_{f_\rho}(P)]=0
    \quad\Longleftrightarrow\quad
    \mathbb E[\deg(f_\rho)]=6,
\]
in the unweighted case.

In our previous work \cite{IIP2026}, the geometric characteristic number at face $f$ of an
ideal hyperbolic polyhedron $\mathcal{P}$ (not necessarily trivalent) with dihedral angles $\Theta_{\mathcal{P}}$ was
\[
T_P(f)=2\pi-\sum_{e\subset\partial f}\Theta_P(e).
\]
The following proposition shows that  \(L_f(P)\) and \(T_{P}(f)\) introduced in \cite{IIP2026} are compatible on their common domain of definition.
Thus, on the class of trivalent ideal hyperbolic polyhedra, the
parabolic--hyperbolic dichotomies in the two settings agree exactly.

\begin{proposition}[Compatibility with the ideal case]
\label{prop:compatibility-ideal}
Let \(P\) be a trivalent ideal hyperbolic polyhedron with dihedral angles $\Theta_{\mathcal{P}}$, and let \(f\) be a
face of \(P\).  Then
\[
    L_f(P)=T_P(f).
\]
More precisely, if \(v\in f\) is a vertex and the two edges of \(f\)
incident to \(v\) have dihedral angles \(A\) and \(B\), then
\[
    \theta_v^f=\frac{A+B}{2}.
\]
\end{proposition}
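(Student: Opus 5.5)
The plan is to use the ideal-vertex condition to put the auxiliary Euclidean triangle at $v$ in closed form, read off the angle at the vertex corresponding to $f$, and then sum over the vertices of $f$.

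\textbf{Setting up the triangle.} Fix a vertex $v$ of $f$ and label its three faces $f_1=f,\ f_2,\ f_3$, so that the two edges of $f$ at $v$ are $e_{12}$ and $e_{13}$. Write
\[
A=\Theta_{12},\qquad B=\Theta_{13},\qquad C=\Theta_{23}.
\]
Since $v$ is an ideal vertex, its link is a Euclidean triangle whose angles are the three dihedral angles at $v$. Hence $A+B+C=\pi$, with $A,B,C\in(0,\pi)$. The half-angle identity gives $\sqrt{2+2\cos\Theta}=2\cos(\Theta/2)$, so the auxiliary triangle has side lengths
\[
\ell_{12}=2\cos\tfrac A2,\qquad \ell_{13}=2\cos\tfrac B2,\qquad \ell_{23}=2\cos\tfrac C2.
\]

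\textbf{Identifying the triangle.} I will not evaluate the arccos formula directly. Instead, consider a Euclidean triangle with angles
\[
\alpha=\tfrac{B+C}{2},\qquad \beta=\tfrac{A+C}{2},\qquad \gamma=\tfrac{A+B}{2}.
\]
These are positive and sum to $\pi$, so such a triangle exists. Take it inscribed in a circle of radius $1$. By the law of sines its sides are $2\sin\alpha,\ 2\sin\beta,\ 2\sin\gamma$. Because $A+B+C=\pi$,
\[
2\sin\alpha=2\sin\!\left(\tfrac{\pi}{2}-\tfrac A2\right)=2\cos\tfrac A2=\ell_{12},
\]
and in the same way $2\sin\beta=\ell_{13}$ and $2\sin\gamma=\ell_{23}$. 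By SSS congruence this triangle is the auxiliary triangle.

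In the auxiliary triangle, the vertex corresponding to $f_1$ is where the sides $\ell_{12}$ and $\ell_{13}$ meet, so the side opposite it is $\ell_{23}$. In the model triangle, the side $2\sin\gamma=\ell_{23}$ is opposite the angle $\gamma$. Therefore
\[
\theta_v^f=\gamma=\frac{A+B}{2},
\]
which is the second assertion. Since the arccos formula in the definition of $\theta_v^f$ is the law of cosines for this triangle, no separate check of its value is needed.

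\textbf{Summing over the face.} The face $f$ is a polygon, so each edge $e\subset\partial f$ has exactly two endpoints on $f$. At each endpoint, $e$ is one of the two edges of $f$ at that vertex, and so it contributes $\Theta_P(e)/2$ to the corresponding $\theta_v^f$. Summing the formula over all vertices of $f$ gives
\[
\sum_{v\in f}\theta_v^f=\sum_{e\subset\partial f}\Theta_P(e),
\qquad\text{hence}\qquad L_f(P)=T_P(f).
\]

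The only step needing care is the identification of which angle of the auxiliary triangle is $\theta_v^f$, that is, that it is the angle opposite $\ell_{23}$. Matching the law-of-sines triangle to the side lengths handles this without any trigonometric grinding.
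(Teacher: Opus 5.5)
Your proof is correct. Its outline matches the paper's: use $A+B+C=\pi$ at the ideal vertex, compute $\theta_v^f$ locally, then sum over the edges of $f$ so that each dihedral angle is counted twice with weight $\tfrac12$. The local identity, however, is obtained differently.

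The paper works directly with the arccos formula. It substitutes $-\cos C=\cos(A+B)$ and factors the numerator using $1+\cos A+\cos B+\cos(A+B)=4\cos\tfrac A2\cos\tfrac B2\cos\tfrac{A+B}{2}$. This cancels against the denominator $2\sqrt{1+\cos A}\sqrt{1+\cos B}=4\cos\tfrac A2\cos\tfrac B2$. Finally it uses $0<A+B<\pi$ to invert the cosine.

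You instead identify the whole auxiliary triangle at once. The triangle with angles $\tfrac{B+C}{2},\tfrac{A+C}{2},\tfrac{A+B}{2}$ inscribed in a unit circle has sides $2\cos\tfrac A2,\ 2\cos\tfrac B2,\ 2\cos\tfrac C2$ by the law of sines. SSS congruence then gives $\theta_v^f=\tfrac{A+B}{2}$, together with your correct observation that the arccos formula is the law of cosines for the angle opposite $\ell_{23}$.

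Your route has three advantages. It avoids the product-to-sum identity and the range check needed to invert arccos. It shows as a byproduct that the auxiliary triangle is nondegenerate in the ideal case, since the triangle inequalities hold automatically. It also produces all three angles simultaneously, which makes $\theta_v^{f_1}+\theta_v^{f_2}+\theta_v^{f_3}=\pi$ transparent.

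The paper's computation is more mechanical, but it stays closer to the defining formula and needs no auxiliary construction.
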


\begin{proof}
Let \(v\) be a vertex of \(P\), and denote the three faces incident to
\(v\) by \(f_1,f_2,f_3\). Write
\[
    A=\Theta_{12},
    \qquad
    B=\Theta_{13},
    \qquad
    C=\Theta_{23},
\]
and suppose that \(f=f_1\). Since \(v\) is an ideal trivalent vertex, the
three dihedral angles satisfy
\[
    A+B+C=\pi.
\]
By the definition of the local angle \(\theta_v^{f_1}\),
\[
    \cos\theta_v^{f_1}
    =
    \frac{1+\cos A+\cos B-\cos C}
    {2\sqrt{1+\cos A}\sqrt{1+\cos B}}.
\]
Since \(C=\pi-A-B\), we have
\[
    -\cos C=\cos(A+B),
\]
and hence
\[
    1+\cos A+\cos B-\cos C
    =
    1+\cos A+\cos B+\cos(A+B).
\]
Using the elementary identity
\[
    1+\cos A+\cos B+\cos(A+B)
    =
    4\cos\frac{A}{2}
     \cos\frac{B}{2}
     \cos\frac{A+B}{2},
\]

we obtain
\[
    \cos\theta_v^{f_1}
    =
    \cos\frac{A+B}{2}.
\]
Since \(0<A+B<\pi\), it follows that
\[
    \theta_v^{f_1}
    =
    \frac{A+B}{2}.
\]

Now let the boundary edges of \(f\) be
\[
    e_1,\ldots,e_n
\]
in cyclic order, with corresponding dihedral angles
\[
    \Theta_1,\ldots,\Theta_n.
\]
Let
\[
    v_i=e_{i-1}\cap e_i,
\]
where the indices are understood cyclically. The preceding local identity
gives
\[
    \theta_{v_i}^{f}
    =
    \frac{\Theta_{i-1}+\Theta_i}{2}.
\]
Consequently,
\begin{align*}
    \sum_{v\in f}\theta_v^f
    &=
    \frac{1}{2}
    \sum_{i=1}^{n}
    \bigl(\Theta_{i-1}+\Theta_i\bigr) \\
    &=
    \sum_{i=1}^{n}\Theta_i \\
    &=
    2\pi-T_P(f).
\end{align*}
Therefore,
\[
    L_f(P)
    =
    2\pi-\sum_{v\in f}\theta_v^f
    =
    T_P(f).
\]
\end{proof}

\subsection{Unimodularity and mass transport}

A rooted graph is a connected, locally finite graph \(G=(V,E)\) with a
distinguished vertex \(\rho\in V\). The space of rooted locally finite
graphs is equipped with the local (Benjamini-Schramm) topology.
Let \(\mathcal{G}_{\bullet}\) denote the space of isomorphism classes of
rooted locally finite graphs, and let \(\mathcal{G}_{\bullet\bullet}\)
denote the space of isomorphism classes of locally finite graphs with an
ordered pair of distinguished vertices. Following Aldous-Lyons
\cite{AldousLyons2007}, a random rooted graph \((G,\rho)\) is called
\emph{unimodular} if it satisfies the \emph{mass transport principle}:
for every nonnegative Borel function
\[
f:\mathcal{G}_{\bullet\bullet}\to[0,\infty),
\]
one has
\[
\mathbb{E}\Big[\sum_{v\in V(G)} f(G,\rho,v)\Big]
=
\mathbb{E}\Big[\sum_{v\in V(G)} f(G,v,\rho)\Big].
\]
Equivalently, the expected mass sent from the root equals the expected mass
received at the root. \cite{AldousLyons2007,BenjaminiSchramm2001}.

Unimodularity can be regarded as the distributional analogue of choosing a root uniformly at random in a finite graph \cite{AldousLyons2007,BenjaminiSchramm2001,BenjaminiCurien2012,Curien2018}.
Many natural random planar graphs and maps are unimodular, including local weak limits of finite planar graphs \cite{AngelSchramm2003}, the uniform infinite planar triangulation
(UIPT), the uniform infinite planar quadrangulation (UIPQ), and planar graphs
arising from stationary point processes.

\subsection{Vertex extremal length and invariant amenability}

Vertex extremal length is a discrete conformal invariant for infinite
graphs and is useful in the study of disk triangulations and circle
packings \cite{HeSchramm1995}. Let \(G=(V,E)\) be an infinite, locally
finite graph. A \emph{vertex metric} is a function
\[
m:V\to[0,\infty).
\]
For a vertex path \(\gamma=(v_0,v_1,\ldots)\), define
\[
\ell_m(\gamma)=\sum_k m(v_k),
\qquad
\operatorname{area}(m)=\sum_{v\in V}m(v)^2.
\]
Fix a root \(\rho\), and let \(\Gamma(\rho\to\infty)\) be the family of
all infinite simple paths starting from \(\rho\). The vertex extremal
length from \(\rho\) to infinity is
\[
\operatorname{VEL}_G(\rho\to\infty)
=
\sup_m
\frac{
\left(\inf_{\gamma\in\Gamma(\rho\to\infty)}\ell_m(\gamma)\right)^2
}
{\operatorname{area}(m)},
\]
the supremum is over all vertex metrics \(m\) with
\(0<\operatorname{area}(m)<\infty\). The graph \(G\) is called
\emph{VEL-parabolic} if
\[
\operatorname{VEL}_G(\rho\to\infty)=\infty,
\]
and \emph{VEL-hyperbolic} otherwise.

For disk triangulations, VEL type is closely related to circle packing
type. In particular, the parabolic case corresponds to circle packing in
\(\mathbb C\), while the hyperbolic case corresponds to circle packing in
\(\mathbb D\) \cite{HeSchramm1995}. For bounded degree planar
triangulations, this is also related to recurrence and transience of
simple random walk \cite{DoyleSnell1984,GGN2013,HeSchramm1995,Soardi1994}.
The circle patterns also connect harmonic functions with the conformal type of the patterns
\cite{BenjaminiSchramm1996Harmonic}.

For unimodular random rooted graphs, invariant amenability is the
unimodular version of amenability \cite{AldousLyons2007}. For one-ended
unimodular random triangulations, Angel-Hutchcroft-Nachmias-Ray proved
a parabolic--hyperbolic dichotomy relating circle packing type, VEL type,
invariant amenability, and random walk behavior
\cite{angel2016unimodular,angel2018hyperbolic}. More precisely, the
parabolic case corresponds to invariant amenability, while the hyperbolic
case corresponds to invariant non-amenability. The hyperbolic case is transient without any bounded-degree assumption, whereas recurrence in the parabolic case requires an additional bounded-degree assumption; see \cite{angel2018hyperbolic}.

\section{Unimodular Gauss-Bonnet formula and dichotomy}
\label{sec:dichotomy}
In this section, we prove the unimodular Gauss-Bonnet formula and
derive the ADT and THP dichotomies. Theorem~\ref{thm:dichotomy} then follows from Theorems~\ref{thm:chr2deg} and~\ref{thm:dichotomya}. 

\begin{theorem}\label{thm:chr2deg}
Let $P$ be a unimodular, locally finite tame random THP satisfying (Z3),(Z4) and rooted at face $f$. If $E[\deg(f)]< \infty$, then:
\[
    \mathbb E[L_f(P)]
    =
    2\pi-\frac{\pi}{3}\mathbb E[\deg(f)],
\]
where \(\deg(f)\) denotes the number of vertices of the face \(f\).

\end{theorem}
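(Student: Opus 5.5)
The plan is a mass transport argument on the dual triangulation $M(P)$, combined with the angle identity \eqref{angle sum}. By definition,
\[
L_f(P)=2\pi-\sum_{v\in f}\theta_v^f ,
\]
so it suffices to prove
\[
\mathbb E\Big[\sum_{v\in f}\theta_v^{f}\Big]=\frac{\pi}{3}\,\mathbb E[\deg(f)].
\]
Under duality, the vertices of $P$ on $f$ correspond to the triangular faces of $M(P)$ incident to $\rho$. Each such triangle $t$ carries three angles $\theta_t^{\rho},\theta_t^{u},\theta_t^{w}$, one at each corner, and they sum to $\pi$. These angles are determined by the three edge marks $\Theta$ on $\partial t$, so they are measurable functions of the rooted marked map. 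Tameness, together with (Z3) and (Z4), is used only to ensure the auxiliary Euclidean triangle with sides $\sqrt{2+2\cos\Theta_{ij}}$ exists and is nondegenerate. Then each $\theta$ is well defined and lies in $[0,\pi]$. (Z4) is exactly the triangle-type inequality needed here; I would check this briefly.

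Next I define a transport on $\mathcal G_{\bullet\bullet}$ with marks. For vertices $x,y$ of $M(P)$, let
\[
F(G,x,y,\Theta)=\sum_{t\ni x,\,y}\frac{1}{3}\,\theta_t^{y},
\]
where the sum runs over triangles $t$ containing both $x$ and $y$, including the case $y=x$. Thus each vertex $x$ sends, for every incident triangle $t$, one third of the angle at each of the three corners of $t$ to that corner. The total mass sent from $\rho$ is
\[
\sum_{t\ni\rho}\frac{1}{3}\big(\theta_t^\rho+\theta_t^u+\theta_t^w\big)=\frac{\pi}{3}\deg(\rho),
\]
by \eqref{angle sum}, where $\deg(\rho)=\deg(f)$ because the number of incident triangles equals the degree in a disk triangulation. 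The total mass received at $\rho$ is
\[
\sum_{t\ni\rho}\frac13\cdot 3\,\theta_t^{\rho}=\sum_{v\in f}\theta_v^{f},
\]
since each triangle $t\ni\rho$ has three vertices, and each of them sends $\tfrac13\theta_t^\rho$ to $\rho$. Because $F\ge0$, the mass transport principle gives equality of the two expectations. The hypothesis $\mathbb E[\deg(f)]<\infty$ makes both sides finite, since the received sum is at most $\pi\deg(\rho)$. Subtracting from $2\pi$ then yields the formula.

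The main obstacle is bookkeeping rather than analysis. First, the transport must be a Borel function on the space of doubly rooted marked maps. Second, the case of a triangle whose corners coincide, or multiple triangles sharing the pair $x,y$, must be counted correctly. This is handled by summing over triangles rather than over vertex pairs and by using simplicity of the triangulation. Third, nondegeneracy of the auxiliary triangle must hold, so that \eqref{angle sum} is valid at every polyhedral vertex. I would settle this by noting that (Z4) implies $\ell_{ij}\le\ell_{ik}+\ell_{jk}$, and that tameness keeps every $\ell_{ij}>0$.
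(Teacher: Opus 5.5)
Your proof is correct and is essentially the paper's argument. The paper uses the transport $m(u,v)=\sum_{t\ni u,v}\theta_t^u$: each vertex sends its own corner angle to all three corners of each incident triangle, so $6\pi-3L_u$ is sent and $\pi\deg(u)$ is received. Your $F(x,y)=\tfrac13 m(y,x)$ is that same transport read in the opposite direction, and both arguments reduce to the corner-angle sum $\pi$.

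Your extra check that (Z4) forces the triangle inequality for the lengths $\sqrt{2+2\cos\Theta_{ij}}$ is valid, and the paper takes it for granted. Two small corrections: (Z4) is strictly stronger than that inequality, not equivalent to it. Also, $\Theta<\pi$ alone already gives $\ell_{ij}>0$, so tameness is not needed for that step.
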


\begin{proof}
    Let \((G,\rho,\Theta)\) be the dual ADT of \(P\), where the root vertex \(\rho\) corresponds to the root face \(f\) of \(P\). 
    Through the THP/ADT duality, it is equivalent to proving the following ADT formula for unimodular, locally finite, tame
    ADTs satisfying \((Z1)\)--\((Z4)\).:
    \[
    \mathbb{E}[L_\rho(G,\Theta)]
    =
    2\pi-\frac{\pi}{3}\mathbb{E}[\deg_G(\rho)].
    \]
Let \[\theta_u^f = \theta_u^{vw} = \text{arccos}\left( \frac{1+\cos\Theta_{uv} + \cos\Theta_{uw} - \cos\Theta_{vw}}{2 \sqrt{1+\cos\Theta_{uv}}\sqrt{1+\cos\Theta_{uw}}} \right) ,\] where \(f\) is the face defined by the points \(u, v\text{ and } w\). This is the summand in the definition of the geometric characteristic number
 $L(G,\Theta)$.
We define a mass transport function as follows:

\[
m(u, v) = 
\begin{cases}
2\pi - L_u(G, \Theta),&u=v,\\
\theta_u^{vw_1} + \theta_u^{vw_2}, &u\sim v,\\
0, &otherwise,
\end{cases}
\]
where \(uvw_1\) and \(uvw_2\) are the two faces that contain \(uv\) as an edge when \(u\sim v\).

\begin{figure}
    \centering
    \includegraphics[width=0.6\linewidth]{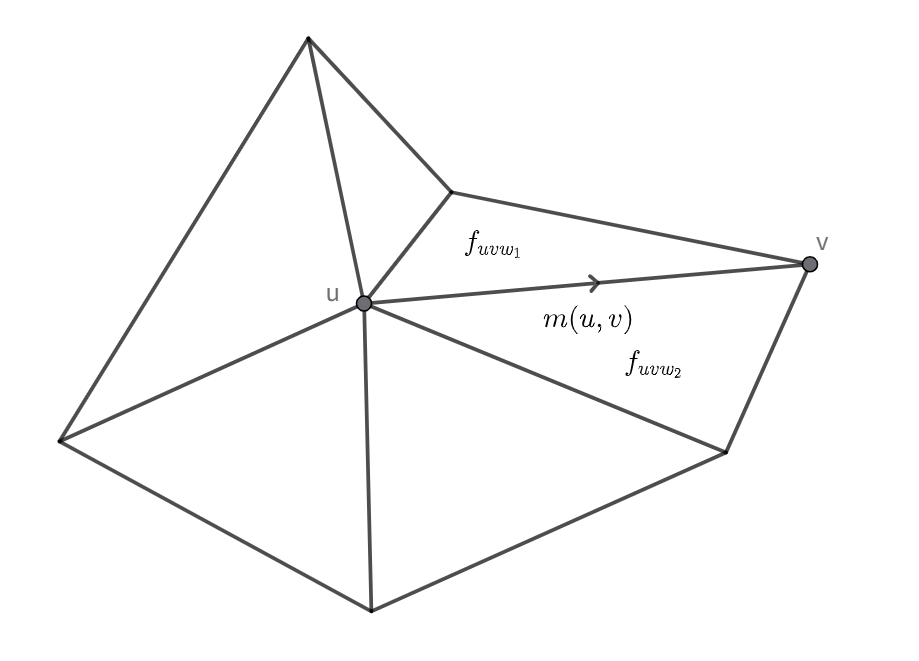}
    \caption{The mass transport corresponding to adjacent faces}
    \label{fig:placeholder}
\end{figure}

Then we calculate the mass received and sent at each vertex \(u\). Since the graph is locally finite, all the sums involved are finite sums. First, the mass sent:

\begin{align*}
\sum_{v \in V(G)} m(u, v)
&= 
\sum_{u\sim v} m(u, v) + 2\pi-L_u(G, \Theta)\\ 
&= 
\sum_{u\sim v} (\theta_u^{vw_1} + \theta_u^{vw_2}) + 2\pi-L_u(G, \Theta)\\
&=
2\sum_{f \ni u} \theta_u^f +2\pi- L_u(G, \Theta)\\
&=
6\pi-3 L_u(G, \Theta),
\end{align*}
Therefore the mass sent from the root \(\rho\) is
\[6\pi - 3L_\rho(G,\Theta).\]
And the mass received:
\begin{align*}
\sum_{v \in V(G)} m(v, u)
&=
\sum_{u\sim v} m(v, u) + 2\pi - L_u(G, \Theta)\\
&=
\sum_{u\sim v} (\theta_v^{uw_1} + \theta_v^{uw_2}) + \sum_{u \in f} \theta_u^f\\
&=
\sum_{f \ni u} (\theta_v^f + \theta_w^f + \theta_u^f) && \text{($u,v,w$ are vertices of face $f$)}\\
&=
\sum_{f \ni u} \pi && (\text{By\ }~\eqref{angle sum})\\
&=
\pi \deg_G(u),
\end{align*}
Then the mass received at root \(\rho\) is 
\[\pi \deg_G(\rho).\]
By the mass transport principle, the expected mass sent equals the expected mass received at the root. Therefore, we have the following equation:
\[\mathbb{E}[L_\rho(G, \Theta)] =  2\pi - \frac{\pi}{3}\mathbb{E}[\deg_G(\rho)].\]
\end{proof}

\begin{remark}
 The term $m(u,u)$ was used to simplify the calculation.
\end{remark}

\begin{remark}
Although \(L_\rho(G,\Theta)\) depends locally on the angle \(\Theta\),
its expectation depends only on \(\mathbb E[\deg_G(\rho)]\).  This follows
from the mass transport principle and recovers the usual degree-six
threshold from the unweighted case.
\end{remark}


\begin{theorem}[Dichotomy]\label{thm:dichotomya}
    Let $(G,\rho,\Theta)$ be an infinite, locally finite, simple, 
    one-ended, ergodic, tame unimodular random ADT satisfying 
    \emph{(Z1)--(Z4)}. If $\mathbb{E}[\deg(\rho)]<\infty$, the following assertions are equivalent:
\begin{enumerate}
\item[\rm(i)] $\mathbb{E}[L_{\rho}(G,\Theta)] = 0$ (respectively, $\mathbb{E}[L_{\rho}(G,\Theta)] < 0$);
\item[(ii)] $\mathbb{E}[\deg_G(\rho)] = 6$ (respectively, $\mathbb{E}[\deg_G(\rho)] > 6$);
\item[\rm(iii)] $G$ is almost surely VEL-parabolic (respectively, VEL-hyperbolic);
\item[\rm(iv)] $G$ is almost surely RCP-parabolic (respectively, RCP-hyperbolic);
\item[\rm(v)] $G$ is almost surely invariantly amenable (respectively, invariantly non-amenable).
\item[\rm(vi)]
In the parabolic case, if the degrees are uniformly bounded, then \(G\) is
almost surely recurrent. In the hyperbolic case, \(G\) is almost surely transient. 
\end{enumerate}
\end{theorem}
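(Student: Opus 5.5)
The plan is to reduce everything to the classical Angel--Hutchcroft--Nachmias--Ray (AHNR) dichotomy for one-ended unimodular random planar triangulations, with Theorem~\ref{thm:chr2deg} serving as the bridge from the angle data to the degree. First, (i)$\Leftrightarrow$(ii) is immediate from Theorem~\ref{thm:chr2deg}. Since $\mathbb E[\deg(\rho)]<\infty$, the identity $\mathbb E[L_\rho]=2\pi-\frac{\pi}{3}\mathbb E[\deg_G(\rho)]$ holds with finite quantities. Hence $\mathbb E[L_\rho]=0$ iff $\mathbb E[\deg]=6$, and $\mathbb E[L_\rho]<0$ iff $\mathbb E[\deg]>6$.

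To link (ii) to the graph-theoretic conditions, I would forget the marks and treat $(G,\rho)$ as a unimodular random rooted planar triangulation. Forgetting $\Theta$ preserves unimodularity, ergodicity, simplicity and one-endedness. For such triangulations, AHNR \cite{angel2016unimodular,angel2018hyperbolic} proves three things. First, $\mathbb E[\deg(\rho)]\ge 6$ always holds, since the average curvature is nonpositive. Second, the case $\mathbb E[\deg]=6$ is equivalent to invariant amenability, to CP-parabolicity, and to VEL-parabolicity. Third, the case $\mathbb E[\deg]>6$ is equivalent to invariant non-amenability, CP-hyperbolicity, VEL-hyperbolicity and transience; in the bounded-degree parabolic case recurrence also holds. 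Ergodicity ensures that each type property, being an invariant event, has probability $0$ or $1$; this justifies the ``almost surely'' statements. This yields (ii)$\Leftrightarrow$(iii)$\Leftrightarrow$(v)$\Leftrightarrow$(vi). For (vi), the remaining point is that transience and recurrence depend only on the graph $G$.

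The one genuinely new equivalence is (iii)$\Leftrightarrow$(iv), i.e.\ matching VEL type with RCP type rather than circle-packing type. Here I would invoke Theorem~\ref{infinite_existence}. Tameness and (Z1)--(Z3) give an RCP realizing $(G,\Theta)$. Theorem~\ref{thm-intro-rigidity-cp}, using (Z2) and (Z4), makes its carrier well defined: the carrier is $\mathbb C$ or $\mathbb D$, and this choice is unique up to M\"obius maps or similarities. I would then apply the uniformization statement quoted in the preliminaries from \cite{GeRCP}: under (Z1)--(Z4) and $\sup\Theta<\pi$, RCP type coincides with VEL type of the underlying graph. If a self-contained argument is wanted, I would prove this in a He--Schramm style. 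On the parabolic side, a VEL-parabolic graph whose RCP had carrier $\mathbb D$ would produce extremal metrics from the radii bounded below; this requires the local ring lemma, Lemma~\ref{ringlemma}, to compare Euclidean lengths with combinatorial ones. In the hyperbolic direction, one builds a metric with finite area and infinite length from radii in $\mathbb C$.

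Since the underlying ADT is the same object throughout, no mark-dependent obstruction arises. The ``respectively'' versions follow because both sides of each equivalence are dichotomies, exhaustive by $\mathbb E[\deg]\ge6$ and the existence of an RCP, so the parabolic equivalences imply the hyperbolic ones. The main obstacle I expect is the RCP--VEL identification when degrees are unbounded. The classical He--Schramm argument uses ring-lemma constants that are uniform only in bounded degree. If citing \cite{GeRCP} does not cover unbounded degree, I would first try to use VEL's insensitivity to local distortions together with tameness, which gives uniform control of overlaps. Failing that, I would try to realize $G$ with an auxiliary circle packing and compare the two embeddings by quasiconformal extension.
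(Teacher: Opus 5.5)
Your proposal is correct and follows the paper's proof essentially step for step. It gets (i)$\Leftrightarrow$(ii) from Theorem~\ref{thm:chr2deg}, (ii)$\Leftrightarrow$(iii), (v), (vi) from the Angel--Hutchcroft--Nachmias--Ray dichotomy applied to the unmarked triangulation \cite{angel2018hyperbolic}, and (iii)$\Leftrightarrow$(iv) from existence and rigidity of the RCP together with the RCP/VEL uniformization theorem of \cite{GeRCP}; your He--Schramm fallback is unnecessary, since that cited uniformization carries no bounded-degree hypothesis, and VEL was introduced precisely so that such comparisons avoid degree-dependent ring-lemma constants.
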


\begin{proof}

$\rm(i) \leftrightarrow \rm(ii)$: By Theorem~\ref{thm:chr2deg} the condition \(\mathbb{E}[L_\rho(G, \Theta)] = 0\) is equivalent to \(\mathbb{E}[\deg_G(\rho)] = 6\);  \(\mathbb{E}[L_\rho(G, \Theta)] < 0\) is equivalent to \(\mathbb{E}[\deg_G(\rho)] > 6\). 

$\rm(ii) \leftrightarrow \rm(iii)$ and $\rm(ii) \leftrightarrow \rm(v)$: By the dichotomy theorem for unimodular planar maps \cite{angel2018hyperbolic}, this provides equivalence of (ii) with (iii) and (v).

$\rm(iii) \leftrightarrow \rm(iv)$: Given properties (Z1) - (Z4), we know there is a rigid regular circle pattern realizing the angles by Theorems~\ref{infinite_existence} and~\ref{thm-intro-rigidity-cp}, and by \cite[Theorem 1.4]{GeRCP}, we know (iii) and (iv) are equivalent for \((G, \Theta)\). Therefore the five criteria (i) to (v) are equivalent.

$\rm(ii) \leftrightarrow \rm(vi)$: in the hyperbolic case, \(G\) is VEL-hyperbolic by
\({\rm (iii)}\), and hence transient by \cite[Lemma~3.14]{angel2018hyperbolic}. In the parabolic case, if the degrees are uniformly bounded, then VEL-parabolicity implies recurrence; see \cite[Theorem~1.4]{GeRCP}.
\end{proof}

\begin{corollary}
   Let \((G,\rho,\Theta)\) be an infinite, locally finite, simple,
one-ended, ergodic, tame unimodular random ADT satisfying
\emph{(Z1)--(Z4)} and $E[\deg(\rho)]< \infty$. If the mark $\Theta$ is constant, and $\E[\deg_G(\rho)] > 6$, then $G$ is a.s. RCP-hyperbolic.
\end{corollary}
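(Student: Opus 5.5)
This corollary follows directly from Theorem~\ref{thm:dichotomya}, and the constancy of $\Theta$ barely enters. The hypotheses listed in the corollary are exactly the standing hypotheses of Theorem~\ref{thm:dichotomya}: infinite, locally finite, simple, one-ended, ergodic, tame, unimodular, (Z1)--(Z4), and $\mathbb{E}[\deg(\rho)]<\infty$. Under these hypotheses, assumption (ii) in its hyperbolic form, $\mathbb{E}[\deg_G(\rho)]>6$, is equivalent to (iv) in its hyperbolic form, namely that $G$ is almost surely RCP-hyperbolic. I would invoke the chain (ii)$\Rightarrow$(iii)$\Rightarrow$(iv).

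As a consistency check, I would compute $L_\rho$ explicitly in the constant case $\Theta\equiv\Theta_0$. At every vertex the auxiliary Euclidean triangle has all three sides equal to $\sqrt{2+2\cos\Theta_0}$, so it is equilateral. Hence $\theta_v^f=\pi/3$ for every vertex--face pair, and
\[
L_\rho(G,\Theta)=2\pi-\frac{\pi}{3}\deg_G(\rho)
\]
holds pointwise, not only in expectation. Therefore $\mathbb{E}[L_\rho]<0$ if and only if $\mathbb{E}[\deg_G(\rho)]>6$. This is also visible from Theorem~\ref{thm:chr2deg}. It gives (i)$\Leftrightarrow$(ii) in the hyperbolic form without any mass transport argument.

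For the passage from (ii) to (iii), I would use the dichotomy of \cite{angel2018hyperbolic} for one-ended ergodic unimodular triangulations with finite expected degree: expected degree greater than $6$ gives almost sure VEL-hyperbolicity, that is, invariant non-amenability. For the passage from (iii) to (iv), I would use the existence and rigidity results, Theorems~\ref{infinite_existence} and~\ref{thm-intro-rigidity-cp}. Tameness gives $\sup\Theta<\pi$, so an RCP realization exists, and its carrier type agrees with VEL type by \cite[Theorem 1.4]{GeRCP}.

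The only point needing care is that a constant mark must actually satisfy (Z1)--(Z4). This is assumed in the corollary's hypotheses, so nothing needs to be checked. I would still note that (Z3) forces $\Theta_0<\pi/2$ whenever homologically non-adjacent pairs exist, and (Z4) then reads $\cos\Theta_0(1+\cos\Theta_0)\ge0$, which holds. There is no genuine obstacle: the corollary is a specialization of Theorem~\ref{thm:dichotomya}.
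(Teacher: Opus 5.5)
Your proposal is correct and essentially matches the paper's proof. The paper uses constancy of $\Theta$ to get $\theta_\rho^f=\pi/3$, hence $\mathbb{E}[L_\rho(G,\Theta)]=2\pi-\frac{\pi}{3}\mathbb{E}[\deg_G(\rho)]<0$, and then invokes Theorem~\ref{thm:dichotomy}. You enter the same equivalence chain of Theorem~\ref{thm:dichotomya} directly at item (ii), and your remark that constancy of $\Theta$ is not actually needed is right, since Theorem~\ref{thm:chr2deg} already gives (i)$\Leftrightarrow$(ii) for arbitrary admissible marks.
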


\begin{proof}
    By the definition of the geometric characteristic number:
    $$L_\rho(G, \Theta) = 2\pi - \sum_{f \ni \rho} \theta_\rho^f.$$
Since $\Theta$ is constant, all $\theta$ angles equal $\pi/3$ by definition. Therefore 
    $$\E[L_\rho(G, \Theta)] = 2\pi - \frac{\pi}{3}\E[\deg_G(\rho)] < 0,$$
    and we finish the proof by Theorem~\ref{thm:dichotomy}.
\end{proof}

At the end of this section, we prove Theorem~\ref{thm:polyhedral-dichotomy}.

\begin{proof}
Let $(G, \rho, \Theta)$ be the dual rooted ADT of $P$. By Proposition \ref{prop:thp-z12}, conditions (Z1) and (Z2) hold automatically, while (Z3) and (Z4) are assumed. Under the THP/ADT correspondence, the parabolic/hyperbolic type of $P$ agrees with the RCP type of $(G, \Theta)$, and $L_f(P)=L_\rho(G, \Theta)$. The conclusion therefore follows directly from Theorem \ref{thm:dichotomy}.
\end{proof}


\section{Polyhedrally sofic THPs}
\label{sec:polyhedrally-sofic}

We now study which unimodular infinite trivalent hyperbolic polyhedra
can arise as local limits of finite trivalent hyperbolic polyhedra.

Let \(P\) be a finite trivalent hyperbolic polyhedron, and let \(f\) be
a face chosen uniformly from \(F(P)\). Denote by
\[
    (G(P),\rho,\Theta_P)
\]
the finite rooted marked dual triangulation of $P$, where
\(\rho\) is the dual vertex corresponding to \(f\).

\begin{definition}[Polyhedrally sofic THP]
\label{def:polyhedrally-sofic}
A unimodular random rooted infinite trivalent hyperbolic polyhedron
\((P,f)\) is called \emph{polyhedrally sofic} if there exists a
sequence of finite trivalent hyperbolic polyhedra \(P_n\), with
\[
    |F(P_n)|\longrightarrow\infty,
\]
such that, after choosing \(f_n\) uniformly from \(F(P_n)\), the finite rooted marked dual triangulation of $P_n$ satisfy
\[
    \bigl(G(P_n),\rho_n,\Theta_{P_n}\bigr)
    \xrightarrow[n\to\infty]{}
    \bigl(G(P),\rho,\Theta_P\bigr)
\]
in the local topology of rooted marked graphs.

Equivalently, \((P,f)\) is polyhedrally sofic if its ADT is a Benjamini--Schramm limit of the finite rooted marked dual triangulations of uniformly
face-rooted finite trivalent hyperbolic polyhedra.
\end{definition}

Thus polyhedral soficity concerns the full marked polyhedral structure,
including both the local combinatorics and the dihedral-angle data. It is
strictly more restrictive than asking only that the underlying rooted
graph be a local weak limit of finite graphs.

The main ingredient is the following finite counterpart of the
unimodular Gauss-Bonnet formula.

\begin{proposition}[Finite polyhedral Gauss-Bonnet formula]
\label{prop:finite-polyhedral-gauss-bonnet}
Let \(P\) be a finite trivalent hyperbolic polyhedron. Then
\[
    \sum_{f\in F(P)}L_f(P)=4\pi.
\]
Consequently, if \(f\) is chosen uniformly from \(F(P)\), then
\[
    \mathbb E[L_f(P)]
    =
    \frac{4\pi}{|F(P)|}.
\]
Moreover, if \(\rho\) is the corresponding uniformly chosen vertex of
the dual triangulation \(G(P)\), then
\[
    \mathbb E[\deg_{G(P)}(\rho)]
    =
    6-\frac{12}{|F(P)|}.
\]
\end{proposition}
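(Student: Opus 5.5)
The plan is a direct finite double-counting argument, using the local angle identity \eqref{angle sum} together with Euler's formula for the sphere. Write $V(P)$, $E(P)$, $F(P)$ for the vertices, edges and faces of the finite trivalent polyhedron $P$. The boundary of $P$ is a topological sphere; ideal and hyperideal vertices are handled by truncation or by working combinatorially. Hence the combinatorial cell structure satisfies
\[
|V(P)|-|E(P)|+|F(P)|=2.
\]
Since $P$ is trivalent, each vertex meets exactly three edges and each edge has two endpoints, so $3|V(P)|=2|E(P)|$.

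First step: sum the definition of $L_f(P)$ over all faces:
\[
\sum_{f\in F(P)}L_f(P)=2\pi|F(P)|-\sum_{f\in F(P)}\sum_{v\in f}\theta_v^f .
\]
I will exchange the order of summation in the double sum. Each vertex $v$ lies in exactly three faces $f_1,f_2,f_3$, so the double sum equals $\sum_{v}(\theta_v^{f_1}+\theta_v^{f_2}+\theta_v^{f_3})$. By \eqref{angle sum} this is $\pi|V(P)|$. This uses that the auxiliary Euclidean triangle with sides $\sqrt{2+2\cos\Theta_{ij}}$ is nondegenerate at every vertex, which is exactly what makes $\theta_v^f$ well defined in the Definition; I take this as given from the setup. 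Therefore
\[
\sum_{f}L_f(P)=2\pi|F(P)|-\pi|V(P)|.
\]

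Second step: eliminate $|V(P)|$ using the combinatorial relations. From $3|V|=2|E|$ and Euler's formula we get $|V|-\tfrac32|V|+|F|=2$, so $|V|=2|F|-4$. Substituting gives
\[
\sum_f L_f(P)=2\pi|F|-2\pi|F|+4\pi=4\pi.
\]
Dividing by $|F(P)|$ gives the formula for $\mathbb E[L_f(P)]$ under a uniformly chosen face.

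Third step: the degree statement. In the dual triangulation $G(P)$, $\deg_{G(P)}(\rho)=\deg(f)$, and $\sum_f\deg(f)=2|E(P)|=3|V(P)|=6|F(P)|-12$. Dividing by $|F(P)|$ gives $\mathbb E[\deg_{G(P)}(\rho)]=6-12/|F(P)|$. As a consistency check, the same identity follows from the first formula: averaging the relation $\sum_f L_f = 2\pi|F|-\pi|V|$ reproduces $2\pi-\tfrac{\pi}{3}\mathbb E[\deg]=4\pi/|F|$.

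The only step needing care is justifying Euler characteristic $2$ when $P$ has ideal or hyperideal vertices. I would argue combinatorially: the dual $G(P)$ is a triangulation of the sphere, being the dual 1-skeleton of a compact convex polyhedron after truncation, so Euler's formula applies to the face/edge/vertex counts of $P$ directly. The remaining steps are routine bookkeeping.
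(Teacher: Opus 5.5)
Your proposal is correct and follows the paper's argument: sum the face defects, regroup the double sum by vertices using the angle-sum identity \eqref{angle sum} to get $2\pi|F(P)|-\pi|V(P)|$, then close with Euler's formula and $3|V(P)|=2|E(P)|$, with the degree identity coming from the same Euler count on the dual. You also supply the explicit bookkeeping ($|V(P)|=2|F(P)|-4$, $\sum_f\deg(f)=6|F(P)|-12$) and the spherical-Euler-characteristic justification for ideal and hyperideal vertices, which the paper's outline leaves implicit.
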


\begin{proof}
This is a classical result and we only outline the argument.

\noindent By definition,
\[
\sum_{f\in F(P)} L_f(P)
=
2\pi |F(P)|
-
\sum_{f\in F(P)}\sum_{v\in f}\theta_v^f.
\]
At each trivalent vertex, the three incident angles satisfy
\[
\theta_v^{f_1}+\theta_v^{f_2}+\theta_v^{f_3}=\pi.
\]
Summing this over all vertices and applying Euler's formula together with the trivalence condition immediately yields
\[
\sum_{f\in F(P)} L_f(P)=4\pi.
\]
The second assertion follows by averaging over a uniformly chosen face. For the last assertion, observe that the vertices and edges of the dual triangulation $G(P)$ are respectively the faces and edges of $P$; the desired formula then follows again from Euler's formula.
\end{proof}



The finite identity is independent of the dihedral angles. All angular
dependence cancels after summation, and the total characteristic \(4\pi\)
is determined solely by the spherical topology of the boundary of a
finite convex polyhedron.

We now show that this topological constraint forces every admissible
infinite polyhedral local limit to be parabolic.

\begin{theorem}[Benjamini--Schramm parabolicity]
\label{thm:polyhedral-bs-parabolicity}
Let \(P_n\) be a sequence of finite trivalent hyperbolic polyhedra, and
let \(f_n\) be chosen uniformly from \(F(P_n)\). Assume that
\[
    |F(P_n)|\longrightarrow\infty
\]
and that the finite rooted marked dual triangulations converge locally in
distribution:
\[
    \bigl(G(P_n),\rho_n,\Theta_{P_n}\bigr)
    \xrightarrow[n\to\infty]{\mathrm{law}}
    (G,\rho,\Theta).
\]
Suppose that \((G,\rho,\Theta)\) is almost surely infinite, locally
finite, simple, one-ended, tame, and satisfies
\emph{(Z1)--(Z4)}. Then \((G,\rho,\Theta)\) is unimodular and
\[
    \mathbb E[\deg_G(\rho)]=6,
    \qquad
    \mathbb E[L_\rho(G,\Theta)]=0.
\]
Moreover, every ergodic component of its law is RCP-parabolic and
invariantly amenable. Consequently, the infinite trivalent hyperbolic
polyhedron associated with the limiting RCP is almost surely
parabolic.

No moment or uniform-integrability assumption on the degrees is
required.
\end{theorem}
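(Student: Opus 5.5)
Unimodularity of the limit is standard: a uniformly rooted finite marked graph satisfies the mass transport principle exactly, and the principle passes to local weak limits of marked graphs (Aldous--Lyons, Benjamini--Schramm). The proof truncates at bounded radius, uses that transports depending on a bounded neighbourhood with bounded values are local, and applies monotone convergence. I will take this as known.

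\textbf{Expected degree.} I do \emph{not} try to pass $\mathbb E[\deg(\rho_n)]=6-12/|F(P_n)|\to 6$ (Proposition~\ref{prop:finite-polyhedral-gauss-bonnet}) to the limit. That would need uniform integrability, which the statement explicitly avoids.

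Fatou already gives $\mathbb E[\deg_G(\rho)]\le \liminf \mathbb E[\deg(\rho_n)]=6$, since degree is a continuous function of the local topology. So the expected degree is finite.

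For the reverse inequality $\mathbb E[\deg_G(\rho)]\ge 6$, I invoke the result of Angel--Hutchcroft--Nachmias--Ray \cite{angel2018hyperbolic}. For a one-ended, simple, infinite unimodular planar triangulation with finite expected degree, this gives $\mathbb E[\deg]\ge 6$, with equality iff invariantly amenable/parabolic. Their proof goes through the mass transport version of Euler's formula for one-ended maps. Hence $\mathbb E[\deg_G(\rho)]=6$, and Theorem~\ref{thm:chr2deg} gives $\mathbb E[L_\rho(G,\Theta)]=2\pi-\frac{\pi}{3}\cdot 6=0$.

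Alternatively, one could pass the defect identity to the limit directly: $\theta_v^f\in[0,\pi]$ and $L_{\rho_n}\le 2\pi$ are bounded above, so Fatou applied to $2\pi-L$ yields the same one-sided inequality. Either way, the lower bound must come from \cite{angel2018hyperbolic}.

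\textbf{Ergodic components.} The law of $(G,\rho,\Theta)$ decomposes into ergodic unimodular components. Each component inherits the almost-sure hypotheses: infinite, simple, one-ended, tame and (Z1)--(Z4). Tameness needs care, because the deterministic $\varepsilon$ is assumed for the limit law, so it holds almost surely in every component.

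Each component $\mu_i$ has $\mathbb E_{\mu_i}[\deg]\ge 6$ by the inequality above, and these average to $6$. Hence every component has expected degree exactly $6$. Theorem~\ref{thm:dichotomya}, (ii)$\Rightarrow$(iv),(v), then makes each component almost surely RCP-parabolic and invariantly amenable.

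By the RCP--THP correspondence and the identification of RCP type with THP type (via Theorem~\ref{thm:polyhedral-dichotomy}), the associated polyhedron is parabolic. Where the realization needs positive angles, I would note that the RCP realization from Theorems~\ref{infinite_existence} and~\ref{thm-intro-rigidity-cp} carrier $\mathbb C$ gives a single accumulation point.

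\textbf{Main obstacle.} The delicate step is establishing $\mathbb E[\deg_G(\rho)]=6$ without moment assumptions, which means citing the unconditional inequality $\mathbb E\deg\ge 6$ for one-ended unimodular triangulations. I need to check that its hypotheses (simple, one-ended, finite expected degree, which Fatou supplies) are exactly what we have. The fallback is to reprove it via the mass transport Euler characteristic argument: send mass from each vertex to its incident faces and use one-endedness to exclude the positive-curvature case.
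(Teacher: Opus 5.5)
Your proposal is correct and is essentially the paper's argument. Fatou's lemma (the paper uses the truncation $D\wedge M$ plus monotone convergence, which amounts to the same thing) gives $\mathbb E[\deg_G(\rho)]\le 6$. The lower bound $m(\xi)\ge 6$ on each ergodic component then forces $m(\xi)=6$ almost everywhere; the paper extracts that bound from its Theorem~\ref{thm:dichotomy}, and hence ultimately from the same Angel--Hutchcroft--Nachmias--Ray dichotomy you cite. After that, parabolicity, invariant amenability and $\mathbb E[L_\rho(G,\Theta)]=0$ follow exactly as you describe. One point to tidy: invoke $\mathbb E[\deg]\ge 6$ componentwise, as you do in your ergodic-decomposition paragraph, rather than for the possibly non-ergodic limit law, since the cited dichotomy is stated for ergodic laws.
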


\begin{proof}
Each finite marked graph rooted at a uniformly chosen vertex is
unimodular, and unimodularity is preserved under local weak convergence
\cite{AldousLyons2007}. Hence the limiting rooted marked
triangulation \((G,\rho,\Theta)\) is unimodular.

Set
\[
    D_n:=\deg_{G(P_n)}(\rho_n),
    \quad
    D:=\deg_G(\rho).
\]
For every \(M<\infty\), the truncated degree \(D\wedge M\) is a bounded
local observable. Local convergence therefore gives
\[
    \mathbb E[D\wedge M]
    =
    \lim_{n\to\infty}
    \mathbb E[D_n\wedge M].
\]
Since \(D_n\wedge M\leq D_n\), Proposition
\ref{prop:finite-polyhedral-gauss-bonnet} implies
\[
\begin{aligned}
    \mathbb E[D\wedge M]
    &\leq
    \lim_{n\to\infty}\mathbb E[D_n] \\
    &=
    \lim_{n\to\infty}
    \left(
        6-\frac{12}{|F(P_n)|}
    \right)
    =
    6.
\end{aligned}
\]
Letting \(M\to\infty\) and applying the monotone convergence theorem
yields
\begin{equation}\label{eq:bs-degree-upper}
    \mathbb E[D]\leq6.
\end{equation}

Let
\[
    \mu=\int \mu_\xi\,d\lambda(\xi)
\]
be the ergodic decomposition of the law of
\((G,\rho,\Theta)\), and set
\[
    m(\xi):=\mathbb E_{\mu_\xi}[D].
\]
Since \(\mathbb E_\mu[D]\leq6\), we have \(m(\xi)<\infty\) for
\(\lambda\)-almost every \(\xi\). The ADT dichotomy,
Theorem~\ref{thm:dichotomy}, applies to each such ergodic component.
Consequently, for almost every \(\xi\), either
\[
    m(\xi)=6
    \quad\text{and}\quad
    \mu_\xi\text{ is RCP-parabolic},
\]
or
\[
    m(\xi)>6
    \quad\text{and}\quad
    \mu_\xi\text{ is RCP-hyperbolic}.
\]
In particular,
\[
    m(\xi)\geq6
\]
for \(\lambda\)-almost every \(\xi\). Integrating over the ergodic
decomposition gives
\begin{equation}\label{eq:bs-degree-lower}
    \mathbb E_\mu[D]
    =
    \int m(\xi)\,d\lambda(\xi)
    \geq6.
\end{equation}
Combining \eqref{eq:bs-degree-upper} and
\eqref{eq:bs-degree-lower}, we obtain
\[
    \mathbb E[D]=6.
\]
Since \(m(\xi)\geq6\) almost surely and its average is exactly \(6\),
we must have
\[
    m(\xi)=6
\]
for almost every ergodic component. Theorem~\ref{thm:dichotomy} then
implies that every ergodic component is RCP-parabolic and invariantly
amenable.

Finally, the unimodular Gauss-Bonnet formula,
Theorem~\ref{thm:chr2deg}, gives
\[
\begin{aligned}
    \mathbb E[L_\rho(G,\Theta)]
    &=
    2\pi-\frac{\pi}{3}\mathbb E[D]=0.
\end{aligned}
\]
Through the RCP--THP correspondence, RCP-parabolicity is equivalent to
parabolicity of the associated infinite trivalent hyperbolic
polyhedron.
\end{proof}

\begin{remark}
Local convergence alone does not imply convergence of the unbounded
random variables \(D_n\). The preceding proof only uses the bounded
local observables \(D_n\wedge M\), which yield the upper bound
\[
    \mathbb E[\deg_G(\rho)]\leq6.
\]
The missing reverse inequality is supplied by the unimodular
parabolic--hyperbolic dichotomy on the ergodic components of the
limiting law.
\end{remark}

As an immediate consequence, the hyperbolic regime cannot be obtained
from finite trivalent hyperbolic polyhedra.

\begin{corollary}[No hyperbolic polyhedral local limits]
\label{cor:hyperbolic-not-polyhedrally-sofic}
Let \((P,f)\) be a tame unimodular random infinite trivalent hyperbolic
polyhedron whose dual ADT is almost surely infinite, locally finite,
simple, one-ended, and satisfies \emph{(Z1)--(Z4)}. If
\[
    \mathbb P(P\text{ is hyperbolic})>0,
\]
then \((P,f)\) is not polyhedrally sofic.

In particular, every ergodic hyperbolic unimodular THP satisfying these
assumptions is not polyhedrally sofic.
\end{corollary}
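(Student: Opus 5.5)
We argue by contradiction, reducing directly to Theorem~\ref{thm:polyhedral-bs-parabolicity}. Suppose \((P,f)\) is polyhedrally sofic. By Definition~\ref{def:polyhedrally-sofic} there are finite trivalent hyperbolic polyhedra \(P_n\) with \(|F(P_n)|\to\infty\) whose uniformly face-rooted marked dual triangulations \((G(P_n),\rho_n,\Theta_{P_n})\) converge locally in distribution to the dual ADT \((G,\rho,\Theta)\) of \((P,f)\). The standing assumptions of the corollary are exactly the hypotheses that Theorem~\ref{thm:polyhedral-bs-parabolicity} imposes on the limit: almost surely infinite, locally finite, simple, one-ended, tame, and satisfying (Z1)--(Z4). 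Tameness is meant with a deterministic \(\eps\), so it is a property of the law and transfers directly. No degree moment assumption is needed, since that theorem explicitly avoids one.

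Applying Theorem~\ref{thm:polyhedral-bs-parabolicity}, every ergodic component of the law of \((G,\rho,\Theta)\) is RCP-parabolic. Hence \((G,\Theta)\) is almost surely RCP-parabolic, being a mixture of almost surely parabolic components. Next we pass back to the polyhedron. By the RCP--THP correspondence together with the rigidity Theorem~\ref{thm-intro-rigidity-cp}, the realizing RCP is unique up to Möbius maps or similarities. So the conformal type of \(P\) is determined by \((G,\Theta)\): \(P\) is parabolic exactly when \((G,\Theta)\) is RCP-parabolic. Therefore \(P\) is almost surely parabolic, i.e.\ \(\mathbb P(P\text{ is hyperbolic})=0\), contradicting the assumption.

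For the ``in particular'' clause: if \((P,f)\) is ergodic and hyperbolic, then by ergodicity the event \(\{P \text{ hyperbolic}\}\) has probability \(0\) or \(1\). This event is root-invariant, because the type does not depend on the choice of root face. Being hyperbolic, it has probability \(1>0\), so the first part applies.

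The main point needing care is the identification of the polyhedron's type with the RCP type of its dual ADT. The polyhedron \(P\) is given a priori, whereas the RCP is produced by Theorem~\ref{infinite_existence}. One must check that \(P\) itself arises from that RCP, namely as the intersection of the half-spaces bounded by the circles its faces determine at infinity, up to isometry. I would invoke rigidity to conclude this, exactly as in the proof of Theorem~\ref{thm:polyhedral-dichotomy}. A secondary subtlety is that dihedral angles must be positive for the RCP--THP correspondence as stated; I would assume this holds for genuine polyhedra, or absorb it into the admissibility hypotheses.
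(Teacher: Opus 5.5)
Your proposal is correct and follows the paper's proof. Assume polyhedral soficity, apply Theorem~\ref{thm:polyhedral-bs-parabolicity} to the limiting dual ADT to get almost-sure parabolicity, and contradict $\mathbb P(P\text{ is hyperbolic})>0$. Your added care (checking that the hypotheses transfer, the ergodic-mixture step, the ``in particular'' clause, and identifying the type of $P$ with the RCP type of its dual ADT) only makes explicit what the paper leaves implicit, using the same THP/ADT type identification as in its proof of Theorem~\ref{thm:polyhedral-dichotomy}.
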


\begin{proof}
If \((P,f)\) were polyhedrally sofic, then its dual rooted ADT
would be a Benjamini--Schramm limit of the rooted marked dual triangulations of uniformly face-rooted finite
trivalent hyperbolic polyhedra. Theorem
\ref{thm:polyhedral-bs-parabolicity} would then imply that \(P\) is
parabolic almost surely, contradicting
\[
    \mathbb P(P\text{ is hyperbolic})>0.
\]
\end{proof}

It is natural to ask whether the converse also holds.
\begin{conjecture}\label{conj}
Let $(P,f)$ be an ergodic, tame, unimodular random rooted infinite trivalent hyperbolic polyhedron whose dual ADT is almost surely infinite, locally finite, simple, one-ended, and satisfies \emph{(Z1)--(Z4)}. Then
\[
    P \text{ is almost surely parabolic}
    \quad\Longrightarrow\quad
    (P,f)\text{ is polyhedrally sofic}.
\]
\end{conjecture}

This does not contradict the Aldous--Lyons conjecture, since polyhedral soficity is substantially stronger than ordinary soficity: the finite
approximations must arise from trivalent hyperbolic polyhedra and must
preserve the dihedral angles. Thus
\[
    \text{polyhedrally sofic}
    \quad\Longrightarrow\quad
    \text{sofic},
\]
but not conversely. Indeed, the underlying graph of a hyperbolic unimodular THP may still
be approximable by triangulations of higher-genus surfaces. For a
triangulation with \(N\) vertices on a closed orientable surface of
genus \(g\),
\[
    \frac{1}{N}\sum_v \deg(v)
    =
    6+\frac{12(g-1)}{N},
\]
so such approximations are not subject to the spherical
Gauss-Bonnet obstruction.

\section{Refined ring lemma for RCP}
\label{sec:refined}
Ring lemmas are a fundamental tool in circle packing theory.  They give
quantitative control on the ratios of neighboring radii and are crucial
for convergence and random-walk estimates.  In the classical
circle packing setting, such estimates go back to the work of Rodin-Sullivan
and have been used systematically in the study of infinite packings and
discrete conformal geometry
\cite{HeSchramm1995,RodinSullivan1987,Stephenson2005}.

In this section we will prove the refined ring lemma for \RCP s.
Compared with the local ring lemma, we quantify the radius ratio by degrees:
for a vertex $v$ with neighbors $v_i$ and $d_i=\deg(v_i)$, the ratio is
controlled by an exponential factor in $\sum_i d_i$.
Compared with the uniform ring lemma, we weaken the conditions by removing bounded degree assumption.

\begin{lemma}[Refined ring lemma for \RCP]\label{thm:re-ring-lemma}
Let $(G,\Theta)$ be a tame ADT satisfying (Z1)--(Z4), and assume that (Z2) is strengthened to (Z2$'$). Let $r:V\to(0,\infty)$ denote the radius function of an embedding in the plane. Then there exists a constant $C=C(\varepsilon_0,\varepsilon_1)$, where $\varepsilon_0$ is the constant in (Z2$'$) and $\varepsilon_1$ is the tameness constant in the angle upper bound $\Theta(e)\leq \pi-\varepsilon_1$, such that for every edge $u\sim v$,
\[
\frac{r(v)}{r(u)} > e^{-C\cdot S(u)},
\]
where $S(u)=\sum_{v'\sim u}\deg(v')$ is the \emph{flower degree} of $u$.
\end{lemma}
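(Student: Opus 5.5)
The plan is to argue by propagating a lower bound for ratios of radii around the flower of $u$, obtaining at each step a factor that is exponential only in the local degrees involved. We work in the planar embedding. Fix $u$ with neighbors $v_1,\dots,v_n$ in cyclic order ($n=\deg u$), write $d_i=\deg(v_i)$, and set $r_i=r(v_i)$. The first step is a two-circle estimate. For two adjacent circles $C_u,C_{v_i}$ meeting at angle $\Theta(uv_i)\le\pi-\varepsilon_1$, look at the arc of $C_u$ covered by the disk of $C_{v_i}$. Its angular size $\alpha_i$, seen from the center of $C_u$, is an explicit increasing function of $r_i/r_u$; the law of cosines gives it in terms of $\Theta(uv_i)$ and $r_i/r_u$. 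Tameness gives the crucial quantitative input: $\alpha_i\le K(\varepsilon_1)\, r_i/r_u$ when $r_i/r_u$ is small. So a tiny neighbor covers only a tiny arc of $C_u$. Note also that the angle $\phi_i$ at the center of $C_u$ in the triangle $(u,v_i,v_{i+1})$ of centers tends to $0$ when both $r_i/r_u$ and $r_{i+1}/r_u$ are small.

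The second step handles the case of a very small neighbor. Suppose $r_1/r_u<e^{-CS(u)}$; we want a contradiction. The flower of $u$ closes up, so $\sum_i\phi_i=2\pi$. The center angles are thus forced to be large somewhere, and some neighbor must be comparable to $r_u$. The mechanism is to propagate smallness from $v_1$ to $v_2$, and in the same way all the way around the flower. If $r_1$ is tiny, then $C_{v_2}$ must intersect $C_{v_1}$ at angle $\Theta(v_1v_2)$ while also intersecting $C_u$. To propagate we apply the same two-circle reasoning inside the flower of $v_1$, which has $d_1$ petals. The petal circles of $v_1$ close up around $C_{v_1}$, and by (Z2') they cannot all be huge relative to $r_1$ and still arrange into a closed chain. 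In particular, the chain $v_1\to v_2\to\cdots$ around $v_1$ cannot form a non-facial cycle of total angle near $(s-2)\pi$. This should bound the ratio $r_2/r_1$ from above by $e^{C' d_1}$, with the petal ratios along the flower of $v_1$ each bounded by a constant depending on $\varepsilon_0,\varepsilon_1$. Iterating around the $n$ neighbors of $u$ gives $r_i/r_u\le e^{-CS(u)}e^{C'(d_1+\dots+d_{i-1})}$. Choosing $C>C'$, every neighbor is then still smaller than $e^{-(C-C')S(u)}r_u$. In that case $\sum_i\phi_i\to0$, which contradicts $\sum_i\phi_i=2\pi$. Here (Z4) and (Z1) are used to guarantee that the centers of three mutually adjacent circles form a nondegenerate triangle with positive angle, so that the flower angles are genuinely the angles $\phi_i$.

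The third step, and the main obstacle, is the propagation estimate: an upper bound $r_{i+1}/r_i\le e^{C'd_i}$ in terms of $\deg(v_i)$ alone, with $C'$ depending only on $(\varepsilon_0,\varepsilon_1)$. My first attempt is a local argument at $v_i$. Suppose $r_{i+1}\gg r_i$. Then $C_{v_{i+1}}$ and $C_u$ together cover, through their intersections with $C_{v_i}$, arcs of $C_{v_i}$ whose angular size tends to the maximum allowed by $\Theta$. The remaining petals of $v_i$ must then fill a shrinking arc, so one of those $d_i-2$ petals is forced to be small relative to $r_i$ by a fixed factor. This gives a geometric decay with ratio at most $\deg$-dependent per petal, i.e.\ the exponential $e^{C'd_i}$. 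The role of (Z2') is to exclude the degenerate configuration in which the chain of petals nearly closes with total angle $(s-2)\pi$. In that configuration the covered-arc argument loses all uniformity, so the $\varepsilon_0$ margin is exactly what keeps $C'$ finite. If this local argument breaks down, the fallback is a compactness argument on flowers of bounded degree. This would give constants $c(d)$, and then a separate induction on $d$ would show that $\log c(d)$ grows at most linearly.
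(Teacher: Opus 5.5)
\textbf{There is a genuine gap: your propagation step is false.} Your overall architecture does match the paper's. You assume a petal $v_1$ with $r_1<e^{-CS(u)}r_u$, absorb the petals of $u$ one at a time at a cost of $e^{C'\deg}$ each, and contradict the closing-up of the flower (essentially Lemma~\ref{lemma2}). The problem is the bound $r_{i+1}/r_i\le e^{C'\deg(v_i)}$ for consecutive petals, proved locally in the flower of $v_i$ and applied in a fixed cyclic direction.

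Circle packings give a counterexample; they have $\Theta\equiv0$ and satisfy (Z1)--(Z4), (Z2$'$) and tameness. Let $u$ and $v_{i+1}$ be two tangent circles of radius $R$. Let $v_{i-1},v_i$ be the last two circles of a chain descending into their cusp, each tangent to both and to its predecessor, with $v_i$ of degree $3$ (neighbors $u,v_{i+1},v_{i-1}$). The $N$-th circle of such a chain has radius about $R/(2N^2)$. So $r_{i+1}/r_i$ is unbounded while $\deg v_i=3$, and the flower of $v_i$ is perfectly legitimate. Hence no argument local to $v_i$ can give your bound.

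Your covered-arc heuristic fails for the same reason. A neighbor meeting $C_{v_i}$ at angle $\Theta$ covers an arc of $C_{v_i}$ of angular size less than $2\Theta$, however large it is. For small angles, nothing forces the remaining petals into a shrinking arc.

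In this example the step $v_i\to v_{i-1}$ is harmless. The large radius of $v_{i+1}$ is paid for by $\deg v_{i+1}\ge N$, not by $\deg v_i$.

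\textbf{The missing idea is the paper's adaptive, two-sided growth.} One grows a cyclic interval $I$ of petals of $u$, starting from the tiny one, while maintaining $\sum_{v\in I}r_v\le C_0^{-S(u)+\sum_{v\in I}\deg v}\,r_u$. Lemma~\ref{lemma3}, together with the smallness of $I$ relative to $r_u$, gives $\min(r_{v_a},r_{v_b})\le 16C_3^{2\min(\deg v_a,\deg v_b)}\sum_{v\in I}r_v$ for the two boundary petals $v_a,v_b$. So one can always absorb \emph{some} endpoint, paying the degree of the newly absorbed petal, but you do not get to choose the direction.

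Lemma~\ref{lemma3} is a ring-type argument centred at the endpoint $v_j$, not at the already-small petal. It walks through the other petals $w_1,\dots,w_m$ of $v_j$. The polygon estimate of Lemma~\ref{lemma1}, applied to the quadrilaterals $v_i,u,v_j,w_q$, bounds each $r_{w_q}$ by the accumulated sum. This is where (Z2$'$) enters, with (Z3) covering a missing closing edge. So if both endpoints were much larger than the interval, all petals of $v_j$ other than $u$ (and possibly $v_i$) would be tiny. That contradicts Lemma~\ref{lemma2} or Lemma~\ref{lem:augment}.

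\textbf{Your compactness fallback does not close the gap either.} The available local ring lemma for RCPs, Lemma~\ref{ringlemma}, already needs a combinatorial ball of radius $2\pi/\epsilon$ rather than a single flower. Compactness alone also gives no linear growth rate for $\log c(d)$.
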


Before proving this lemma, we will prove several useful lemmas as in \cite{IIP2026}. In the lemmas and the proof below, \((G, \Theta)\) will be a tame ADT satisfying (Z1)--(Z4), and assume that (Z2) is strengthened to (Z2$'$); $u, v$, and $w$ will denote different vertices (and the corresponding circle in the RCP embedding), $r_u, r_v$, and $r_w$ the corresponding radius.

We note a lemma from previous work:

\begin{lemma}(\cite[Lemma 4.1]{IIP2026})\label{lemma1}
Let $D_1, \dots, D_n$ be disks with centers $O_1, \dots, O_n$ and radii $r_1, \dots, r_n$. Assume that $O_1, \dots, O_n$ forms a polygon. $D_i$ and $D_{i+1}$ intersect with an angle $\Theta_i$ (where $1 \le i \le n$ and $D_{n+1} = D_1$). Suppose that:

$$\sum_{i} (\pi - \Theta_i) > 2\pi + \varepsilon_0 .$$

Let $E$ be a set inside the polygon that intersects with all $D_i's$. Then there exists a constant $C = C(\varepsilon_0)$ such that:

$$(\textrm{diam}(E))^{-1/2} < C \sum_{i} r_i^{-1/2}.$$
\end{lemma}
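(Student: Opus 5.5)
The plan is to prove the inequality by a turning-angle argument at a single point of \(E\), where every error term will be of order \(\sqrt{\operatorname{diam}(E)/r_i}\). Both sides of the inequality scale like \(t^{-1/2}\) under a dilation by \(t\), so I may normalize \(d:=\operatorname{diam}(E)\) (or keep \(d\) general and track homogeneity). Fix a point \(p\in E\). Since \(E\) lies inside the polygon \(O_1\cdots O_n\), the directions from \(p\) to \(O_1,\dots,O_n\) wind exactly once around \(p\). Hence, writing \(\alpha_i=\angle O_ipO_{i+1}\) (signed, oriented consistently with the polygon),
\[
\sum_{i=1}^n \alpha_i = 2\pi .
\]
The goal is to compare each \(\alpha_i\) with \(\pi-\Theta_i\), which is exactly the angle \(\angle O_ixO_{i+1}\) at either intersection point \(x\) of \(\partial D_i\) and \(\partial D_{i+1}\).

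The key local estimate I will aim for is
\[
\alpha_i \;\ge\; (\pi-\Theta_i) - C_1\Bigl(\sqrt{d/r_i}+\sqrt{d/r_{i+1}}\Bigr)
\]
for a universal constant \(C_1\), at least whenever \(d/r_i\) and \(d/r_{i+1}\) are below a fixed threshold. The other case is trivial: if some \(r_i\le c\,d\), then \(d^{-1/2}\le c^{1/2}r_i^{-1/2}\) and the lemma holds already. To get the estimate, use that \(E\) meets both \(D_i\) and \(D_{i+1}\) and has diameter \(d\). Then \(p\) lies within distance \(d\) of each disk, so \(p\) sits in a thin neighbourhood of the region where the two disks interact. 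From this, choose the intersection point \(x\) of the two circles on the side of the polygon containing \(p\) (the one facing \(p\)). By elementary geometry of two circles of radii \(\ge r\) meeting at angle \(\Theta_i\le\pi-\varepsilon\), we should get \(|px|\lesssim \sqrt{d\,\max(r_i,r_{i+1})}\) in the relevant normalized sense; more precisely, the displacement transverse to each center direction is \(\lesssim\sqrt{d r_j}\). Moving the viewpoint from \(x\) to \(p\) then changes the direction to \(O_j\) by at most \(\lesssim |px|_{\perp}/r_j\lesssim\sqrt{d/r_j}\). This gives the displayed bound.

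Summing the local estimate over \(i\) and using \(\sum(\pi-\Theta_i)>2\pi+\varepsilon_0\) gives
\[
2\pi=\sum_i\alpha_i > 2\pi+\varepsilon_0-2C_1\sqrt d\sum_i r_i^{-1/2}.
\]
Hence \(d^{-1/2}<(2C_1/\varepsilon_0)\sum_i r_i^{-1/2}\). Combining this with the trivial case gives the lemma with \(C=\max(2C_1/\varepsilon_0,\,c^{1/2})\), independent of \(n\). The independence of \(n\) is exactly why the per-pair errors must be summable of the form \(r_i^{-1/2}\) rather than a uniform constant.

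The main obstacle is the local estimate. Two things need care there. First, the choice of the intersection point \(x\) and the sign convention for \(\alpha_i\) when the polygon of centers is not convex. Second, the case where \(p\) lies deep inside one disk but only near the other: there the transverse displacement bound \(\sqrt{d r_j}\) must come from the fact that \(E\), of diameter \(d\), still touches both disks. I would first treat the model case where both circles are replaced by their tangent lines, where \(\alpha_i=\pi-\Theta_i\) exactly up to an \(O(d/r)\) perturbation. Then I would control the curvature correction by the sagitta bound \(\text{(chord)}^2\sim r\cdot\text{(depth)}\), which is the source of the square root.
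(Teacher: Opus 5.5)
There is a genuine gap, in your key local two-disk estimate
\[
\alpha_i\ \ge\ (\pi-\Theta_i)-C_1\bigl(\sqrt{d/r_i}+\sqrt{d/r_{i+1}}\bigr).
\]
The rest of your argument is fine: the winding identity $\sum_i\alpha_i=2\pi$, the reduction to $r_i>c\,d$, and the final summation all work. The local estimate, however, admits no uniform $C_1$ once $\Theta_i$ may be as large as $\pi/2$.

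Knowing $\operatorname{dist}(p,D_i)\le d$ and $\operatorname{dist}(p,D_{i+1})\le d$ does not place $p$ near a point $x\in\partial D_i\cap\partial D_{i+1}$. The point $p$ can lie deep inside one disk, even next to its centre, while staying $d$-close to the other. Your viewpoint-shift bound $|px|_\perp/r_j$ tacitly assumes $|pO_j|\gtrsim r_j$. A displacement of size comparable to $r_j$ along the direction $xO_j$ brings $p$ next to $O_j$, and there the direction from $p$ to $O_j$ is uncontrolled.

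Concretely, take $\Theta_i=\pi/2$, $r_i=1$, $r_{i+1}=R$. Then $\operatorname{dist}(O_i,D_{i+1})=\sqrt{1+R^2}-R<1/(2R)$. So every $p$ with $|p-O_i|\le\delta$ lies in $D_i$ and within $\delta+1/(2R)$ of $D_{i+1}$. Yet $\alpha_i=\angle O_ipO_{i+1}$ comes arbitrarily close to $0$ as $p$ moves around $O_i$. Such $p$ are inside the polygon if, for example, the interior angle at $O_i$ is close to $\pi$. The deficit $(\pi-\Theta_i)-\alpha_i$ is then nearly $\pi/2$ while $d/r_i$ and $d/r_{i+1}$ tend to $0$. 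Your sagitta heuristic is correct only in the tangency case $\Theta_i=0$ of classical circle packings. There the $d$-neighbourhoods of the two disks meet only within $O(\sqrt{d\min(r_i,r_{i+1})})$ of the tangency point.

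Nor can such pairwise deficits always be absorbed by the other pairs. With the hypotheses exactly as listed (an arbitrary simple polygon of centres), the statement itself fails. Let $D_1$ be the unit disk centred at $0$. For $j=2,3,4$ let $D_j$ have radius $R$ and centre $(R-s)e^{i\psi_j}$, with $\psi_2=0$, $\psi_3=3\pi/5$, $\psi_4=6\pi/5$ and $0<s\ll1\ll R$. Then:
\begin{itemize}
\item all four disks contain $0$;
\item the quadrilateral $O_1O_2O_3O_4$ is simple, with a reflex angle $6\pi/5$ at $O_1=0$;
\item the points $\eta e^{3\pi i/5}$ with $0<\eta<s$ lie inside it and in every $D_j$;
\item $\pi-\Theta_1,\pi-\Theta_4\to\pi/2$ and $\pi-\Theta_2,\pi-\Theta_3\to 3\pi/5$, so $\sum_i(\pi-\Theta_i)$ is close to $11\pi/5>2\pi$.
\end{itemize}
Taking $E$ to be a tiny disc around such a point makes $(\operatorname{diam}E)^{-1/2}$ arbitrarily large, while $\sum_i r_i^{-1/2}\le 1+3R^{-1/2}$. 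Using centres $(R+s)e^{i\psi_j}$ instead gives a variant with all $\Theta_i$ acute and $\operatorname{diam}E=O(s)$. In your notation, at such a $p$ one has $\alpha_4+\alpha_1\approx 2\pi-6\pi/5$. This falls short of $(\pi-\Theta_4)+(\pi-\Theta_1)\approx\pi$ by $\pi/5$, with $d\approx 0$.

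The paper gives no proof of this lemma; it only imports it from its earlier work. Any proof must therefore use structure beyond the listed hypotheses, for instance convexity of the centre polygon, or the RCP flower configuration in which the lemma is actually applied. It must also treat the case of $p$ lying deep inside some $D_j$ by a non-pairwise argument. Near $O_j$ only the combined angle $\alpha_{j-1}+\alpha_j\approx 2\pi-\phi_j$ is controlled, where $\phi_j$ is the interior angle at $O_j$. Comparing it with $(\pi-\Theta_{j-1})+(\pi-\Theta_j)$ is where something like $\phi_j\le\pi$ must be used.
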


\begin{lemma}\label{lemma2}
There exists $\delta_2 = \delta(\varepsilon_1) < 1$ such that the following holds.  For every vertex $u$ and every neighbor $x\sim u$, if
\[
E_u(x)=\bigcup_{v\sim u,\,v\ne x}(D_v\backslash D_u),
\]
then
\[
\operatorname{diam} E_u(x)\ge\delta_2 r_u .
\]
In particular,
\[
\sum_{v \sim u} r_v - \max_{v \sim u} \{r_v\} \ge \frac{\delta_2}{2} r_u .
\]
\end{lemma}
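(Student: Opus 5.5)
The plan is to prove the diameter bound by a compactness argument that exploits the geometry of the flower of $u$ in the regular circle pattern, and then to read off the ``in particular'' statement from the definition of diameter.

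\textbf{Setting up the flower.} Normalize $r_u=1$ by scaling. Let the neighbors of $u$ be $v_1,\dots,v_n$ in cyclic order, with $x=v_n$. The disks $D_{v_1},\dots,D_{v_n}$ form a closed chain around $D_u$. Each meets $D_u$ at angle $\Theta(uv_i)\le\pi-\varepsilon_1$, and consecutive disks $D_{v_i},D_{v_{i+1}}$ overlap. Because the pattern is regular, the triangles $u v_i v_{i+1}$ tile a neighborhood of the center $O_u$. Hence the union of the $D_{v_i}$ together with $D_u$ covers a neighborhood of the circle $C_u=\partial D_u$.

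\textbf{Covering the circle.} Remove $D_{v_n}$. Each $D_{v_i}\cap C_u$ is an arc; call it $A_i$. Since $\Theta(uv_i)\le \pi-\varepsilon_1$, the circle $C_{v_i}$ crosses $C_u$ at an angle bounded away from tangency. The arcs $A_1,\dots,A_n$ cover $C_u$, with consecutive arcs overlapping. The arc $A_n$ cannot be all of $C_u$, since otherwise $D_u\subset D_{v_n}$ up to the boundary, which contradicts the embedding of an RCP. Therefore $A_1\cup\dots\cup A_{n-1}$ contains the complement $C_u\setminus A_n$, which is an arc of positive length.

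\textbf{The main step: a uniform lower bound on that arc.} I need the arc $C_u\setminus A_n$ to have length bounded below by a constant depending only on $\varepsilon_1$. I would argue as follows.
\begin{itemize}
\item The intersection angle of $C_{v_n}$ with $C_u$ is at most $\pi-\varepsilon_1$.
\item Suppose $A_n$ covered nearly all of $C_u$. Then the lens $D_u\cap D_{v_n}$ would contain almost all of $D_u$.
\item In that case the intersection point where $C_{v_n}$ crosses $C_u$ forces an exterior angle close to $0$, that is, an intersection angle close to $\pi$. This is the crossing angle at which one circle nearly contains the other, and it is excluded by tameness.
\end{itemize}
This bounds the angular length of $A_n$ by $2\pi-c(\varepsilon_1)$, using an explicit formula for the intersection points in terms of $r_{v_n}$ and $\Theta$. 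I expect this step to be the main obstacle. The bound must hold uniformly over all radii $r_{v_n}$, so I will check the two extreme regimes separately:
\begin{itemize}
\item As $r_{v_n}\to\infty$, the circle $C_{v_n}$ becomes a line crossing $C_u$ at angle $\Theta$, and it still leaves an arc of length at least $c(\varepsilon_1)$ outside.
\item As $r_{v_n}\to 0$, the arc $A_n$ is itself tiny.
\end{itemize}

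\textbf{Leaving $D_u$ and concluding the diameter bound.} Now take the points of $D_{v_i}\setminus D_u$, for $i<n$, that lie just outside $C_u$ near the two endpoints of the complementary arc. The endpoints of $C_u\setminus A_n$ lie on $C_{v_n}$ and inside the adjacent disks $D_{v_1}$ and $D_{v_{n-1}}$. A more robust choice is to use the middle of the arc: the point of $C_u\setminus A_n$ farthest from $A_n$ is covered by some $D_{v_i}$ with $i<n$, and crossing $C_u$ there, that disk pokes outside $D_u$. This gives points of $E_u(x)$ near two endpoints that are separated by a chord of length at least $2\sin(c/2)$. Alternatively, if the complementary arc is covered by a single disk, I will use the lens geometry at angle $\le\pi-\varepsilon_1$ to get a chord of $D_{v_i}\setminus D_u$ of length comparable to $\min(1,r_{v_i})$. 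The other endpoint then comes from a neighbor. Either way, $\operatorname{diam}E_u(x)\ge\delta_2 r_u$ with $\delta_2=\delta_2(\varepsilon_1)$.

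\textbf{The ``in particular'' statement.} Choose $x$ to be the neighbor with the largest radius. The set $E_u(x)$ is then a union of disks of radii $r_v$ for $v\ne x$, and this union is connected via the chain of overlapping disks. The diameter of a connected union of disks is at most the sum of their diameters, which is $2\sum_{v\ne x}r_v$. Combining this with the diameter bound gives
\[
\sum_{v\sim u}r_v-\max_{v\sim u} r_v\ \ge\ \frac{\delta_2}{2}\,r_u .
\]
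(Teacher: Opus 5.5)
\textbf{The gap: the petals need not cover $C_u$.} Your argument breaks at \emph{Covering the circle}. That the triangles $uv_iv_{i+1}$ tile a neighbourhood of $O_u$ does not imply that $D_u$ and the petals cover a neighbourhood of $C_u$. The claim that the arcs $A_i=D_{v_i}\cap C_u$ cover $C_u$ with consecutive overlaps is false in general.

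Suppose a face $uv_iv_{i+1}$ has $\Theta(uv_i)+\Theta(uv_{i+1})+\Theta(v_iv_{i+1})<\pi$. This is a hyperideal vertex of the polyhedron, which the paper explicitly allows. Then the three disks have no common point, so $A_i\cap A_{i+1}=\emptyset$, and the arc of $C_u$ between them lies in no petal. In the extreme case $\Theta\equiv 0$ (an ordinary circle packing such as the hexagonal one), each $A_i$ is a single tangency point, so $C_u\setminus A_n$ is almost entirely uncovered. This case is a tame ADT satisfying (Z1)--(Z4) and (Z2$'$), so it is within the lemma's scope.

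Your paragraph \emph{Leaving $D_u$} rests entirely on this covering: it uses that the endpoints of $C_u\setminus A_n$ lie in $D_{v_1}$ and $D_{v_{n-1}}$, and that its midpoint lies in some $D_{v_i}$. So in this regime you have not located any points of $E_u(x)$ at all. There is a second problem even where covering holds. The two endpoints of $C_u\setminus A_n$ both lie on $C_{v_n}$, hence within $2r_{v_n}$ of each other, so they are not ``separated by a chord of length at least $2\sin(c/2)$''. You would have to pair the midpoint with an endpoint.

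\textbf{The missing ingredient.} The paper's proof rests on the fact that, for every face $uvw$, the outer intersection point $P_{vw}$ of $C_v$ and $C_w$ lies outside $D_u$. This is where regularity and (Z1)--(Z4) enter, via the Ge--Jia--Yu--Zhou theory. Let $y,z$ be the two common neighbours of $u$ and $x$. Then $P_{xy},P_{xz}\in E_u(x)$. Since they lie on $C_x\setminus D_u$, their directions from $O_u$ make an angle less than $\max\{\Theta(ux),\pi/2\}\le\pi-\varepsilon_1$ with the ray $O_uO_x$ (taking $\varepsilon_1\le\pi/2$).

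The $\Theta(ux)$ part is your bound on $A_n$, and it needs no limiting-regime analysis. At $P\in C_u\cap C_x$, the triangle $O_uO_xP$ has angle $\pi-\Theta(ux)$ at $P$, so its angle at $O_u$ is less than $\Theta(ux)$.

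You should then work with the crescents $D_v\setminus D_u$ rather than with arcs of $C_u$. Consecutive crescents share the points $P_{v_iv_{i+1}}$, so the outer arcs of the petals $v\neq x$ form a path in $E_u(x)$ from $P_{xy}$ to $P_{xz}$. Together with the arc $C_x\setminus D_u$, this path closes up around $D_u$. Hence it turns through more than $2\varepsilon_1$ about $O_u$ while staying outside $D_u$. This gives two points of $E_u(x)$ at distance at least $r_u\sin\varepsilon_1$. The sweep is genuinely needed: $P_{xy}$ and $P_{xz}$ both lie on $C_x$, so they are within $2r_x$ of each other, which may be far smaller than $r_u$.

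Your derivation of the ``in particular'' inequality from the diameter bound is fine. It uses $E_u(x)\subseteq\bigcup_{v\neq x}D_v$ and the chain of intersecting disks.
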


\begin{proof}
    Since the RCP embedding exists, the local flower of every vertex is a non-degenerate cyclic configuration. In particular, the neighbors of $u$ occur in a genuine cyclic order around the circle of $u$, and degenerate cases with fewer than three complementary neighboring disks cannot occur.

    For each face $f_{uvw}$ that contains $u$ as a vertex, consider the outer intersection point $P_{vw}$ of $v$ and $w$ (by outer intersection point we mean the intersection point that lies on line $vw$ or is in different sides with $u$ with respect to line $vw$). By the properties (Z1)-(Z4) discussed in \cite{GeRCP}, $P_{vw}$ is outside the circle of $u$.
    
    Fix a neighbor $x\sim u$. Let $y$ be the vertex adjacent to both $x$ and $u$ such that the rotation from $e_{ux}$ to $e_{uy}$ is counterclockwise. Then the counterclockwise angle from $e_{ux}$ to $uP_{xy}$ would be:

    $$
    \angle xuP_{xy} < \angle xuP_{xu} < \pi - \varepsilon_1.
    $$

    With the same arguments, let $z$ be the vertex adjacent to both $x$ and $u$ and the rotation from $e_{ux}$ to $e_{uz}$ be clockwise. Then the clockwise angle from $e_{ux}$ to $uP_{xz}$, $\angle P_{xz}ux < \pi - \varepsilon_1$.

    \begin{figure}
        \centering
        \includegraphics[width=0.6\linewidth]{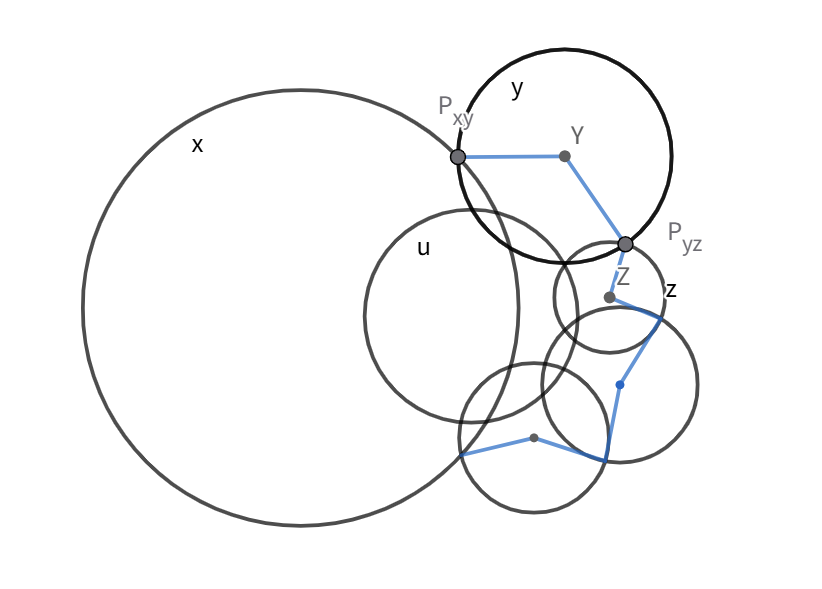}
        \caption{The polyline colored in blue}
        \label{fig:lemma4.3}
    \end{figure}

 Therefore, the angle from $uP_{xy}$ to $uP_{xz}$ is greater than $2\varepsilon_1$. Since both $P_{xy}$ and $P_{xz}$ belong to $E_u(x)$, we have
    \[
    \operatorname{diam}E_u(x)
    \ge 2r_u\sin(\min\{\varepsilon_1, \frac{\pi}{2}\}).
    \]
    This proves the diameter estimate after decreasing the constant, if necessary, to a number below $1$.

    The final assertion follows from the same cyclic chain. Indeed, if $x$ is chosen so that $r_x=\max_{v\sim u}r_v$, then the poly-line $P_{xy}yw_1\dots w_mzP_{xz}$ has length
    \begin{align*}
        l &= r_y + d_{yw_1} + \dots + d_{w_mz} + r_z\\
        &\le r_y + (r_y + r_{w_1}) + \dots + (r_{w_m} + r_{z}) + r_z \\
        &= 2\left(\sum_{v \sim u} r_v - r_x\right)\\
        &= 2\left(\sum_{v \sim u} r_v - \max_{v \sim u} \{r_v\}\right).
    \end{align*}
    Since this poly-line connects $P_{xy}$ to $P_{xz}$ counterclockwise with respect to $u$, its length is at least $2r_u\sin\varepsilon_1$. Taking, for instance, $\delta_2=\sin\varepsilon_1$ completes the proof.
\end{proof}

\begin{remark}
    The proof differs from \cite[Lemma 4.2]{IIP2026}.
\end{remark}

\begin{lemma}\label{lem:augment}
There exists a constant $\delta_{\mathrm{aug}}=\delta_{\mathrm{aug}}(\varepsilon_1)>0$
such that the following holds. Suppose that $u,a,b$ are pairwise adjacent
in the RCP embedding. Let
\[
E_{a,b}(u)
=
\bigcup_{\substack{v\sim a \text{ or } v\sim b\\ v\ne u,a,b}} D_v .
\]
Then
\[
\operatorname{diam} E_{a,b}(u)
\ge
\delta_{\mathrm{aug}}\max\{r_a,r_b\}.
\]
\end{lemma}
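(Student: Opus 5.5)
Without loss of generality assume $r_a\ge r_b$, so the goal is to exhibit two points of $E_{a,b}(u)$ at distance at least $\delta_{\mathrm{aug}} r_a$. The plan mirrors the proof of Lemma~\ref{lemma2}, with $a$ playing the role of the central vertex. Around $a$ the neighbors occur in a genuine cyclic order (the flower of $a$ is non-degenerate because the RCP embedding exists). This cyclic list contains $u$ and $b$, which are consecutive since $uab$ is a face. Let $b'$ be the neighbor of $a$ following $b$ on the side away from $u$, and let $u'$ be the neighbor of $a$ preceding $u$ on the side away from $b$. If $\deg(a)\ge 4$, then $b'\neq u'$, and both are distinct from $u,a,b$. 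If $\deg(a)=3$, then $b'=u'$ is the third vertex of the other face containing $ab$. In that case I would use instead the vertex $c\neq u$ forming the second face $abc$ over the edge $ab$, which is adjacent to both $a$ and $b$. In every case the disks $D_{b'},D_{u'}$ (or $D_c$) are contained in $E_{a,b}(u)$.

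The key step is to locate two outer intersection points on the circle of $a$ that are angularly separated. As in Lemma~\ref{lemma2}, for each face $axy$ at $a$, the outer intersection point $P_{xy}$ of $C_x$ and $C_y$ lies outside $C_a$, by the properties (Z1)--(Z4) from \cite{GeRCP}. I would consider the two faces at $a$ adjacent to the pair of faces $uab$ and $u'au$. These are the face $abb'$ on one side and the face $au'u''$ on the other, where $u''$ is the vertex preceding $u'$ (if $u''=b'$ the flower is small and one point suffices, as below).

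The argument of Lemma~\ref{lemma2} gives the angular bound: the angle at the center of $C_a$ subtended from the ray toward $b$ to the ray toward $P_{bb'}$ is less than $\pi-\varepsilon_1$. The same bound holds on the $u$ side. Hence the rays from the center of $a$ to $P_{bb'}$ and to the corresponding outer point on the other side of $u$ are separated by an angle of at least $\varepsilon_1$ (after combining the two complementary bounds as in Lemma~\ref{lemma2}). Both points lie outside $C_a$, and both lie in disks of $E_{a,b}(u)$, namely $D_{b'}$ and $D_{u'}$. This yields
\[
\operatorname{diam}E_{a,b}(u)\ \ge\ 2r_a\sin\bigl(\min\{\varepsilon_1/2,\pi/2\}\bigr).
\]
An alternative, simpler route is to apply Lemma~\ref{lemma2} directly at the vertex $a$ with the excluded neighbor $x=u$. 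That gives $\operatorname{diam}E_a(u)\ge\delta_2 r_a$ for $E_a(u)=\bigcup_{v\sim a,v\neq u}(D_v\setminus D_a)$. The only mismatch is that $E_a(u)$ contains $D_b$, which $E_{a,b}(u)$ excludes. So the real task is to redo that argument keeping only the chain from $b'$ around to $u'$, which avoids both $b$ and $u$.

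The main obstacle is exactly this removal of $D_b$, and it matters most when $\deg(a)$ is small, e.g. $\deg(a)=3$ or $4$, where the remaining chain consists of one or two disks. In that regime I would argue separately. One disk, say $D_c$, must meet $C_a$ at an angle $\Theta(ac)\le\pi-\varepsilon_1$. Then $D_c\setminus D_a$ contains the chord of $C_a$ cut by $C_c$, whose length is bounded below by a multiple of $\min(r_a,r_c)$ depending on $\varepsilon_1$. It remains to rule out $r_c\ll r_a$. For that I would use that $c$ is adjacent to both $a$ and $b$, together with (Z3)/(Z4), which prevent a tiny disk from closing the flower of $a$ when the angles are bounded away from $\pi$. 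If this fails, the fallback is to use the angular gap at $a$ from the bound $\angle < \pi-\varepsilon_1$ on both sides of the face $uab$, which forces the complementary arc covered by the remaining disks to have angular size at least $2\varepsilon_1$ minus the angle spanned by $D_b$. The delicate point will be controlling that last quantity.
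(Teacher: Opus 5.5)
There is a genuine gap, and it is the one your last paragraph anticipates. The step ``hence the rays to $P_{bb'}$ and to the outer point on the other side of $u$ are separated by at least $\varepsilon_1$'' does not follow from the argument of Lemma~\ref{lemma2}. There, both estimates are measured from the \emph{same} ray $ux$, and that is why the complementary arc exceeds $2\varepsilon_1$. In your configuration the bound for $P_{bb'}$ is measured from the ray $ab$, while the bound for the $u$-side witness is measured from the ray $au$ (or $au'$). So the arc of directions at $a$ running from $P_{bb'}$ through $b$ and $u$ to the other witness is only bounded by $2(\pi-\varepsilon_1)+\angle uab$. The complementary arc is then only bounded below by $2\varepsilon_1-\angle uab$, which need not be positive, since the face angle $\angle uab$ of the center triangle is not small in general.

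If you anchor both bounds at $au$ instead, you are left with $2\varepsilon_1$ minus the angular span of $D_b$ at $a$, as you note yourself. Nothing in the proposal controls that quantity. The case $\deg(a)=3$, where you must rule out $r_c\ll r_a$, is likewise only an intention, not an argument. Your fallback via Lemma~\ref{lemma2} with excluded neighbor $u$ fails, because $E_a(u)$ contains $D_b\setminus D_a$, which $E_{a,b}(u)$ excludes. You also assume without comment that $uab$ is a face. The statement allows a separating triangle, which the paper handles by deleting the enclosed component; this only shrinks $E_{a,b}(u)$.

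The missing idea is that $E_{a,b}(u)$ also contains the neighbors of $b$. So you need not work only inside the flower of $a$, and you never need to cut $D_b$ out of an angular interval at $a$. The paper makes two choices:
\begin{itemize}
\item $x$ is the neighbor of $a$ immediately preceding $u$, on the side away from $b$;
\item $y$ is the vertex across the edge $bu$ from $a$, so $buy$ is a face and $y\sim b$.
\end{itemize}
It then uses the two witnesses $P_{xu}\in D_x$ and $P_{uy}\in D_y$. Both lie on $C_u$, on either side of the contacts of $u$ with $a$ and $b$. The angular estimate at $a$ is therefore organized around the single sector cut out by $u$, of size at most $\pi-\varepsilon_1$, rather than around two different anchor rays separated by $D_b$. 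The one extra geometric input is that $P_{uy}$ lies outside $D_a$. The paper gets this from (Z2) applied to the outer triangle $ayu$ of the planar $K_4$ on $a,b,y,u$. Together with $P_{xu}\notin D_a$, this yields $|P_{xu}P_{uy}|\ge 2r_a\sin(\varepsilon_1/2)$.
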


\begin{proof}
We first prove this lemma for the case where $u, a, b$ forms a triangle
face in the RCP embedding.

It is enough to prove a lower bound by a constant multiple of $r_a$;
the same argument with $a$ and $b$ interchanged then gives the bound
with $r_b$, and the maximum follows.

Consider the flower of the circle $D_a$.  Since $u,a,b$ are pairwise
adjacent, the two disks $D_u$ and $D_b$ occur consecutively in the
cyclic order of the neighbors of $a$.  Let $x$ be the neighbor of $a$
immediately preceding $u$ in this cyclic order, and let $y$ be the
neighbor of $b$ immediately following $u$.  These vertices are different
from $u,a,b$, and hence the disks $D_x$ and $D_y$ are both contained in
$E_{a,b}(u)$.

For every face $rst$, denote by $P_{st}$ the outer intersection
point of the two neighboring disks $D_s$ and $D_t$, as in the proof of
Lemma~\ref{lemma2}.  By the same local RCP geometry used there, the
points $P_{xu}$ and $P_{uy}$ lie outside the circle $D_a$. Moreover, the
sector at $a$ cut out by the neighbor $u$ has
angular size at most $\pi-\varepsilon_1$. Hence, the complementary angle
from the ray $aP_{xu}$ to the ray $aP_{uy}$, measured through the chain
of neighbors different from $u$ and $b$, is at least $\varepsilon_1$.
The point $P_{xu}$ has a distance of at least $r_a$ from the center of
$D_a$ as in Lemma~\ref{lemma2}. For the point $P_{uy}$, if it lies inside the
disk $D_a$, the intersection graph of centers $a, b, y, u$ would form a planar $K_4$ with $b$ 
in the center. By angle condition (Z2) for the outer triangle $ayu$, all 
intersections of circles $D_y$ and $D_u$ lie outside the disk $D_a$, 
which leads to a contradiction. Therefore, the point $P_{uy}$ has a 
distance of at least $r_a$ from the center of $D_a$, and one gets
\[
 |P_{xu}P_{uy}|
 \ge 2r_a\sin(\varepsilon_1/2).
\]

\begin{figure}
    \centering
    \includegraphics[width=0.5\linewidth]{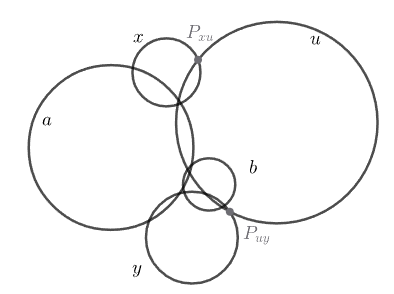}
    \caption{The case where $D_y$ and $D_a$ intersect.}
    \label{fig:lemma4_4}
\end{figure}

The points $P_{xu}$ and $P_{uy}$ belong respectively to $D_x$ and
$D_y$, and therefore both belong to $E_{a,b}(u)$.  Consequently
\[
 \operatorname{diam}E_{a,b}(u)
 \ge 2r_a\sin(\varepsilon_1/2).
\]
Interchanging $a$ and $b$ gives the same estimate with $r_b$ in place
of $r_a$.  Thus the lemma holds with, for instance,
\[
\delta_{\mathrm{aug}}=2\sin(\varepsilon_1/2),
\]
or with this constant decreased if one wants $\delta_{\mathrm{aug}}<1$.

For general cases where $u,a,b$ forms a separating triangle, remove the points and edges in the component enclosed by the separating triangle $uab$. The resulting circle pattern still satisfies the RCP and (Z1)-(Z4) conditions. The preceding argument therefore applies without change.
\end{proof}

\begin{lemma}\label{lemma3}
 Suppose that the neighborhood of $u$ is $N(u)=\{v_1,\dots,v_d\}$, in cyclic order, with indices understood modulo $d$. Then there exists $C_3 = C(\varepsilon_0, \varepsilon_1)$ with the following property. For every non-empty proper cyclic interval
\[
I(a,b)=\{v_{a+1},\ldots,v_{b-1}\},
\qquad a<b<a+d,
\]
with boundary vertices $v_a$ and $v_b$, one has
\[
\left(\sum_{v\in I(a,b)} r_v\right)^{-1/2}
< C_3^{\min(\deg v_a,\deg v_b)}
\left(r_u^{-1/2}+r_{v_a}^{-1/2}+r_{v_b}^{-1/2}\right).
\]
\end{lemma}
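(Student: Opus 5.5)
The plan is an induction on \(k=\min(\deg v_a,\deg v_b)\), modelled on the ideal-case argument of \cite{IIP2026}. Lemma~\ref{lemma1} is the engine: it bounds \((\operatorname{diam}E)^{-1/2}\) by a sum of \(r^{-1/2}\) over the disks of a chain surrounding \(E\). Lemma~\ref{lem:augment} converts a diameter lower bound into a radius lower bound whenever the chain runs through a neighbor's flower.

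Set \(S_I=\sum_{v\in I(a,b)}r_v\). The union \(\bigcup_{v\in I}D_v\) is connected, since consecutive flower disks intersect. Hence its diameter is at most \(2S_I\), and it suffices to prove
\[
(\operatorname{diam}\textstyle\bigcup_{v\in I}D_v)^{-1/2}\lesssim C_3^{k}(r_u^{-1/2}+r_{v_a}^{-1/2}+r_{v_b}^{-1/2}).
\]

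The key step is to find a closed chain of disks around the interval \(I\) satisfying the hypothesis of Lemma~\ref{lemma1}. Assume without loss of generality that \(\deg v_a\le\deg v_b\). The chain consists of three pieces:
\begin{itemize}
\item the disk \(D_u\);
\item the disk \(D_{v_a}\) together with the neighbors of \(v_a\) lying outside the flower of \(u\), namely the arc of the link of \(v_a\) from \(v_{a+1}\) around to the far side;
\item the corresponding disks near \(v_b\), or simply \(D_{v_b}\) itself.
\end{itemize}
This chain bounds a simple closed curve in \(G\) that is not a face boundary, so (Z2\('\)) gives \(\sum\Theta(e_i)\le(s-2)\pi-\varepsilon_0\). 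Rearranged, this says \(\sum(\pi-\Theta_i)\ge 2\pi+\varepsilon_0\), which is exactly the hypothesis of Lemma~\ref{lemma1}. Applying Lemma~\ref{lemma1} with \(E=\bigcup_{v\in I}D_v\) (suitably intersected with the polygon) yields a bound by \(r_u^{-1/2}+r_{v_a}^{-1/2}+r_{v_b}^{-1/2}\) plus the \(r^{-1/2}\) of the extra link disks around \(v_a\).

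Those extra link vertices form a cyclic interval in the flower of \(v_a\), bounded by \(u\)-adjacent vertices, and their number is at most \(\deg v_a\). To control them, I would apply the same estimate recursively around the center \(v_a\). The cheaper route is to use Lemma~\ref{lem:augment} and Lemma~\ref{lemma2}, which show that the union of these link disks has diameter \(\gtrsim\max(r_u,r_{v_a})\). Combining this with the crude bound that a union of \(m\) disks of total radius \(R\) has some single radius \(\ge R/m\), I would bound each extra \(r^{-1/2}\) by \(C\,r_{v_a}^{-1/2}\) times a factor. Each step along the link of \(v_a\) costs a constant factor, and iterating over at most \(\deg v_a\) steps produces the factor \(C_3^{\deg v_a}\).

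The main obstacle is this last step. A single circle among the extra link disks may be tiny, in which case a naive bound on \(r^{-1/2}\) fails. I would handle it by choosing the chain adaptively: walk around the link of \(v_a\), and whenever a disk is small, replace it by its two neighbors on the link. This replacement is justified by Lemma~\ref{lemma2}, since \(\sum r-\max r\ge\frac{\delta_2}{2}r\) shows that a small disk's neighbors carry comparable mass. Each replacement multiplies the constant by a fixed amount depending only on \(\varepsilon_0,\varepsilon_1\), and at most \(\deg v_a\) replacements occur, which gives the exponential factor \(C_3^{\min(\deg v_a,\deg v_b)}\). Degenerate cases (\(|I|=1\), or \(v_a\) adjacent to \(v_b\)) reduce directly to Lemma~\ref{lem:augment}.
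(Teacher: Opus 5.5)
You have the right ingredients: Lemma~\ref{lemma1}, Lemma~\ref{lemma2}, Lemma~\ref{lem:augment}, the link of one boundary vertex, and one constant factor per link step. But there is a genuine gap exactly at the step you flag as the main obstacle, and your proposed repair does not close it.

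Lemma~\ref{lemma1} bounds $(\operatorname{diam}E)^{-1/2}$ from \emph{above} by $\sum_i r_i^{-1/2}$ over the whole surrounding chain. So a single application with the chain $D_u$, $D_{v_a}$, the link disks $D_{w_1},\dots,D_{w_m}$ of $v_a$, $D_{v_b}$ helps only if every $r_{w_\ell}$ is bounded below in terms of $r_u,r_{v_a},r_{v_b}$ and $\deg v_a$. That is a ring-type lower bound at the center $v_a$, which is what this lemma is used to prove. Moreover, the bound eventually obtained, Lemma~\ref{thm:re-ring-lemma}, depends on the flower degree of $v_a$, hence on the $\deg w_\ell$, not only on $\deg v_a$.

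The adaptive replacement is not legitimate. If a tiny $D_{w_\ell}$ is dropped, $D_{w_{\ell-1}}$ and $D_{w_{\ell+1}}$ need not intersect, so Lemma~\ref{lemma1} no longer applies. Detouring through the flower of $w_\ell$ instead costs a factor per neighbor of $w_\ell$. Lemma~\ref{lemma2} only says that the petals of a flower, other than the largest, carry total radius at least $\frac{\delta_2}{2}$ times the radius of the center; it says nothing about the link neighbors of a small disk. Likewise, ``some disk has radius $\ge R/m$'' bounds one disk from below, not each of them. For the same reason, $|I|=1$ is not a degenerate case settled by Lemma~\ref{lem:augment}: a lower bound on the single radius $r_{v_{a+1}}$ is the heart of the statement.

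The missing idea, which is how the paper argues, is to reverse the logic. Argue by contradiction, so that small link disks become the mechanism rather than the obstacle, and apply Lemma~\ref{lemma1} one link disk at a time. Put $A=r_u^{-1/2}+r_{v_a}^{-1/2}+r_{v_b}^{-1/2}$ and, with your normalization $\deg v_a\le\deg v_b$, suppose $S_I^{-1/2}\ge C_3^{\deg v_a}A$. For $q\le m\le\deg v_a-2$, the four-disk chain $D_{v_b},D_u,D_{v_a},D_{w_q}$ encloses $E\cup D_{w_1}\cup\dots\cup D_{w_{q-1}}$. This chain is admissible by (Z2$'$), or by (Z3) and tameness when the closing edge is absent. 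It gives
\[
\Bigl(S_I+\sum_{\ell<q}r_{w_\ell}\Bigr)^{-1/2}\le 2C\bigl(A+r_{w_q}^{-1/2}\bigr).
\]
Inductively the left side is at least $4CA$, so this yields an \emph{upper} bound $r_{w_q}\le 16C^2\bigl(S_I+\sum_{\ell<q}r_{w_\ell}\bigr)$. Thus the accumulated mass grows by at most a factor $C_3^2$ per step, and this is the real source of $C_3^{\deg v_a}$.

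After $m$ steps, all petals of $v_a$ other than $u$ (and $v_b$, if $v_a\sim v_b$) have total radius at most $C_3^{-4}A^{-2}\le C_3^{-4}r_{v_a}$. Their union therefore has diameter $<\min\{\delta_2,\delta_{\mathrm{aug}}\}\,r_{v_a}$, contradicting Lemma~\ref{lemma2} at the center $v_a$, or Lemma~\ref{lem:augment} when $v_a\sim v_b$. Each application of Lemma~\ref{lemma1} involves only $u,v_a,v_b$ and one new disk, which is why no degree other than $\deg v_a$ enters.
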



\begin{proof}
After replacing the cyclic labeling by an equivalent one if necessary,
we may write the given interval as $I(a,b)=\{v_{i+1},\ldots,v_{j-1}\}$
with $1\leq i<j\leq d$ and boundary vertices $v_i$ and $v_j$.
Put
\[
A=r_u^{-1/2}+r_{v_i}^{-1/2}+r_{v_j}^{-1/2},
\qquad
R_0=\sum_{k=i+1}^{j-1}r_{v_k},
\]
and let $E$ be the union of the disks $v_{i+1},\ldots,v_{j-1}$.
It suffices to prove the estimate with $\deg v_j$ in place of the
minimum, since the same argument applied in the opposite cyclic direction
gives the estimate with $\deg v_i$.

List, in cyclic order around $v_j$, the neighbors of $v_j$ that lie on
the side opposite to $E$ and different from $u$ and $v_i$ as
$w_1,\ldots,w_m$, where $w_1$ is adjacent to $E$ and $w_m$ is adjacent to $u$ or $v_i$. Thus
\[
m\leq \deg v_j-2.
\]
Let $E'$ be the union of $E$ and the circles adjacent to $v_j$ but not in the set $\{u\} \cup \{v_{i+1},\dots,v_{j-1}\} \cup \{w_1,\dots,w_m\}$: $E' = E \cup \bigcup_{w\sim v_j,w \notin \{u\} \cup \{v_{i+1},\dots,v_{j-1}\} \cup \{w_1,\dots,w_m\}} D_w \backslash D_{v_j} $. Since $E'$ lies in the convex hull of $E$ (more precisely, the convex hull of disk $D_{v_{j-1}}$ and the disk $D_{v_k}$ of the smallest index that intersects with $D_{v_j}$ in $\{v_{i+1},\dots,v_{j-1}\}$), $\text{diam}(E') = \text{diam}(E) \leq 2R_0$.

For every $1\leq q\leq m$, the polygon bounded by
$v_i,u,v_j,w_q$ in cyclic order satisfies the angle hypothesis of
Lemma~\ref{lemma1} from the condition (Z2'); if the closing edge is not present, the same estimate
follows from \cite[Remark following Lemma 4.1]{IIP2026} using (Z3) and
$\Theta\leq \pi-\varepsilon_1$ since one of the pairs $(uv_i, uv_j)$ and $(uv_j,v_jw_q)$ is homologically non-adjacent. Consequently, there is a constant
$C=C(\varepsilon_0,\varepsilon_1)>1$ such that, for all $q$,
\begin{equation}\label{eq:lemma3-one-step}
\left(R_0+\sum_{\ell=1}^{q-1}r_{w_\ell}\right)^{-1/2}
\leq 2\left(\text{diam} (E') + 2\sum_{\ell=1}^{q-1}r_{w_\ell}\right)^{-1/2}
\leq 2C\left(A+r_{w_q}^{-1/2}\right).
\end{equation}
Choose once and for all
\[
C_3=\max\left\{4C + 1,\frac{64C^2}{\delta_2},\frac{64C^2}{\delta_{\mathrm{aug}}}\right\},
\]
where $\delta_2$ is the constant from Lemma~\ref{lemma2} and $\delta_{\mathrm{aug}}$ is the constant from Lemma~\ref{lem:augment}.  Suppose, toward a
contradiction, that
\[
R_0^{-1/2}\geq C_3^{\deg v_j}A.
\]
We prove by finite induction on $q=0,1,\ldots,m$ that
\begin{equation}\label{eq:lemma3-induction}
\left(R_0+\sum_{\ell=1}^{q}r_{w_\ell}\right)^{-1/2}
\geq C_3^{\deg v_j-q}A.
\end{equation}
The case $q=0$ is the assumption.  If \eqref{eq:lemma3-induction} holds
for $q-1$, then \eqref{eq:lemma3-one-step} gives
\[
r_{w_q}^{-1/2}
\geq (2C)^{-1}\left(R_0+\sum_{\ell=1}^{q-1}r_{w_\ell}\right)^{-1/2}-A
\geq \frac{1}{4C}\left(R_0+\sum_{\ell=1}^{q-1}r_{w_\ell}\right)^{-1/2},
\]
because $C_3\geq 4C$. Hence
\[
r_{w_q}
\leq 16C^2\left(R_0+\sum_{\ell=1}^{q-1}r_{w_\ell}\right)
\leq (C_3^2-1)\left(R_0+\sum_{\ell=1}^{q-1}r_{w_\ell}\right),
\]
and \eqref{eq:lemma3-induction} follows for $q$.  This proves the
finite induction.

Taking $q=m$ and using $m\leq \deg v_j-2$, we obtain
\[
\text{diam}(E')+2\sum_{\ell=1}^{m}r_{w_\ell}\leq 2\left(R_0+\sum_{\ell=1}^{m}r_{w_\ell}\right)\leq 2C_3^{-4}A^{-2}
\leq 2C_3^{-4}r_{v_j}<\min\{\delta_2, \delta_{\mathrm{aug}}\} r_{v_j}.
\]

There are two cases:

If $v_i$ and $v_j$ are adjacent, then the disks counted on the left include all neighbors of $v_j$ and $u$ except $v_i$. This contradicts Lemma~\ref{lem:augment}.

If $v_i$ and $v_j$ are not adjacent, the disks counted on the left include all neighbors of $v_j$ except
possibly the largest one $u$.  This contradicts Lemma~\ref{lemma2}, applied
with the center $v_j$. Hence both cases lead to contradiction.

Therefore,
\[
R_0^{-1/2}<C_3^{\deg v_j}A.
\]
Repeating the same argument with $v_i$ and $v_j$ interchanged gives the
claimed bound with $\min(\deg v_i,\deg v_j)$.
\end{proof}

\begin{proof}[Proof of Lemma~\ref{thm:re-ring-lemma}]

Write the neighbors of $u$ as $N(u)=\{v_1,\ldots,v_d\}$ in cyclic order,
with indices understood modulo $d$.  If $a<b<a+d$, set
\[
I(a,b)=\{v_{a+1},\ldots,v_{b-1}\},\qquad
R(a,b)=\sum_{v\in I(a,b)}r_v,
\]
where $v_{q+d}=v_q$.  Thus $I(a,b)$ is the open cyclic interval from
$v_a$ to $v_b$.

Let
\[
C_0=\max\left\{2,\frac{64C_3^2}{\delta_2},\delta_2^{-1}\right\}.
\]
We claim that no neighbor $v_i$ can satisfy
\[
r_{v_i}\leq C_0^{-S(u)}r_u.
\]
Assume otherwise.  Start with the cyclic interval
$(a_0,b_0)=(i-1,i+1)$, so that $I(a_0,b_0)=\{v_i\}$.  Then
\[
R(a_0,b_0)
\leq C_0^{-S(u)}r_u
\leq C_0^{-S(u)+\sum_{v\in I(a_0,b_0)}\deg v}r_u.
\]
We prove the following finite induction.  Suppose that
\[
R(a,b)\leq C_0^{-S(u)+\sum_{v\in I(a,b)}\deg v}r_u
\]
and that $I(a,b)$ is not yet the complement of a single neighbor of
$u$.  Let the two adjacent outside vertices be $v_a$ and $v_b$.  If both
extensions failed, namely if
\[
R(a-1,b)>C_0^{-S(u)+\sum_{v\in I(a-1,b)}\deg v}r_u
\]
and
\[
R(a,b+1)>C_0^{-S(u)+\sum_{v\in I(a,b+1)}\deg v}r_u,
\]
then the two new boundary radii would satisfy
\[
r_{v_a}> (C_0^{\deg v_a}-1)R(a,b),
\qquad
r_{v_b}> (C_0^{\deg v_b}-1)R(a,b).
\]
On the other hand, Lemma~\ref{lemma3} applied to the interval $I(a,b)$
implies
\[
\min(r_{v_a},r_{v_b})
\leq 16C_3^{2\min(\deg v_a,\deg v_b)}R(a,b),
\]
which contradicts the choice of $C_0$.  Hence at least one of the two
extensions $(a-1,b)$ and $(a,b+1)$ preserves the displayed bound.

Starting from $(a_0,b_0)$ and applying this step exactly $d-2$ times,
we obtain a cyclic interval whose complement consists of one vertex,
say $w$.  Therefore
\[
\sum_{v'\sim u,\,v'\ne w}r_{v'}
\leq C_0^{-S(u)+\sum_{v'\sim u,\,v'\ne w}\deg v'}r_u
=C_0^{-\deg w}r_u
\leq C_0^{-2}r_u
<\delta_2 r_u,
\]
contradicting Lemma~\ref{lemma2} applied with center $u$.  Thus
$r_v>C_0^{-S(u)}r_u$ for every $v\sim u$, and the lemma follows with
$C=\log C_0$.

\end{proof}

\begin{remark}
    If we also assume that (Z3) is strengthened to a uniform condition (Z3'), the ring ratio can be shown to be bounded by $e^{-C\deg(u)}$ as the circle packing case.
\end{remark}

\section{The probabilistic boundary and face random walk of THP}
\label{sec:boundary}

Based on the refined Ring Lemma for RCP established above, we now identify
the RCP boundary with the Poisson boundary of the face random walk. 
For general background on Poisson boundaries, entropy, and random
walks on graphs and groups, see \cite{KaimanovichVershik1983,Woess2000}.
The proof follows the circle-packing boundary theory for unimodular hyperbolic
triangulations
\cite{angel2016unimodular,angel2018hyperbolic}
and the corresponding ideal-polyhedral argument in \cite{IIP2026}.
We include the main intermediate statements in order to indicate precisely
where the refined Ring Lemma enters.

Assume throughout this section that the unimodular ADT
\((G,\rho,\Theta)\) is simple, one-ended, tame, and RCP-hyperbolic, and
that conditions \({\rm (Z1)}\), \({\rm (Z3)}\), \({\rm (Z4)}\), and the
strengthened cycle condition \({\rm (Z2')}\) hold. Assume further that
\begin{equation}\label{eq:boundary-third-moment0}
    \E[\deg_G(\rho)^3]<\infty.
\end{equation}
For \(u\in V(G)\), let
\[
    S(u):=\sum_{w\sim u}\deg_G(w)
\]
be the flower degree of \(u\).

Let \(C=(z,r)\) be an RCP of \((G,\Theta)\) in \(\D\), where \(z(v)\)
and \(r(v)\) denote the Euclidean center and radius of the circle
corresponding to \(v\). We write \(z_h(v)\) and \(r_h(v)\) for the
corresponding hyperbolic center and hyperbolic radius. Let
\((X_n)_{n\geq0}\) be simple random walk on \(G\), started from
\(X_0=\rho\). Under the THP/ADT duality, this is precisely the face random
walk on the corresponding trivalent hyperbolic polyhedron.

For a fixed realization of \((G,\rho,\Theta)\), we denote by
\(\mathbb P_v^G\) and \(\mathbb E_v^G\) the law and expectation of the
random walk started from \(v\).

\subsection{Reversibility and estimates}

We first record two consequences of unimodularity that will be used below.

\begin{lemma}[Integrability]
\label{lem:flower-integrability}
Assume that
\begin{equation}
    \E\bigl[\deg_G(\rho)^3\bigr]<\infty.
    \label{eq:boundary-third-moment}
\end{equation}
Then
\[
    \E[S(\rho)]
    =
    \E\bigl[\deg_G(\rho)^2\bigr]
    <\infty,
\]
and
\[
    \E\bigl[\deg_G(\rho)S(\rho)\bigr]
    \le
    \E\bigl[\deg_G(\rho)^3\bigr]
    <\infty.
\]
\end{lemma}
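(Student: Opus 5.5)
The first identity comes from a mass transport that redistributes degrees. I will use the function
\[
F(G,u,w)=\deg_G(w)\,\mathbf 1_{\{u\sim w\}},
\]
which is nonnegative and Borel on $\mathcal G_{\bullet\bullet}$. The mass sent from the root is
\[
\sum_{w}F(G,\rho,w)=\sum_{w\sim\rho}\deg_G(w)=S(\rho).
\]
The mass received at the root is
\[
\sum_{w}F(G,w,\rho)=\sum_{w\sim\rho}\deg_G(\rho)=\deg_G(\rho)^2.
\]
The mass transport principle holds for nonnegative functions without any integrability assumption. It therefore gives $\E[S(\rho)]=\E[\deg_G(\rho)^2]$ as an identity in $[0,\infty]$. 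Finiteness then follows from $\deg^2\le\deg^3$, because degrees are positive integers, together with the third-moment hypothesis.

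For the second bound I will transport mass $F(G,u,w)=\deg_G(u)\deg_G(w)\mathbf 1_{\{u\sim w\}}$. Since this function is symmetric in $u$ and $w$, the mass transport principle gives nothing new on its own. The useful observation is that the root sends
\[
\sum_{w\sim\rho}\deg_G(\rho)\deg_G(w)=\deg_G(\rho)S(\rho),
\]
so it suffices to bound the expectation of this sum directly. I will use the pointwise inequality $ab\le \tfrac13 a^3+\tfrac23 b^{3/2}$, or more simply an asymmetric Young split. This works best when applied together with the mass transport principle in the following way. For $w\sim\rho$, write
\[
\deg(\rho)\deg(w)\le \tfrac{2}{3}\deg(\rho)^{2}\cdot\deg(\rho)^{0}\ldots
\]
A cleaner route is to use AM--GM in the form $\deg(\rho)\deg(w)\le \tfrac12\bigl(\deg(\rho)^2+\deg(w)^2\bigr)$ and sum over $w\sim\rho$. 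This gives
\[
\deg(\rho)S(\rho)\le \tfrac12\deg(\rho)^3+\tfrac12\sum_{w\sim\rho}\deg(w)^2 .
\]
Next I transport mass $\deg(w)^2\mathbf 1_{\{u\sim w\}}$ from $u$ to $w$. The root sends $\sum_{w\sim\rho}\deg(w)^2$ and receives $\deg(\rho)\cdot\deg(\rho)^2=\deg(\rho)^3$. Hence $\E\bigl[\sum_{w\sim\rho}\deg(w)^2\bigr]=\E[\deg(\rho)^3]$. Taking expectations yields
\[
\E[\deg(\rho)S(\rho)]\le \tfrac12\E[\deg^3]+\tfrac12\E[\deg^3]=\E[\deg(\rho)^3]<\infty .
\]

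The only delicate point is the order of operations. All transported quantities must be nonnegative, so that the mass transport principle applies before finiteness is known. The symmetrization should be done pointwise before expectations are taken, so that no unjustified cancellation of possibly infinite expectations occurs. Both conditions hold in the argument above, so I expect no real obstacle.
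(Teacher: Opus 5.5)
Your proof is correct and follows essentially the same route as the paper: the transport of $\deg_G(w)\mathbf 1_{\{u\sim w\}}$ gives the identity, and the pointwise bound $\deg(\rho)\deg(w)\le\frac12\bigl(\deg(\rho)^2+\deg(w)^2\bigr)$ combined with the transport of $\deg_G(w)^2\mathbf 1_{\{u\sim w\}}$ gives the second estimate. Delete the abandoned Young-inequality fragments (including the line ending in $\ldots$). Your remark that $\deg^2\le\deg^3$ usefully makes explicit the finiteness step that the paper leaves implicit.
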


\begin{proof}
Apply the mass-transport principle to
\[
    F_1(G,u,v)
    :=
    \mathbf 1_{\{u\sim v\}}\deg_G(v).
\]
The total mass sent from \(\rho\) is
\[
    \sum_{v\sim \rho}\deg_G(v)
    =
    S(\rho),
\]
whereas the total mass received at \(\rho\) is
\[
    \sum_{u\sim \rho}\deg_G(\rho)
    =
    \deg_G(\rho)^2.
\]
Hence
\[
    \E[S(\rho)]
    =
    \E\bigl[\deg_G(\rho)^2\bigr]
    <\infty.
\]
For the second assertion, write \(d(v):=\deg_G(v)\). Since
\[
    d(\rho)S(\rho)
    =
    \sum_{v\sim\rho}d(\rho)d(v),
\]
the inequality \(2ab\le a^2+b^2\) gives
\[
\begin{aligned}
    d(\rho)S(\rho)
    &\le
    \frac12
    \sum_{v\sim\rho}
    \bigl(d(\rho)^2+d(v)^2\bigr) \\
    &=
    \frac12 d(\rho)^3
    +
    \frac12\sum_{v\sim\rho}d(v)^2.
\end{aligned}
\]
Now apply the mass-transport principle to
\[
    F_2(G,u,v)
    :=
    \mathbf 1_{\{u\sim v\}}\deg_G(v)^2.
\]
The total mass sent from \(\rho\) is
\[
    \sum_{v\sim\rho}\deg_G(v)^2,
\]
whereas the total mass received at \(\rho\) is
\[
    \sum_{u\sim\rho}\deg_G(\rho)^2
    =
    \deg_G(\rho)^3.
\]
Therefore,
\[
    \E\left[\sum_{v\sim\rho}\deg_G(v)^2\right]
    =
    \E\bigl[\deg_G(\rho)^3\bigr].
\]
Combining the preceding estimates yields
\[
    \E\bigl[\deg_G(\rho)S(\rho)\bigr]
    \le
    \E\bigl[\deg_G(\rho)^3\bigr]
    <\infty.
\]
\end{proof}

\begin{lemma}[Reversibility]
\label{lem:degree-bias-reversibility}
Define the degree-biased law \(\widehat{\mathbb P}\) by
\begin{equation}\label{eq:degree-biased-law-boundary}
    \frac{d\widehat{\mathbb P}}{d\mathbb P}(G,\rho,\Theta)
    =
    \frac{\deg_G(\rho)}{\E[\deg_G(\rho)]}.
\end{equation}
Then, under \(\widehat{\mathbb P}\), the rooted marked graph is reversible
for simple random walk:
\[
    (G,\rho,X_1,\Theta)
    \overset{d}{=}
    (G,X_1,\rho,\Theta).
\]
Consequently, the bi-infinite walk \((X_n)_{n\in\mathbb Z}\) is stationary
under time shifts.
\end{lemma}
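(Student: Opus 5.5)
The plan is the standard degree-biasing argument (Aldous--Lyons, Benjamini--Curien), carried out with the angle marks $\Theta$ attached. Since every earlier result is stated for unimodular marked ADTs, the marks simply travel along with the graph. We assume $\E[\deg_G(\rho)]<\infty$ (granted by the third-moment hypothesis), so that $\widehat{\mathbb P}$ in \eqref{eq:degree-biased-law-boundary} is a probability measure.

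\emph{Step 1: one-step reversibility.} Fix a nonnegative Borel function $h$ on the space of doubly-rooted marked graphs. By definition of $\widehat{\mathbb P}$ and of the first step of simple random walk,
\[
\widehat{\E}\bigl[h(G,\rho,X_1,\Theta)\bigr]
=\frac{1}{\E[\deg_G(\rho)]}\,\E\Bigl[\deg_G(\rho)\sum_{v\sim\rho}\frac{h(G,\rho,v,\Theta)}{\deg_G(\rho)}\Bigr]
=\frac{1}{\E[\deg_G(\rho)]}\,\E\Bigl[\sum_{v}\mathbf 1_{\{\rho\sim v\}}h(G,\rho,v,\Theta)\Bigr].
\]
The degree weight cancels exactly. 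Now apply the mass-transport principle to the transport $F(G,u,v):=\mathbf 1_{\{u\sim v\}}h(G,u,v,\Theta)$; unimodularity is understood for rooted marked maps. This replaces the last expectation by $\E[\sum_v \mathbf 1_{\{v\sim\rho\}}h(G,v,\rho,\Theta)]$. Reversing the computation above gives $\widehat{\E}[h(G,X_1,\rho,\Theta)]$. Taking $h$ to be indicators of Borel sets yields the distributional identity $(G,\rho,X_1,\Theta)\overset{d}{=}(G,X_1,\rho,\Theta)$.

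\emph{Step 2: stationarity of the forward walk.} Take $h$ depending only on the second root, that is, $h(G,u,v,\Theta)=g(G,v,\Theta)$. Step 1 then gives $(G,X_1,\Theta)\overset{d}{=}(G,\rho,\Theta)$ under $\widehat{\mathbb P}$. Hence $\widehat{\mathbb P}$ is a stationary measure for the environment seen from the walker. By the Markov property, the sequence of rooted marked graphs $(G,X_n,\Theta)_{n\ge0}$ is stationary.

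\emph{Step 3: the bi-infinite walk.} Let $(X_{-n})_{n\ge0}$ and $(X_n)_{n\ge0}$ be two walks started at $\rho$ that are independent given $(G,\rho,\Theta)$. We show that the law of $(G,(X_{n+k})_{n\in\mathbb Z},\Theta)$ does not depend on $k$; it suffices to treat $k=1$. Consider the finite window $(X_{-m},\dots,X_m)$ and use the path-weight formula: under $\widehat{\mathbb P}$ the probability of a path $\gamma_{-m},\dots,\gamma_m$ is proportional to $\deg(\gamma_0)\prod_{j=-m}^{m-1}\deg(\gamma_j)^{-1}$, up to correcting the factor at $\gamma_0$. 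This weight equals $\prod_{j\neq 0}\deg(\gamma_j)^{-1}$ times symmetric factors, which is the usual reversibility of simple random walk with respect to degree. Combining this with Step 1 shows that re-rooting at $X_1$ and shifting indices preserves the law. Equivalently, one can apply the mass-transport principle to
\[
F(G,u,v)=\mathbf 1_{\{u\sim v\}}\,\E\bigl[h\ \text{of the two-sided path through the edge }(u,v)\bigr].
\]
Ionescu--Tulcea/Kolmogorov consistency then extends the finite-window statement to the full sequence.

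The main obstacle is purely measure-theoretic bookkeeping in Step 3: showing that the two-sided path law is re-rooting invariant. I would handle it by the cylinder-set/mass-transport computation with path weights, as in Benjamini--Curien. Steps 1 and 2 are routine, and the marks $\Theta$ play no role beyond being carried along.
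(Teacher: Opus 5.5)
Your Step 1 is exactly the paper's proof: the degree bias cancels the $1/\deg_G(\rho)$ step probability, and the mass-transport principle applied to $\mathbf 1_{\{u\sim v\}}h(G,u,v,\Theta)$ swaps the two roots; your Steps 2--3 then correctly supply the stationarity consequence, which the paper only asserts. One small slip in Step 3: the $\widehat{\mathbb P}$-weight of a window $\gamma_{-m},\dots,\gamma_m$ with $\gamma_0=\rho$ is $\prod_{j=-m+1}^{m-1}\deg(\gamma_j)^{-1}$, so the root's factor is \emph{not} omitted (this is precisely why shifting the root along the path leaves the law unchanged), but your edge-transport formulation avoids this bookkeeping entirely; equivalently, use $(G,X_1,X_0,\Theta)\overset{d}{=}(G,X_0,X_{-1},\Theta)$, which follows from Step 1 plus the Markov property.
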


\begin{proof}
Let \(F\) be a bounded Borel function on doubly rooted marked graphs. By
the definition of degree biasing,
\[
\begin{aligned}
    \widehat{\E}\bigl[F(G,\rho,X_1,\Theta)\bigr]
    &=
    \frac{1}{\E[\deg_G(\rho)]}
    \E\left[
        \sum_{v\sim\rho}F(G,\rho,v,\Theta)
    \right].
\end{aligned}
\]
The mass-transport principle, applied to
\[
    f(G,u,v,\Theta)
    :=
    \mathbf 1_{\{u\sim v\}}F(G,u,v,\Theta),
\]
interchanges \(u\) and \(v\), giving the required reversibility.
\end{proof}

Since the degree-biased and original laws are mutually absolutely
continuous, an almost-sure statement under one law is also an almost-sure
statement under the other. We may therefore work under the reversible law
and, by ergodic decomposition, assume when convenient that the law is
ergodic. The refined Ring Lemma gives the following estimate.

\begin{lemma}
\label{lem:ring-two-sided-boundary}
There exists a constant \(C<\infty\), depending only on the constants $\eps_0,\eps_1$, such that for every
\(u\sim v\),
\begin{equation}\label{eq:ring-two-sided-boundary}
    \left|
        \log\frac{r(v)}{r(u)}
    \right|
    \leq
    C\bigl(S(u)+S(v)\bigr).
\end{equation}
\end{lemma}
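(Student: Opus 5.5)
This estimate should follow directly from the refined Ring Lemma (Lemma~\ref{thm:re-ring-lemma}), applied once in each direction along the edge. We work with a fixed realization of $(G,\rho,\Theta)$ together with its RCP $C=(z,r)$ in $\D$. The standing hypotheses are tameness, (Z1), (Z3), (Z4) and (Z2$'$), with constants $\eps_0,\eps_1$, and (Z2$'$) implies (Z2). So the hypotheses of Lemma~\ref{thm:re-ring-lemma} hold, and it yields a constant $C_1=C_1(\eps_0,\eps_1)$.

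Fix an edge $u\sim v$. Applying Lemma~\ref{thm:re-ring-lemma} with $u$ as the center gives $r(v)/r(u)>e^{-C_1S(u)}$, that is,
\[
\log\frac{r(v)}{r(u)}>-C_1S(u).
\]
Since adjacency is symmetric, we can apply the same lemma with the roles of $u$ and $v$ exchanged. This gives $r(u)/r(v)>e^{-C_1S(v)}$, that is,
\[
\log\frac{r(v)}{r(u)}<C_1S(v).
\]
Combining the two inequalities,
\[
-C_1S(u)<\log\frac{r(v)}{r(u)}<C_1S(v).
\]
Because $S(u)$ and $S(v)$ are nonnegative, both bounds are dominated by $C_1\bigl(S(u)+S(v)\bigr)$. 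This yields \eqref{eq:ring-two-sided-boundary} with $C=C_1$, and $C$ depends only on $\eps_0$ and $\eps_1$.

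The only point that needs care is whether Lemma~\ref{thm:re-ring-lemma}, which is stated for "an embedding in the plane," applies to the RCP in $\D$. My plan is to note that the proof of Lemma~\ref{thm:re-ring-lemma} is purely local: it only uses the flower of $u$ and the neighbors of those flower vertices, through Lemmas~\ref{lemma1}--\ref{lemma3}. None of these arguments uses the carrier being all of $\C$. They use only Euclidean geometry of the disks in the embedding, which holds equally for a locally finite RCP in $\D$. Beyond this, no further obstacle is expected.
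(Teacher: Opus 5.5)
Your proposal is correct and matches the paper's proof: apply the refined Ring Lemma once with $u$ as center and once with $v$ as center, then combine the two one-sided bounds into $C\bigl(S(u)+S(v)\bigr)$. Your extra remark about the disk versus the plane is harmless; an RCP in $\mathbb{D}$ is already a circle pattern embedded in the plane, so the lemma applies directly.
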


\begin{proof}
The refined Ring Lemma gives
\[
    \frac{r(v)}{r(u)}
    \geq e^{-CS(u)}.
\]
Applying the same estimate with \(u\) and \(v\) interchanged yields
\[
    \frac{r(u)}{r(v)}
    \geq e^{-CS(v)}.
\]
Combining the two inequalities proves
\eqref{eq:ring-two-sided-boundary}.
\end{proof}

\begin{lemma}
\label{lem:integrable-local-distortion}
Under the reversible edge-rooted law,
\begin{equation}\label{eq:integrable-radius-increment}
    \widehat{\E}\left[
        \left|
            \log\frac{r(X_1)}{r(X_0)}
        \right|
    \right]
    <\infty.
\end{equation}
Moreover,
\begin{equation}\label{eq:integrable-hyperbolic-increment}
    \widehat{\E}\left[
        d_{\mathrm h}\bigl(z_h(X_0),z_h(X_1)\bigr)
    \right]
    <\infty.
\end{equation}
\end{lemma}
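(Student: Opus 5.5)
The plan is to reduce both estimates to the two-sided ring bound of Lemma~\ref{lem:ring-two-sided-boundary} and then to the moment bounds of Lemma~\ref{lem:flower-integrability}, by rewriting the degree-biased expectations as sums over neighbours under $\mathbb P$.

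\textbf{Step 1: the radius increment.} By the definition of the degree-biased law \eqref{eq:degree-biased-law-boundary}, for any nonnegative function $F$ on doubly rooted graphs,
\[
\widehat{\E}\bigl[F(G,X_0,X_1)\bigr]=\frac{1}{\E[\deg_G(\rho)]}\,\E\Bigl[\sum_{v\sim\rho}F(G,\rho,v)\Bigr].
\]
Here $\E[\deg_G(\rho)]\ge 1$ and is finite by the third-moment assumption. We take $F=|\log(r(v)/r(\rho))|$ and apply Lemma~\ref{lem:ring-two-sided-boundary}. This bounds the inner sum by
\[
C\sum_{v\sim\rho}\bigl(S(\rho)+S(v)\bigr)=C\deg_G(\rho)S(\rho)+C\sum_{v\sim\rho}S(v).
\]
The first term has finite expectation by the second assertion of Lemma~\ref{lem:flower-integrability}.

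For the second term we use the mass-transport principle with $\mathbf 1_{\{u\sim v\}}S(v)$. This gives $\E[\sum_{v\sim\rho}S(v)]=\E[\deg_G(\rho)S(\rho)]$, which is again finite. That proves \eqref{eq:integrable-radius-increment}.

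The ratio $r(v)/r(u)$ is not Möbius invariant. This is harmless: the estimate is for a fixed RCP in $\D$, and the ring lemma holds for any planar embedding.

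\textbf{Step 2: the hyperbolic increment.} I would bound $d_h(z_h(u),z_h(v))$ for adjacent $u,v$ by a local quantity. The two disks $D_u,D_v$ intersect, so their union is a connected set, and we can join the two hyperbolic centers through an intersection point $p\in D_u\cap D_v$. This gives
\[
d_h(z_h(u),z_h(v))\le d_h(z_h(u),p)+d_h(p,z_h(v))\le r_h(u)+r_h(v).
\]
This is only useful if the hyperbolic radii are integrable, and they need not be bounded.

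The alternative, which I prefer, is to compare through the Euclidean data. For a disk in $\D$ of Euclidean radius $r$ whose boundary point nearest $\partial\D$ lies at distance $s$ from $\partial\D$, one has $r_h\approx\log(1+r/s)$. Since $D_u$ and $D_v$ overlap, the hyperbolic distance between their centers is bounded by
\[
C'+\Bigl|\log\tfrac{r(v)}{r(u)}\Bigr|+\min\bigl(r_h(u),r_h(v)\bigr)
\]
up to constants.

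The cleaner route is to use the Möbius invariance of the pattern. Normalize by a Möbius automorphism of $\D$ sending $z_h(u)$ to $0$. The radius ratio of adjacent circles after this normalization is still controlled by the ring lemma, since that lemma is stated for arbitrary planar embeddings. The circle $D_u$ then becomes a disk centered at $0$ with Euclidean radius $t<1$, and $D_v$ meets it with radius at least $t\,e^{-C(S(u)+S(v))}$.

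\textbf{Main obstacle: a large circle $D_u$.} If $t$ is close to $1$, i.e.\ $r_h(u)$ is large, Step 2 alone is insufficient. In that case $D_v$, which is contained in $\D$ and meets $D_u$, lies near the boundary. Its hyperbolic center is then within distance $O(r_h(u)+r_h(v))$ of $0$, and the intersection-angle geometry only gives a bound of the form $r_h(v)\lesssim r_h(u)+C(S(u)+S(v))$.

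So the remaining point is the integrability of $r_h(\rho)$ itself. Here I would appeal to the boundary theory of \cite{angel2018hyperbolic,IIP2026}. In the unimodular hyperbolic setting, the hyperbolic radius of the root circle has exponential tails, since large hyperbolic circles force large degrees via the ring lemma. Concretely, a circle of hyperbolic radius $R$ has boundary length about $e^{R}$, and it must be surrounded by its neighbours, whose radii are comparable up to $e^{CS}$. I would aim for $r_h(\rho)\le C(1+S(\rho))$ in expectation, via a mass transport comparing $r_h(\rho)$ with the flower degrees. I expect this bound on $\widehat{\E}[r_h(X_0)]$ to be the main difficulty.
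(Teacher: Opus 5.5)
Your Step~1 is correct and matches the paper's argument. The two-sided ring estimate bounds the integrand by $C(S(X_0)+S(X_1))$, and the transport $\mathbf 1_{\{u\sim v\}}S(v)$ gives $\E[\sum_{v\sim\rho}S(v)]=\E[\deg_G(\rho)S(\rho)]\le\E[\deg_G(\rho)^3]$.

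\textbf{The gap is in Step~2.} You correctly reduce \eqref{eq:integrable-hyperbolic-increment}, via $d_{\mathrm h}(z_h(u),z_h(v))\le r_h(u)+r_h(v)$, to control of the hyperbolic radii. But you then leave exactly that point as ``the main difficulty'', supported only by a heuristic and a citation. Since this is the entire content of the second assertion, the proof is incomplete as written.

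Your caution is justified. The paper instead asserts $d_{\mathrm h}(z_h(u),z_h(v))\le C_1+C_2|\log(r(v)/r(u))|$ with Euclidean radii, and this cannot hold in general. Take two externally tangent disks ($\Theta=0$) of equal Euclidean radius, both at Euclidean distance $\eta$ from $\partial\mathbb D$. Then $d_{\mathrm h}(z_h(u),z_h(v))=r_h(u)+r_h(v)\to\infty$ as $\eta\to0$, while $r(v)/r(u)=1$. So some bound on $r_h$ is genuinely needed.

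\textbf{How to close it.} The missing step is elementary and deterministic, and it uses only ingredients you already wrote down. Normalize by a disk automorphism so that $z_h(u)=0$; this changes neither $r_h$ nor $d_{\mathrm h}$. Then $D_u=\{|z|<t\}$ with $t=\tanh(r_h(u)/2)$. A neighbour disk $D_v\subset\mathbb D$ with centre $c$ and radius $s$ satisfies $|c|\le 1-s$ and
\[
|c|^2=t^2+s^2+2ts\cos\Theta(uv)\ge t^2+s^2-2ts\cos\varepsilon_1,
\]
hence
\[
s\le\frac{1-t^2}{2(1-t\cos\varepsilon_1)}\le\frac{1-t}{1-\cos\varepsilon_1}.
\]
The refined ring lemma, applied to this normalized pattern (still an RCP realizing $(G,\Theta)$), gives $s>t\,e^{-CS(u)}$. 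For $t\ge\frac12$ this forces $1-t\ge\frac12(1-\cos\varepsilon_1)e^{-CS(u)}$, so
\[
r_h(u)=\log\frac{1+t}{1-t}\le C\,S(u)+C'(\varepsilon_1).
\]
The bound holds trivially when $t<\frac12$. Thus $d_{\mathrm h}(z_h(u),z_h(v))\le C(S(u)+S(v))+2C'$ for every edge $u\sim v$, and the computation of Step~1 finishes the proof.

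Your guess $r_h(\rho)\lesssim 1+S(\rho)$ was right. It holds pointwise, with no mass transport or boundary theory needed.
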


\begin{proof}
The first assertion follows from
Lemma~\ref{lem:ring-two-sided-boundary}, the third-moment assumption, and
the mass-transport computation for the stationary law, exactly
as in \cite{IIP2026}. For the second assertion, elementary geometry of two
intersecting hyperbolic disks gives
\[
    d_{\mathrm h}\bigl(z_h(u),z_h(v)\bigr)
    \leq
    C_1+
    C_2\left|
        \log\frac{r(v)}{r(u)}
    \right|,
    \qquad u\sim v,
\]
where \(C_1,C_2\) depend only on the angle pinching. The conclusion follows
from \eqref{eq:integrable-radius-increment}.
\end{proof}

\subsection{Exponential decay of radii}

We next prove the key estimate needed for boundary convergence.
Since \(G\) is RCP-hyperbolic, Theorem~\ref{thm:dichotomy} implies that
\(G\) is invariantly non-amenable. We shall use the standard consequence
that \(G\) contains an invariantly non-amenable subgraph of bounded degree.

\begin{lemma}
\label{lem:nonamenable-core-boundary}
There exist an invariant random vertex set
\(\omega\subseteq V(G)\), a deterministic constant \(M<\infty\), and
a deterministic constant \(h_0>0\) such that
\[
    \widehat{\mathbb P}(\rho\in\omega)>0,
\]
and, almost surely,
\[
    \sup_{v\in\omega}\deg_G(v)\leq M,
    \qquad
    i_E(G[\omega])\geq h_0.
\]
\end{lemma}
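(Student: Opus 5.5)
The plan is to derive the lemma from invariant non-amenability (Theorem~\ref{thm:dichotomya}(v)) by a standard percolation-type argument: first make the graph non-amenable with positive probability while forcing bounded degree, then pass to a subgraph with a deterministic Cheeger constant. I would work under \(\mathbb P\) and transfer to \(\widehat{\mathbb P}\) at the end. This transfer is harmless because the two laws are mutually absolutely continuous, so \(\mathbb P(\rho\in\omega)>0\) is equivalent to \(\widehat{\mathbb P}(\rho\in\omega)>0\).

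\textbf{Step 1: bound the expected degree inside the high-degree part.} Since \(G\) is RCP-hyperbolic, Theorem~\ref{thm:dichotomya} together with the Gauss--Bonnet formula (Theorem~\ref{thm:chr2deg}) gives \(\E[\deg_G(\rho)]>6\). By Aldous--Lyons, invariant non-amenability means there is \(c>0\) such that every finite unimodular percolation \(\eta\) (a random subgraph all of whose clusters are finite) satisfies
\[
\E[\deg_\eta(\rho)]\le \E[\deg_G(\rho)]-c.
\]
For \(M<\infty\), let \(\omega_M=\{v:\deg_G(v)\le M\}\), which is invariant. Because \(\E[\deg_G(\rho)]<\infty\), monotone convergence gives
\[
\E\bigl[\deg_G(\rho)\mathbf 1_{\{\deg_G(\rho)>M\}}\bigr]+\E\bigl[\#\{v\sim\rho:\deg_G(v)>M\}\bigr]\to0 .
\]
The second term is controlled by the mass transport principle, since it equals \(\E[\deg_G(\rho)\mathbf 1_{\deg>M}]\). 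So for \(M\) large, the expected number of edges incident to \(\rho\) that leave \(G[\omega_M]\) is less than \(c/2\).

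\textbf{Step 2: show \(G[\omega_M]\) is not finitely percolating.} Suppose \(G[\omega_M]\) were invariantly amenable, so that \(G[\omega_M]\) admits finite percolations whose expected degree is nearly its full expected degree. Add to such a percolation nothing from the complement of \(\omega_M\): vertices outside \(\omega_M\) stay isolated, and the clusters remain finite. This produces a finite percolation \(\eta\) on \(G\) with
\[
\E[\deg_\eta(\rho)]\ge \E[\deg_G(\rho)]-c/2-o(1),
\]
which contradicts the choice of \(c\). Hence \(G[\omega_M]\) is invariantly non-amenable on the event \(\rho\in\omega_M\), and in particular \(\mathbb P(\rho\in\omega_M)>0\) holds automatically.

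\textbf{Step 3: extract a deterministic Cheeger constant.} It remains to pass from invariant non-amenability of a bounded-degree unimodular subgraph to an actual edge-isoperimetric bound \(i_E\ge h_0\). This is the step I expect to be the main obstacle. The plan is to invoke the Benjamini--Lyons--Schramm / Aldous--Lyons theorem that an invariantly non-amenable unimodular graph contains an invariant percolation subgraph with positive edge Cheeger constant. Take \(\omega\) to be the vertex set of the infinite clusters of such a percolation inside \(G[\omega_M]\); these clusters have positive Cheeger constant and degrees at most \(M\).

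The Cheeger constant might a priori depend on the realization. To handle this, I would either use ergodic decomposition, so that the constant can be chosen deterministic on each ergodic component, or choose a threshold \(h_0\) such that the clusters with \(i_E\ge h_0\) occur with positive probability and then restrict \(\omega\) to those clusters. The restricted set is still invariant, and \(\mathbb P(\rho\in\omega)>0\) by the countable union argument over rational thresholds.
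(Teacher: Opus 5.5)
Your proposal is correct and is essentially the argument behind the paper's proof, which simply cites Angel--Hutchcroft--Nachmias--Ray and Lyons--Peres: you truncate to $\{v:\deg_G(v)\le M\}$ using $\mathbb E[\deg_G(\rho)]<\infty$ and mass transport, check that the truncated percolation stays invariantly non-amenable, then apply the Benjamini--Lyons--Schramm/Aldous--Lyons theorem to get a subpercolation whose infinite clusters have positive Cheeger constant, and fix $h_0$ by a rational threshold. Two small points to state explicitly: a deterministic defect $c>0$ in Step 1 uses ergodicity (which the section permits via ergodic decomposition), and enlarging the selected clusters to the induced graph $G[\omega]$ only adds edges, so $|\partial_E^{G[\omega]}W|\ge h_0|W|$ still holds for every finite $W\subseteq\omega$.
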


\begin{proof}
See \cite{angel2016unimodular,angel2018hyperbolic,LyPer16}.
\end{proof}

Let
\[
    N_0:=\inf\{n\geq0:X_n\in\omega\},
    \qquad
    N_{k+1}:=\inf\{n>N_k:X_n\in\omega\},
\]
and set
\[
    Y_k:=X_{N_k}.
\]
The process \((Y_k)\) is the walk induced on \(\omega\). Its edge weights
are
\[
    w(u,v)
    :=
    \deg_G(u)\,
    \mathbb P_u^G(Y_1=v),
    \qquad u,v\in\omega.
\]

\begin{lemma}
\label{lem:induced-cheeger}
The weighted graph \((\omega,w)\) has positive edge Cheeger constant.
More precisely,
\[
    i_E(\omega,w)
    \geq
    \frac{1}{M}i_E(\omega)>0.
\]
\end{lemma}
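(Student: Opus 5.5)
The plan is to compare the boundary of a finite set $K\subseteq\omega$ under the induced weights $w$ with its edge boundary in the subgraph $G[\omega]$. The comparison runs through first-step decompositions of the induced walk. Throughout, the edge Cheeger constant of $(\omega,w)$ is taken as
\[
i_E(\omega,w)=\inf_{K}\frac{w(\partial K)}{\sum_{u\in K}\deg_G(u)},
\]
where $w(\partial K)=\sum_{u\in K,\,v\in\omega\setminus K}w(u,v)$ and the infimum runs over finite nonempty $K\subseteq\omega$. The denominator is the total weight at $u$, which equals $\sum_{v\in\omega}w(u,v)=\deg_G(u)\sum_v\mathbb P_u^G(Y_1=v)=\deg_G(u)$, using that the induced walk returns to $\omega$ almost surely.

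\textbf{Lower bound on the boundary weight.} Fix a finite $K\subseteq\omega$ and an edge $\{u,v\}$ of $G[\omega]$ with $u\in K$ and $v\in\omega\setminus K$. Since $v\in\omega$, the event that the walk from $u$ steps directly to $v$ is contained in $\{Y_1=v\}$, so $\mathbb P_u^G(Y_1=v)\ge 1/\deg_G(u)$. Hence $w(u,v)\ge 1$. Summing over such edges gives $w(\partial K)\ge |\partial_E^{G[\omega]}K|$, the number of edges of $G[\omega]$ leaving $K$.

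\textbf{Upper bound on the volume.} By Lemma~\ref{lem:nonamenable-core-boundary}, $\deg_G(u)\le M$ for every $u\in\omega$, so $\sum_{u\in K}\deg_G(u)\le M|K|$.

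\textbf{Combining.} The two bounds give
\[
\frac{w(\partial K)}{\sum_{u\in K}\deg_G(u)}\ \ge\ \frac{|\partial_E^{G[\omega]}K|}{M|K|}\ \ge\ \frac{i_E(G[\omega])}{M}\ \ge\ \frac{h_0}{M}>0.
\]
Here $i_E(G[\omega])$ is normalized by $|K|$; if it is instead normalized by the $G[\omega]$-degree volume, the same bound holds because that volume is also at most $M|K|$.

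\textbf{Main obstacle.} The step needing the most care is the bookkeeping of conventions, namely that the weighted Cheeger constant uses the stationary measure $\deg_G$ of the induced walk. The identity $\sum_v w(u,v)=\deg_G(u)$ relies on the walk returning to $\omega$ almost surely. If returns were not almost sure, the total weight at $u$ would only be at most $\deg_G(u)$. Even then, bounding the volume by $M|K|$ only makes the denominator larger, so the stated inequality still holds.
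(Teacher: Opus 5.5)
Your argument is correct and is essentially the paper's proof: edges of $G[\omega]$ get weight $w(u,v)\ge 1$ because $X_1=v$ forces $Y_1=v$, and the total weight at $u\in\omega$ is at most $\deg_G(u)\le M$. As you note, and as the paper does by using only $w(u)\le\deg_G(u)$, the question of almost-sure return to $\omega$ never matters. One small slip is in your aside on the degree-volume normalization: that case needs $\mathrm{vol}_{G[\omega]}(K)\ge|K|$ (no isolated vertices in $G[\omega]$), not the upper bound $\mathrm{vol}_{G[\omega]}(K)\le M|K|$, which points the wrong way.
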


\begin{proof}
For \(u\in\omega\), the induced vertex weight satisfies
\[
    w(u)=\sum_{v\in\omega}w(u,v)\leq\deg_G(u)\leq M.
\]
If \(uv\) is an edge of the original subgraph \(\omega\), then
\[
    w(u,v)
    =
    \deg_G(u)\mathbb P_u^G(Y_1=v)
    \geq1,
\]
because the event \(X_1=v\) forces \(Y_1=v\). Hence, for every finite
\(W\subseteq\omega\),
\[
    w(\partial_EW)\geq|\partial_EW|,
    \qquad
    w(W)\leq M|W|,
\]
which gives the claimed inequality.
\end{proof}

\begin{lemma}
\label{lem:induced-heat-kernel}
The spectral radius of the induced walk on \((\omega,w)\) is strictly
smaller than \(1\). Consequently, there exist \(c>0\) and \(C<\infty\)
such that
\begin{equation}\label{eq:induced-heat-kernel}
    \mathbb P_\rho^G(Y_k=v)
    \leq
    Ce^{-ck}
\end{equation}
for every \(v\in\omega\) and \(k\geq1\).
\end{lemma}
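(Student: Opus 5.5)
We derive the spectral gap from the positive edge Cheeger constant of Lemma~\ref{lem:induced-cheeger}, then convert it into pointwise heat-kernel decay via reversibility and the bounded degree on \(\omega\).

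\emph{Step 1: reversibility.} The induced walk \((Y_k)\) is reversible with respect to the vertex weights \(w(u)=\sum_{v\in\omega}w(u,v)\). Indeed, the symmetry \(w(u,v)=w(v,u)\) follows from reversibility of simple random walk on \(G\) with respect to \(\deg_G\): an excursion from \(u\) to \(v\) avoiding \(\omega\) in between, reversed, is an excursion from \(v\) to \(u\). The path-probability identity \(\deg_G(x_0)\prod p(x_i,x_{i+1})=\deg_G(x_n)\prod p(x_{i+1},x_i)\) gives this directly. Moreover, \(w(u)\le\deg_G(u)\le M\). Since \(G\) is VEL-hyperbolic and hence transient, the excursion may not return to \(\omega\), so \(w(u)\) can be strictly less than \(\deg_G(u)\). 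The induced chain is therefore substochastic: \(P(u,v)=w(u,v)/\deg_G(u)\), with the killing mass absorbed in a cemetery. I would treat it as a reversible kernel with respect to \(\pi(u)=\deg_G(u)\), which is bounded between \(1\) and \(M\). The Cheeger estimate then applies to the Dirichlet form \(\mathcal E(f)=\tfrac12\sum w(u,v)(f(u)-f(v))^2+\sum \kappa(u)f(u)^2\). The killing term only helps, so we may drop it.

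\emph{Step 2: Cheeger implies a spectral gap.} Apply the standard discrete Cheeger inequality (Dodziuk; Mohar; Lyons--Peres, Theorem 6.7). A positive edge isoperimetric constant \(i_E(\omega,w)\ge h_0/M\) for finite sets, measured against \(\pi\), yields \(\|P\|_{\ell^2(\pi)}\le\sqrt{1-h^2}<1\) with \(h=h_0/M^2\) up to harmless constants. Since \(\pi\) and \(w(\cdot)\) are comparable within a factor of \(M\), the isoperimetric ratio transfers with an extra factor of \(M\).

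\emph{Step 3: pointwise decay.} By reversibility and the Cauchy--Schwarz inequality,
\[
\mathbb P_\rho^G(Y_k=v)=\frac{\pi(v)}{\pi(\rho)}\,\bigl\langle \mathbf 1_\rho/\pi(\rho)\cdot\pi(\rho),\,P^k(\mathbf 1_v/\pi(v))\bigr\rangle_\pi\le\sqrt{\tfrac{\pi(v)}{\pi(\rho)}}\,\|P\|^k .
\]
This gives the bound \(\sqrt{M}\,e^{-ck}\) with \(c=-\log\|P\|\) when \(\rho\in\omega\). When \(\rho\notin\omega\), condition on \(Y_0=X_{N_0}\in\omega\) and apply the same bound from \(Y_0\); it is uniform in the starting point of \(\omega\), so \(C=\sqrt M\) still works.

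The main obstacle is ensuring that the constants \(c,C\) are deterministic despite the random environment. This is exactly why Lemma~\ref{lem:nonamenable-core-boundary} supplies deterministic \(M\) and \(h_0\); with those, every step above is quantitative and environment-free. A secondary point is checking that the Cheeger inequality holds for infinite substochastic reversible chains, which follows by exhaustion with finite Dirichlet problems.
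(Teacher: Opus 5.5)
Your proposal is correct and follows the same route as the paper: reversibility of the induced chain, then Cheeger's inequality together with Lemma~\ref{lem:induced-cheeger} to get operator norm strictly below \(1\), then the standard \(\ell^2\)/Cauchy--Schwarz conversion into the pointwise bound \eqref{eq:induced-heat-kernel}. You also fill in details the paper's one-line proof leaves implicit. You treat the induced walk as a substochastic kernel reversible with respect to \(\deg_G\) rather than \(w(u)\), which is needed because transience allows the walk never to return to \(\omega\). You also handle \(\rho\notin\omega\) by conditioning on \(Y_0\).
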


\begin{proof}
The induced chain is reversible with respect to the vertex weights \(w(u)\).
Cheeger's inequality for reversible Markov chains and
Lemma~\ref{lem:induced-cheeger} imply that its spectral radius is strictly
smaller than \(1\). The estimate \eqref{eq:induced-heat-kernel} follows.
\end{proof}

We also use the following elementary area estimate.

\begin{lemma}[Counting large circles]
\label{lem:counting-large-circles}
There exists \(C<\infty\) such that for every \(t>0\),
\begin{equation}\label{eq:counting-large-circles}
    \#\{v\in V(G):r(v)\geq t\}
    \leq Ct^{-2}.
\end{equation}
\end{lemma}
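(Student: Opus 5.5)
The count will come from an area argument. The circles of the RCP lie in the unit disk \(\D\), but they may overlap, so comparing the sum of their areas with the area of \(\D\) needs care. I will attach to each vertex a region of comparable area, chosen so that these regions overlap only boundedly.

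Step 1: cells. For each face \(uvw\) of the triangulation, the center triangle \(z(u)z(v)z(w)\) lies in the convex hull of the three disks, which is contained in \(\D\). These center triangles tile the carrier with disjoint interiors, because the RCP embeds \(G\) geometrically with straight edges between centers. Let \(\alpha_u^{f}\) be the Euclidean angle of the center triangle of \(f\) at \(z(u)\). The sector of \(D_u\) cut out at \(z(u)\) by this triangle has area at least \(\tfrac12\alpha_u^f r(u)^2\) times a constant, provided the sector stays inside the triangle up to a fixed fraction of the radius. Near \(z(u)\) the sector lies inside the triangle. The opposite side \(z(v)z(w)\) is at distance at least \(c\,r(u)\) from \(z(u)\). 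This holds because the outer intersection point \(P_{vw}\) lies outside \(D_u\), as used in Lemma~\ref{lemma2}, and tameness \(\Theta\le\pi-\varepsilon_1\) gives a uniform margin; this is where \((Z1)\)--\((Z4)\) enter.

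Step 2: summing. Let \(W_u\) be the union over faces \(f\ni u\) of these truncated sectors of radius \(c\,r(u)\). The angles \(\alpha_u^f\) around \(z(u)\) sum to \(2\pi\). Hence \(\operatorname{area}(W_u)\ge \pi c^2 r(u)^2\), and the sets \(W_u\) have pairwise disjoint interiors, since each sits in its own corner of the tiling. Summing over \(u\) gives
\[
\sum_{v} r(v)^2\le \frac{1}{\pi c^2}\operatorname{area}(\D)=c^{-2},
\]
and Chebyshev's inequality gives \(\#\{v: r(v)\ge t\}\le c^{-2}t^{-2}\).

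Main obstacle: the uniform distance bound \(\operatorname{dist}(z(u),[z(v),z(w)])\ge c\,r(u)\). I will try to get it from the triangle of centers with side lengths \(|z(u)z(v)|^2=r_u^2+r_v^2+2r_ur_v\cos\Theta_{uv}\). Tameness gives \(|z(u)z(v)|\ge r_u\sin\varepsilon_1\) when \(r_v\ge r_u\); a short case analysis then handles the remaining possibility that \(r_v,r_w\) are much smaller than \(r_u\).

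A simpler fallback avoids the cells. Cover \(\D\) by the disks and bound the multiplicity: a point lying in \(k\) disks of radius at least \(t\) forces many pairwise intersecting disks. Under (Z3), non-adjacent intersection angles are bounded away from \(\pi\), which should bound the overlap of large disks of comparable size, combined with a dyadic decomposition of the radii \(r\in[2^{j}t,2^{j+1}t)\). I would present the cell argument and use the fallback only if the distance bound fails.
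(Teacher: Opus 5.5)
\textbf{There is a genuine gap.} Your area strategy is fine in outline: corner sectors of radius $c\,r(u)$ inside the center triangles, then $\sum_v r(v)^2\le c^{-2}$. But the step it rests on, $\operatorname{dist}(z(u),[z(v),z(w)])\ge c\,r(u)$, is not proved, and the reason you give for it does not work.

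The facts that $P_{vw}\notin D_u$ and $\Theta\le\pi-\varepsilon_1$ give no margin. Take $r(u)=1$, $z(u)=(0,\delta)$, $r(v)=r(w)=R$, $z(v)=(-\sqrt{R^2-1},0)$ and $z(w)=(\sqrt{R^2-1},0)$. Then:
\begin{itemize}
\item $C_v\cap C_w=\{(0,\pm1)\}$, and the outer point $(0,-1)$ lies outside $D_u$;
\item $\Theta_{uv}=\Theta_{uw}\approx\pi/2$ and $\Theta_{vw}\approx 2/R$, so the face is tame, and (Z1) holds on it;
\item yet the distance from $z(u)$ to the side $[z(v),z(w)]$ is $\delta\to0$.
\end{itemize}
The only condition that fails is (Z4): $\cos\Theta_{uv}+\cos\Theta_{uw}\cos\Theta_{vw}\approx-2/R<0$.

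Your planned case analysis also aims at the wrong regime. The danger is $r(v),r(w)\gg r(u)$, not $r(v),r(w)\ll r(u)$. In that regime $|z(u)z(v)|\ge r(u)\sin\varepsilon_1$ controls the distances to the endpoints but says nothing about the interior of the long side. Nor can you simply bound the height to the line $z(v)z(w)$. That height degenerates when $\Theta_{uv}=0$ and $D_w$ is a tiny disk near the tangency point. So you must use that the nearest point is an endpoint whenever the angle at $z(v)$ or $z(w)$ is obtuse.

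\textbf{How to close it.} Use (Z4) through the following identity for the center triangle with area $A$:
\[
4A^2=r_u^2r_v^2\sin^2\Theta_{uv}+r_u^2r_w^2\sin^2\Theta_{uw}+r_v^2r_w^2\sin^2\Theta_{vw}+2r_ur_vr_w\bigl(r_uJ_{vw}+r_vJ_{uw}+r_wJ_{uv}\bigr),
\]
where $J_{vw}=\cos\Theta_{vw}+\cos\Theta_{uv}\cos\Theta_{uw}$, and $J_{uw}$, $J_{uv}$ are defined cyclically. Condition (Z4) says exactly that all $J\ge0$.
\begin{enumerate}
\item If the angles at $z(v)$ and $z(w)$ are both below $\pi/2$ and one of them is at least $\pi/4$, the height from $z(u)$ is at least $r(u)\sin\varepsilon_1/\sqrt2$.
\item Otherwise the angle at $z(u)$ is obtuse, that is, $r_vr_w\cos\Theta_{vw}>r_u^2+r_ur_v\cos\Theta_{uv}+r_ur_w\cos\Theta_{uw}$. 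Normalize $r_u=1$ and $r_v\ge r_w$. Since $|z(v)z(w)|\le 2r_v$, the identity gives $h_u^2\ge\frac14\bigl(\sin^2\Theta_{uv}+2r_wJ_{uw}+r_w^2\sin^2\Theta_{vw}\bigr)$.
\begin{itemize}
\item For $r_w<t_0<1$, obtuseness forces $\cos\Theta_{uv}<r_w$, so $\sin\Theta_{uv}$ is bounded below.
\item For $r_w\ge t_0$, a compactness argument applies. The only way the bracket could vanish is $\Theta_{uv}=\Theta_{vw}=0$, and then $J_{uw}=1+\cos\Theta_{uw}\ge1-\cos\varepsilon_1>0$.
\end{itemize}
\end{enumerate}

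\textbf{A smaller point.} The sets $W_u$ are not disjoint merely because they sit in different corners: two corner sectors of the same triangle can overlap. You need $c\le\sin(\varepsilon_1/2)$. This suffices because $|z(u)z(v)|\ge(r(u)+r(v))\sin(\varepsilon_1/2)$ under tameness.

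With these additions your argument gives $\sum_v r(v)^2\le c^{-2}$, and hence the lemma. The paper itself only cites the ideal-case lemma of \cite{IIP2026}, so this face estimate is exactly where the non-ideal geometry, and (Z4), must enter.
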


\begin{proof}
See \cite[Lemma 4.8]{IIP2026}
\end{proof}

\begin{lemma}[Exponential decay at return times]
\label{lem:radius-decay-return-times}
There exists \(a>0\) such that
\[
    r(X_{N_k})\leq e^{-ak}
\]
for all sufficiently large \(k\), almost surely.
\end{lemma}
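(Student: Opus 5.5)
The plan is to combine the heat-kernel bound of Lemma~\ref{lem:induced-heat-kernel} with the area bound of Lemma~\ref{lem:counting-large-circles} in a Borel--Cantelli argument, following \cite{angel2016unimodular,angel2018hyperbolic,IIP2026}. We fix a realization of $(G,\rho,\Theta)$ together with its RCP in $\D$. We work quenched: the constants $c,C$ in \eqref{eq:induced-heat-kernel} and the constant in \eqref{eq:counting-large-circles} are then fixed. The event $\rho\in\omega$ or the walk eventually entering $\omega$ will be handled via the invariance of $\omega$. Since $\widehat{\mathbb P}(\rho\in\omega)>0$ and $\omega$ is invariant, the set $\omega$ is a.s. infinite on the event it is nonempty. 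On the nonempty event the walk visits $\omega$ infinitely often; this follows from transience of the induced chain together with recurrence of the event under stationarity of $(X_n)$ and the ergodic theorem. So all $N_k$ are finite a.s.

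Fix $a>0$ to be chosen later. For each $k$ we bound
\[
\mathbb P_\rho^G\bigl(r(Y_k)>e^{-ak}\bigr)
=\sum_{v\in\omega,\ r(v)>e^{-ak}}\mathbb P_\rho^G(Y_k=v)
\le \#\{v: r(v)\ge e^{-ak}\}\cdot Ce^{-ck}.
\]
By Lemma~\ref{lem:counting-large-circles} the first factor is at most $C'e^{2ak}$. Hence
\[
\mathbb P_\rho^G\bigl(r(Y_k)>e^{-ak}\bigr)\le CC'e^{(2a-c)k}.
\]
Choosing $a=c/4$ makes this summable in $k$. Borel--Cantelli then gives $r(X_{N_k})\le e^{-ak}$ for all large $k$, $\mathbb P_\rho^G$-a.s. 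Integrating over the law of the graph gives the statement a.s. under $\widehat{\mathbb P}$, and hence under $\mathbb P$ by mutual absolute continuity.

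The main obstacle is ensuring that the constants in Lemma~\ref{lem:induced-heat-kernel} are legitimately uniform in $v$ for the fixed realization. Cheeger's inequality gives an operator-norm bound $\|P^k\|\le\lambda^k$ on $\ell^2(\omega,w)$, and from it
\[
\mathbb P_\rho(Y_k=v)\le \lambda^k\sqrt{w(v)/w(\rho)}\le \lambda^k\sqrt{M/w(\rho)}.
\]
This requires the walk to start in $\omega$. Otherwise we condition on $Y_0$ and use the strong Markov property, with a constant depending on the (a.s. finite) starting point $Y_0$. A constant depending on the realization suffices, since Borel--Cantelli is applied quenched. A secondary point is that Lemma~\ref{lem:counting-large-circles} needs the pattern normalized in $\D$ so that circles have disjoint-ish interiors of bounded total area. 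Under the tameness and (Z1)--(Z4) hypotheses, bounded overlap multiplicity gives this, and we simply cite it.
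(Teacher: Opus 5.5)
Your proposal is correct and is essentially the paper's proof: bound $\mathbb P_\rho^G\bigl(r(X_{N_k})\ge e^{-ak}\bigr)$ by the heat-kernel estimate times $\#\{v: r(v)\ge e^{-ak}\}\le C e^{2ak}$, choose $2a<c$, and apply Borel--Cantelli. Your extra remarks fill in details the paper leaves implicit: the finiteness of all $N_k$, which really comes from stationarity and the ergodic theorem rather than from transience of the induced chain, and the $\ell^2$ derivation of a heat-kernel constant uniform in $v$.
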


\begin{proof}
Choose \(a>0\) so that \(2a<c\), where \(c\) is the constant in
\eqref{eq:induced-heat-kernel}. Then
\[
\begin{aligned}
    \mathbb P_\rho^G
    \bigl(r(X_{N_k})\geq e^{-ak}\bigr)
    &\leq
    Ce^{-ck}
    \#\{v:r(v)\geq e^{-ak}\}  \\
    &\leq
    C'e^{-(c-2a)k}.
\end{aligned}
\]
The right-hand side is summable, so the lemma follows from the
Borel-Cantelli lemma.
\end{proof}

By stationarity and the positive intensity of \(\omega\), the return times
have positive density:
\begin{equation}\label{eq:return-time-density-boundary}
    \frac{N_k}{k}
    \longrightarrow
    \frac{1}{\widehat{\mathbb P}(\rho\in\omega)}
    <\infty
\end{equation}
on each ergodic component.

\begin{lemma}[Exponential decay of radii]
\label{lem:exponential-radius-decay}
Conditionally on \((G,\rho,\Theta)\), almost surely,
\begin{equation}\label{eq:exponential-radius-decay}
    \limsup_{n\to\infty}
    \frac{\log r(X_n)}{n}<0.
\end{equation}
In particular,
\begin{equation}\label{eq:summable-radii-boundary}
    \sum_{n=0}^{\infty}r(X_n)<\infty
\end{equation}
almost surely.
\end{lemma}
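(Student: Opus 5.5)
The plan is to interpolate between the return times $N_k$ to $\omega$, where Lemma~\ref{lem:radius-decay-return-times} already gives exponential decay, and to show that the walk cannot recover much radius in between. Fix $n$ and let $k=k(n)$ be the largest index with $N_k\le n$. Along the path from $X_{N_k}$ to $X_n$, apply Lemma~\ref{lem:ring-two-sided-boundary} edge by edge:
\[
\log r(X_n)\le \log r(X_{N_k})+C\sum_{j=N_k}^{n-1}\bigl(S(X_j)+S(X_{j+1})\bigr).
\]
Combining this with $\log r(X_{N_k})\le -ak$ for large $k$ reduces the statement to two claims.

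First, $k(n)/n\to \widehat{\mathbb P}(\rho\in\omega)=:p>0$. This is \eqref{eq:return-time-density-boundary} inverted, working on an ergodic component under $\widehat{\mathbb P}$ and transferring back by mutual absolute continuity, as noted after Lemma~\ref{lem:degree-bias-reversibility}.

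Second, the error term
\[
\frac1n\sum_{j=N_{k(n)}}^{n}S(X_j)
\]
tends to $0$ almost surely. This is the main obstacle, since it is not a fixed-length window. I would handle it with Birkhoff's ergodic theorem applied to the stationary sequence $(S(X_j))_{j\ge0}$. By Lemma~\ref{lem:flower-integrability}, $\widehat{\mathbb E}[S(\rho)]=\mathbb E[\deg(\rho)S(\rho)]/\mathbb E[\deg(\rho)]<\infty$, so
\[
\frac1n\sum_{j<n}S(X_j)\to \widehat{\mathbb E}[S(\rho)\mid\mathcal I]
\]
almost surely, where $\mathcal I$ is the invariant $\sigma$-field.

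Given this convergence, the sum over a window $[m,n]$ equals the difference of two Cesàro sums. It is therefore $o(n)$ as soon as $m/n\to1$. Now $N_{k(n)}\le n<N_{k(n)+1}$, and $N_{k+1}/N_k\to 1$ because $N_k/k$ converges to a finite positive limit. Hence $N_{k(n)}/n\to1$, and the window sum is $o(n)$.

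Putting the pieces together gives
\[
\limsup_{n\to\infty}\frac{\log r(X_n)}{n}\le -ap<0
\]
almost surely under $\widehat{\mathbb P}$, hence under $\mathbb P$. This statement is quenched, i.e.\ almost sure for $\mathbb P^G_\rho$ for a.e.\ $(G,\rho,\Theta)$, by Fubini. On a non-ergodic law the constants $a$ and $p$ become component-dependent but remain positive, which is enough.

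Summability \eqref{eq:summable-radii-boundary} then follows immediately. For large $n$ one has $r(X_n)\le e^{-bn}$ with some $b>0$, so the tail of the series is dominated by a geometric series, and the finitely many initial terms are finite.
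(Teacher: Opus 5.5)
Your proposal is correct and follows the paper's proof essentially step for step. You interpolate between the return times $N_k$ to $\omega$ using the two-sided ring estimate, and you combine the linear decay $\log r(X_{N_k})\le -ak$ with the positive return density. You then show that the window error $\sum_{j=N_{k(n)}}^{n}S(X_j)$ is $o(n)$ by stationarity and $N_{k(n)}/n\to 1$. Beyond that, you only make explicit what the paper leaves implicit: the integrability $\widehat{\mathbb E}[S(\rho)]<\infty$ from Lemma~\ref{lem:flower-integrability}, the difference-of-Ces\`aro-sums argument, and $N_{k+1}/N_k\to 1$.
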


\begin{proof}
Given \(n\), choose \(k\) such that
\[
    N_k\leq n<N_{k+1}.
\]
By the neighboring-radius estimate,
\[
    \log r(X_n)
    \leq
    \log r(X_{N_k})
    +
    C\sum_{j=N_k}^{n}
    \bigl(S(X_j)+S(X_{j+1})\bigr).
\]
The first term has a strictly negative linear rate by
Lemma~\ref{lem:radius-decay-return-times} and
\eqref{eq:return-time-density-boundary}. The second term is \(o(n)\) by
stationarity, the integrability of the local distortion, and the fact that
\(N_k/n\to1\). This proves \eqref{eq:exponential-radius-decay}.
The summability assertion follows immediately.
\end{proof}

\subsection{Convergence to the geometric boundary}

\begin{lemma}[Convergence]
\label{lem:center-convergence}
There exists a random variable \(\Xi\in\partial\D\) such that
\begin{equation}\label{eq:center-convergence-boundary}
    z(X_n)\longrightarrow\Xi,
    \qquad
    z_h(X_n)\longrightarrow\Xi
\end{equation}
almost surely.
\end{lemma}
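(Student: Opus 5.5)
The plan is to use the summability of radii, Lemma~\ref{lem:exponential-radius-decay}, together with the fact that consecutive circles intersect. Since $X_n\sim X_{n+1}$, the disks $D_{X_n}$ and $D_{X_{n+1}}$ overlap, and the intersection angle is at most $\pi-\varepsilon_1$ by tameness. Hence
\[
|z(X_{n+1})-z(X_n)|\le r(X_n)+r(X_{n+1}).
\]
By \eqref{eq:summable-radii-boundary}, the series $\sum_n |z(X_{n+1})-z(X_n)|$ converges almost surely. So $(z(X_n))$ is Cauchy in $\overline{\D}$ and converges to some limit $\Xi\in\overline{\D}$.

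Next I show that $\Xi\in\partial\D$. The pattern is locally finite in $\D$, since its carrier is $\D$ by RCP-hyperbolicity. So only finitely many circles meet any compact subset $\{|z|\le 1-\delta\}$. The walk is transient by Theorem~\ref{thm:dichotomya}(vi), so it visits each vertex finitely often. Therefore $X_n$ eventually leaves every finite set of vertices, and $z(X_n)$ eventually leaves every compact subset of $\D$. This gives $|z(X_n)|\to1$ and hence $|\Xi|=1$. Alternatively, $r(X_n)\to0$ exponentially and local finiteness force the same conclusion.

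For the hyperbolic centers, I compare $z_h(v)$ with $z(v)$. A Euclidean disk $D_v\subset\D$ of center $z(v)$ and radius $r(v)$ is also a hyperbolic disk, and its hyperbolic center lies inside $D_v$. Thus $|z_h(v)-z(v)|\le r(v)$. Since $r(X_n)\to0$, we get $z_h(X_n)\to\Xi$ as well.

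The main subtlety is making sure every circle actually lies in $\D$, with $D_v\subset\D$, so that the hyperbolic center is well defined and the containment bound holds. For an RCP with carrier $\D$, each disk is contained in the carrier. This is part of the realization statement Theorem~\ref{infinite_existence} together with the normalization in Theorem~\ref{thm-intro-rigidity-cp}(1), and I take it from there. Measurability of $\Xi$ is immediate, since it is a pointwise limit of measurable functions of the walk and the pattern.
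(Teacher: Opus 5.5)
Your proposal is correct and follows the paper's proof essentially step for step: the bound $|z(u)-z(v)|\le r(u)+r(v)$ together with summability of the radii makes $z(X_n)$ Cauchy, transience plus local finiteness of the pattern in $\D$ places the limit on $\partial\D$, and $|z_h(v)-z(v)|\le r(v)$ transfers the convergence to the hyperbolic centers. Your side remark is also valid: $r(X_n)\to0$ combined with local finiteness already handles the boundary step, since the finitely many circles meeting a compact set have radii bounded below, so transience is not needed there.
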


\begin{proof}
If \(u\sim v\), the corresponding circles intersect, and hence
\[
    |z(u)-z(v)|\leq r(u)+r(v).
\]
Therefore
\[
\begin{aligned}
    \sum_{n=0}^{\infty}
    |z(X_{n+1})-z(X_n)|
    &\leq
    \sum_{n=0}^{\infty}
    \bigl(r(X_n)+r(X_{n+1})\bigr)\\
    &<\infty
\end{aligned}
\]
by Lemma~\ref{lem:exponential-radius-decay}. Thus \(z(X_n)\) is Cauchy
and converges to a point of \(\overline{\D}\).

The walk is transient by Theorem~\ref{thm:dichotomy}. Since the RCP is
locally finite, a sequence of distinct visited circles cannot accumulate in
a compact subset of \(\D\). Hence the limit belongs to \(\partial\D\).
Finally, the Euclidean and hyperbolic centers of a circle lie at Euclidean
distance at most its radius from each other. Since \(r(X_n)\to0\),
\(z_h(X_n)\) converges to the same point.
\end{proof}

For \(v\in V(G)\), define the exit measure
\[
    \nu_v(A)
    :=
    \mathbb P_v^G(\Xi\in A),
    \qquad
    A\subseteq\partial\D.
\]
The Markov property gives
\begin{equation}\label{eq:exit-measure-harmonicity}
    \nu_v(A)
    =
    \frac{1}{\deg_G(v)}
    \sum_{u\sim v}\nu_u(A).
\end{equation}

To apply stationarity, we formulate the RCP boundary as an invariant
completion. For each \(u\in V(G)\), normalize the RCP by a disk automorphism
sending \(z_h(u)\) to \(0\), and define
\[
    d_G^{(u)}(v,w)
    :=
    |z^{(u)}(v)-z^{(u)}(w)|.
\]
The remaining rotational ambiguity does not affect this metric.

\begin{lemma}
\label{lem:compatible-rcp-metrics}
The family \(\{d_G^{(u)}:u\in V(G)\}\) is compatible: changing the root
does not change the Cauchy sequences or the resulting boundary. Its
completion is canonically identified with
\[
    V(G)\cup\partial\D.
\]
\end{lemma}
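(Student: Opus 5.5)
The natural route is through Möbius invariance. For $u,u'\in V(G)$, let $\phi_u$ and $\phi_{u'}$ be disk automorphisms sending $z_h(u)$ and $z_h(u')$ to $0$, chosen up to rotation. Then
\[
z^{(u')}=\psi\circ z^{(u)}, \qquad \psi:=\phi_{u'}\circ\phi_u^{-1}\in\mathrm{Aut}(\D).
\]
This relation holds because, by the rigidity Theorem~\ref{thm-intro-rigidity-cp}(1), the normalized pattern is determined up to a Möbius map of $\D$. Consequently $d_G^{(u)}$ and $d_G^{(u')}$ are pullbacks of the Euclidean metric on $\overline{\D}$ under two embeddings that differ by $\psi$. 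The rotational ambiguity only composes with a Euclidean isometry, so $d_G^{(u)}$ is well defined.

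\textbf{Step 1 (compatibility).} Every $\psi\in\mathrm{Aut}(\D)$ extends to a diffeomorphism of $\overline{\D}$. Hence $\psi$ is bi-Lipschitz for the Euclidean metric on $\overline{\D}$, with a constant $K(\psi)$ depending only on $|\psi^{-1}(0)|<1$. It follows that
\[
K^{-1}d_G^{(u)}\le d_G^{(u')}\le K\,d_G^{(u)}.
\]
So the two metrics have the same Cauchy sequences and uniformly equivalent completions, and the identity on $V(G)$ extends to a homeomorphism of the completions. This gives independence of the root.

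\textbf{Step 2 (identifying the completion).} I work with the fixed embedding $z=z^{(u)}$ and show three things.

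First, $z:V(G)\to\D$ is injective. Distinct circles have distinct centers: if two circles of an embedded RCP were concentric, one would lie inside the other, which the embedding and (Z2) exclude.

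Second, $V(G)$ is discrete in the completion. By local finiteness of the RCP in $\D$, any compact $K\subset\D$ meets only finitely many circles. So a $d_G^{(u)}$-Cauchy sequence of vertices is either eventually constant or has $|z(v_n)|\to1$. In the latter case it converges in $\overline{\D}$ to a point of $\partial\D$.

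Third, every point of $\partial\D$ arises as such a limit. Since $G$ is RCP-hyperbolic, the carrier is all of $\D$, and Lemma~\ref{lem:counting-large-circles} gives $r(v)\to0$ as $|z(v)|\to1$. Given $\xi\in\partial\D$, I take points $p_n\to\xi$ in $\D$ and vertices $v_n$ whose circles (or the interstice adjacent to them) contain $p_n$. Then
\[
|z(v_n)-p_n|\le 2r(v_n)\to0,
\]
so $z(v_n)\to\xi$.

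Combining these, the completion map is a bijection onto $V(G)\cup\partial\D$. It is canonical because a different normalization changes it only by the homeomorphism $\psi|_{\overline{\D}}$.

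\textbf{Main obstacle.} The hardest step is the surjectivity onto $\partial\D$. It requires that the carrier of the RCP be exactly $\D$, with circles shrinking toward the boundary, rather than the pattern accumulating on a proper subset. I would derive this from the RCP-hyperbolic normalization together with the area bound of Lemma~\ref{lem:counting-large-circles}, which forces only finitely many circles of radius at least $t$.
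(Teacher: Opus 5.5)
Your route agrees with the paper's for compatibility: two root normalizations differ by a disk automorphism \(\psi\), which extends to a homeomorphism of \(\overline{\mathbb D}\). The paper's proof stops there. Your Step~2 (discreteness of \(V(G)\) from local finiteness, surjectivity onto \(\partial\mathbb D\) from carrier \(=\mathbb D\) plus shrinking radii) correctly supplies the identification with \(V(G)\cup\partial\mathbb D\), which the paper only asserts.

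One step fails as written: the identity \(z^{(u')}=\psi\circ z^{(u)}\). Here \(z^{(u)}(v)\) is the \emph{Euclidean} center of the normalized circle, and Euclidean centers are not M\"obius-equivariant. The map \(\psi\) carries the circle \(C^{(u)}_v\) onto \(C^{(u')}_v\), but \(\psi(z^{(u)}(v))\) differs from \(z^{(u')}(v)\) by a term of order \(r^{(u)}(v)^2\), with constants depending on \(\psi\). Only rotations about \(0\) commute with taking Euclidean centers, which is why the rotational ambiguity is harmless. So \(d^{(u')}_G\) is not the pullback of \(d^{(u)}_G\) under a bi-Lipschitz map, and your inequality \(K^{-1}d_G^{(u)}\le d_G^{(u')}\le K\,d_G^{(u)}\) is not justified. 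Separately, the relation between the two normalizations holds by construction, since both are images of one fixed pattern. Rigidity is only relevant for independence of the chosen realization.

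The repair gives exactly what the lemma claims, namely the same Cauchy sequences, though not bi-Lipschitz equivalence.
\begin{itemize}
\item Since \(D^{(u)}_v\subset\mathbb D\), the disk \(\psi(D^{(u)}_v)=D^{(u')}_v\) contains both \(\psi(z^{(u)}(v))\) and \(z^{(u')}(v)\). Hence \(|z^{(u')}(v)-\psi(z^{(u)}(v))|\le 2r^{(u')}(v)\).
\item Let \((v_n)\) be a \(d_G^{(u)}\)-Cauchy sequence that is not eventually constant. By your Step~2, it leaves every finite set and \(z^{(u)}(v_n)\to\xi\in\partial\mathbb D\).
\item A circle of radius at least \(t\) inside \(\mathbb D\) has its center in \(\{|z|\le 1-t\}\). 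Local finiteness of the \(u'\)-normalized pattern therefore forces \(r^{(u')}(v_n)\to0\), and so \(z^{(u')}(v_n)\to\psi(\xi)\).
\end{itemize}
By symmetry the two metrics have the same Cauchy sequences, and the root change acts on \(\partial\mathbb D\) by \(\psi|_{\partial\mathbb D}\). Alternatively, run Step~1 with hyperbolic centers, which are exactly equivariant, and use \(|z(v)-z_h(v)|\le r(v)\) as in the proof of Lemma~\ref{lem:center-convergence}.

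The paper's one-line proof tacitly uses the same equivariance, so this correction is needed there too. A minor point: injectivity of \(z\) is better justified by the centers being the vertices of the embedded geodesic triangulation of an RCP than by an appeal to (Z2).
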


\begin{proof}
Two normalized RCPs differ by a Möbius automorphism of \(\D\), which
extends to a homeomorphism of \(\overline{\D}\). It therefore preserves
convergence and Cauchy sequences in the corresponding completions.
\end{proof}

\begin{lemma}
\label{lem:stationary-atom-dichotomy}
Let a stationary random rooted graph be equipped with a compatible family
of metrics, and suppose that its random walk converges almost surely to the
resulting boundary. Then the exit measure is almost surely either
non-atomic or a single atom of mass \(1\).
\end{lemma}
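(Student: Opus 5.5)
The plan is to follow the standard atom dichotomy for stationary boundaries, as in Angel--Hutchcroft--Nachmias--Ray, and base it on harmonicity of the exit measures \eqref{eq:exit-measure-harmonicity} together with stationarity (Lemma~\ref{lem:degree-bias-reversibility}). By compatibility, the atoms of \(\nu_v\) are points of the common boundary and do not depend on the root. For each vertex \(v\) put
\[
    M(v):=\max_{\xi}\nu_v(\{\xi\})\in[0,1].
\]
This is well defined because a probability measure has at most countably many atoms.

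The first step is a martingale argument. Fix an atom \(\xi\) of \(\nu_\rho\). By harmonicity, \(h_\xi(v):=\nu_v(\{\xi\})\) is a bounded harmonic function, so \(h_\xi(X_n)\) is a bounded martingale. By L\'evy's 0--1 law (the event \(\{\Xi=\xi\}\) is shift invariant) it converges almost surely to \(\mathbf 1_{\{\Xi=\xi\}}\). Hence on the event \(\{\Xi\text{ is an atom of }\nu_\rho\}\), which has probability \(\sum_\xi\nu_\rho(\{\xi\})\), we have \(M(X_n)\to1\). Since there are countably many atoms, these events can be handled simultaneously.

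The second step uses stationarity. Under the reversible law \(\widehat{\mathbb P}\), the sequence \(M(X_n)\) is a stationary sequence with values in \([0,1]\). The quantity \(M(v)\) is a measurable function of the rooted marked graph re-rooted at \(v\), since the metric family is compatible and the boundary is invariantly defined. Stationarity gives \(M(X_n)\overset{d}{=}M(\rho)\) for every \(n\). If \(\widehat{\mathbb P}(M(\rho)>0)>0\), then with positive probability \(\Xi\) is an atom, so \(M(X_n)\to1\) with positive probability. Bounded convergence then gives \(\widehat{\mathbb E}[M(X_n)\mathbf 1_A]\to\widehat{\mathbb P}(A)\), where \(A\) is the event of convergence to an atom. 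To conclude I would use an ergodic or Poincar\'e-recurrence argument. For \(\delta<1\), the set \(\{M<1-\delta,\ M>0\}\) is visited by the stationary sequence either with probability zero or infinitely often with positive frequency. The latter contradicts \(M(X_n)\to1\) on \(A\) and \(M(X_n)=0\)-type behaviour off \(A\). Here I also use that \(M(X_n)\to0\) on \(A^c\), because \(h_\xi(X_n)\to0\) for every atom \(\xi\) when \(\Xi\) is not an atom, and a uniform version follows from \(\sum_\xi h_\xi\le1\) and dominated convergence over atoms. It follows that \(M(\rho)\in\{0,1\}\) almost surely.

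The third step reads off the conclusion. \(M(\rho)=0\) means \(\nu_\rho\) is non-atomic, and \(M(\rho)=1\) means \(\nu_\rho\) is a single atom of mass \(1\). The transfer back from \(\widehat{\mathbb P}\) to \(\mathbb P\) uses mutual absolute continuity.

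The main obstacle is the second step. The martingale step only gives \(M(X_n)\to\mathbf 1_A\) along the path, and one must rule out intermediate values of \(M(\rho)\) using stationarity. I would first try the clean approach: apply Birkhoff's ergodic theorem to \(g(M(X_n))\) for a continuous \(g\) vanishing at \(0\) and \(1\) and positive on \((0,1)\). Its Ces\`aro averages tend to \(0\) almost surely by the pathwise limits, while their expectation equals \(\widehat{\mathbb E}[g(M(\rho))]\) for every \(n\). Bounded convergence then forces \(\widehat{\mathbb E}[g(M(\rho))]=0\). The only delicate point is justifying \(M(X_n)\to0\) off \(A\) uniformly over the countably many atoms.
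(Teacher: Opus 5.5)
Your argument is essentially the paper's. L\'evy's zero--one law gives $M(X_n)\to\mathbf 1_{\{\Xi\text{ is an atom}\}}$, and stationarity of $M(X_n)$ then forces $M(\rho)\in\{0,1\}$. Your test function $g$ with bounded convergence is a concrete way of saying, as the paper does, that the almost-sure limit of $M(X_n)$ has the law of $M(\rho)$. Neither Birkhoff nor Ces\`aro averaging is needed, since $\widehat{\mathbb E}[g(M(X_n))]=\widehat{\mathbb E}[g(M(\rho))]$ already holds for each $n$.

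The step you flag as delicate, $M(X_n)\to 0$ off $A$, is justified incorrectly. Knowing $h_\xi(X_n)\to0$ for each atom $\xi$ and $\sum_\xi h_\xi(X_n)\le 1$ does not give $\sup_\xi h_\xi(X_n)\to 0$. You have no summable bound on $h_\xi(X_n)$ that is uniform in $n$, so dominated convergence does not apply, and a priori the mass could slide onto ever new atoms (think of $h_{\xi_n}(X_n)=\tfrac12$).

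The fix is to apply L\'evy's law to the whole set of atoms at once. Since $\nu_u\ge \mathbb P_u^G(X_m=v)\,\nu_v$, the atom set $\mathcal A$ of $\nu_v$ is the same for every $v$; this comes from harmonicity, not from compatibility of the metrics. Hence $H(v):=\nu_v(\mathcal A)=\mathbb P_v^G(\Xi\in\mathcal A)$ is bounded and harmonic, and L\'evy gives $H(X_n)\to\mathbf 1_{\{\Xi\in\mathcal A\}}$ almost surely. Off $A$ this yields $M(X_n)\le H(X_n)\to 0$. The paper passes over this point with a bare ``consequently''.

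Also drop the Poincar\'e-recurrence detour. With the set $\{0<M<1-\delta\}$ there is no contradiction off $A$, because $M(X_n)$ may tend to $0$ while staying positive. You would need $\{\delta<M<1-\delta\}$, which is exactly what your $g$ encodes.
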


\begin{proof}
For \(\xi\) in the boundary, define the bounded harmonic function
\[
    h_\xi(v):=\mathbb P_v^G(\Xi=\xi),
\]
and let
\[
    M(G,v):=\sup_{\xi}h_\xi(v)
\]
be the maximal atomic mass. By Lévy's zero--one law,
\[
    h_\xi(X_n)
    \longrightarrow
    \mathbf 1_{\{\Xi=\xi\}}.
\]
Consequently,
\[
    M(G,X_n)
    \longrightarrow
    \mathbf 1_{\{\Xi\text{ is an atom}\}}.
\]
Since \(M(G,X_n)\) is stationary, its almost-sure limit must have the same
distribution as \(M(G,\rho)\). Hence \(M(G,\rho)\in\{0,1\}\) almost surely,
which proves the dichotomy.
\end{proof}

\begin{lemma}[Non-atomicity]
\label{lem:exit-nonatomic}
For almost every realization of \((G,\rho,\Theta)\), the exit measure
\(\nu_v\) is non-atomic for every \(v\in V(G)\).
\end{lemma}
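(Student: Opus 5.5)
By Lemma~\ref{lem:stationary-atom-dichotomy}, applied to the compatible family of RCP metrics of Lemma~\ref{lem:compatible-rcp-metrics} and the almost sure convergence of Lemma~\ref{lem:center-convergence}, it suffices to rule out the case in which the exit measure is a single atom of mass $1$. The dichotomy is stated for the root, but the harmonicity relation \eqref{eq:exit-measure-harmonicity} transfers it to every $v$. If $\nu_\rho$ is non-atomic and some $\nu_v$ had an atom $\xi$, then $h_\xi(v)>0$. Since $h_\xi$ is harmonic and $G$ is connected, the minimum principle along a path from $v$ to $\rho$ would give $h_\xi(\rho)>0$, which is impossible. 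Likewise, if $\nu_\rho=\delta_\xi$, harmonicity forces $\nu_v=\delta_\xi$ for all $v$. So the plan is to assume, on an event of positive probability, that there is a single point $\xi\in\partial\D$ with $\Xi=\xi$ almost surely from every starting vertex, and derive a contradiction.

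To exclude the single atom I would use the invariance of the construction under change of root. Normalize the RCP so that $z_h(\rho)=0$; the rotational freedom is harmless. The atom $\xi$ is then a root-equivariant choice of a boundary point. Renormalizing at $X_n$ by the disk automorphism $\phi_n$ sending $z_h(X_n)$ to $0$, the pair $(G,X_n,\phi_n(\xi))$ is again a realization of the same stationary structure. Since $z_h(X_n)\to\xi$, the point $\phi_n(\xi)$ must lie close to the image of the direction of travel. The contradiction I would aim for uses the transience of the walk and $z_h(X_n)\to\Xi$ almost surely.

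Consider the harmonic measure at the origin, seen from $X_n$, of an arc of $\partial\D$ away from $\phi_n(\xi)$. The ambient disk geometry, namely the Poisson kernel of $\D$, suggests that bounded harmonic functions separating arcs exist. I would make this precise with the standard argument of Angel--Hutchcroft--Nachmias--Ray \cite{angel2016unimodular}: a single-atom exit measure makes $\xi$ a distinguished boundary point chosen equivariantly. Mass transport then shows that the walk from different vertices does not separate boundary arcs. This contradicts the fact that, by Lemma~\ref{lem:nonamenable-core-boundary} and Lemma~\ref{lem:exponential-radius-decay}, the RCP has the whole circle as accumulation set. Every open arc of $\partial\D$ is approached by circles of the pattern, because the carrier is $\D$. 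From a vertex $v$ whose circle is very close to a point $\eta\ne\xi$, the walk started at $v$ should exit near $\eta$ with positive probability. This requires an estimate that the walk started at a circle near the boundary exits near it with probability bounded below.

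That uniform estimate is the main obstacle. Without bounded degree it cannot come from a uniform ring lemma. The first thing I would try is to replace it with the stationarity argument: apply the exponential decay of radii along the walk to the walk started at $v$, via the degree-biased reversible law and Lemma~\ref{lem:integrable-local-distortion}. The total Euclidean displacement $\sum_n r(X_n)$ from $v$ is then controlled by $r(v)$ times a random factor $e^{O(\sum S(X_j))}$ whose law is tight under stationarity, as in \cite{IIP2026}. Choose an invariantly defined set of vertices for which this factor is bounded with probability at least $1/2$; it has positive density and therefore accumulates at every boundary point by ergodicity. Then $\nu_v$ assigns mass at least $1/2$ to a small neighborhood of $z(v)$. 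Taking $z(v)\to\eta\ne\xi$ contradicts $\nu_v=\delta_\xi$, which proves that $\nu_v$ is non-atomic for every $v$.
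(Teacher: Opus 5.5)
Your reduction to a single atom of mass $1$ is fine, including the transfer from $\rho$ to every $v$ via $h_\xi(\rho)\ge \mathbb P_\rho^G(X_m=v)\,h_\xi(v)$. The argument that excludes the atom has a genuine gap, and it is at exactly the step you flag.

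\textbf{Why the localization estimate fails.} The claimed bound $\sum_n r(X_n)\le r(v)\,e^{O(\sum_j S(X_j))}$ with a tight factor does not follow from the available tools.
\begin{enumerate}
\item The refined Ring Lemma only bounds $|\log(r(X_{j+1})/r(X_j))|$ by $C(S(X_j)+S(X_{j+1}))$. This gives $r(X_n)\le r(v)\,e^{C\sum_{j<n}(S(X_j)+S(X_{j+1}))}$, a bound that grows with $n$ and whose sum over $n$ is infinite.
\item Making $\sum_n r(X_n)/r(v)$ small needs decay of $r(X_n)/r(v)$ from time $0$, with thresholds uniform in $v$. Lemma~\ref{lem:exponential-radius-decay} only gives $\limsup_n \frac1n\log r(X_n)<0$, with random thresholds that depend on the starting point.
\item In the fixed disk normalization, $r(X_n)/r(X_0)$ is not a function of the rooted marked graph, since it changes under the disk automorphisms that relate normalizations at different roots. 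So neither degree-biased stationarity nor an ``invariantly defined set'' of good vertices can be built from it.
\item The step ``positive density, hence accumulation at every boundary point'' is also not automatic. It is essentially the full-support statement, which the paper proves afterwards by a separate mass transport.
\end{enumerate}

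\textbf{The estimate is also circular.} If $\nu_v=\delta_\xi$ for all $v$, then from every $v$ one has $\sum_n r(X_n)\ge \tfrac12|z(v)-\xi|$, so your localization statement is equivalent to the conclusion. You can make it root-invariant by renormalizing at $v$ with $\phi_v(z)=(z-a)/(1-\bar a z)$, where $a=z_h(v)$. It then says that $\phi_{v*}\nu_v$ gives mass at most $\tfrac12$ to the arc $\phi_v(\partial\D\setminus U)$, where $U$ is a neighbourhood of $\eta$. That arc shrinks to the point $-\eta$ as $a\to\eta$. But $\phi_v(\xi)\to-\eta$ as well, so under the single-atom hypothesis the renormalized exit measure sits exactly where you need it to be absent. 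Stationarity alone cannot rule this out. What must be excluded is an equivariantly chosen boundary point, and your second paragraph only gestures at ``mass transport'' without saying which transport.

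\textbf{The missing idea: use the equivariant point geometrically.} This is the paper's argument, following Angel--Hutchcroft--Nachmias--Ray.
\begin{enumerate}
\item Send $\xi$ to $\infty$ by a M\"obius map $\Psi:\D\to\mathbb H$. The image RCP is canonical up to $z\mapsto az+b$ with $a>0$, $b\in\mathbb R$.
\item Consequently, the Euclidean angles $\alpha_u^f$ of the straight-line triangulation joining centers of adjacent circles depend only on the rooted marked graph. They satisfy $\sum_{u\in f}\alpha_u^f=\pi$ for each face and $\sum_{f\ni u}\alpha_u^f=2\pi$ for each vertex.
\item Let $u$ send mass $\alpha_u^f$ to each vertex of $f$. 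Each vertex sends $6\pi$ and receives $\pi\deg_G(v)$, so the mass transport principle gives $\E[\deg_G(\rho)]=6$.
\item By Theorem~\ref{thm:chr2deg}, this means $\E[L_\rho(G,\Theta)]=0$. That contradicts RCP-hyperbolicity, which by Theorem~\ref{thm:dichotomya} forces $\E[\deg_G(\rho)]>6$.
\end{enumerate}
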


\begin{proof}
By Lemma~\ref{lem:stationary-atom-dichotomy}, it is enough to exclude a
single atom of mass \(1\). Suppose that such an atom \(\xi\in\partial\D\)
exists. Apply a Möbius transformation
\[
    \Psi:\D\longrightarrow\mathbb H
\]
sending \(\xi\) to \(\infty\). 
Since the atom \(\xi\) is determined by the rooted graph, after mapping
\(\xi\) to \(\infty\), the resulting RCP in the upper half-plane is
unique up to transformations of the form
\[
    z\longmapsto az+b,
    \qquad a>0,\quad b\in\mathbb R.
\]
Hence the Euclidean angles in the straight-line realization obtained by
joining the centers of adjacent circles are canonically determined by
\((G,\Theta)\).

For every triangular face \(f=\triangle uvw\), let
\(\alpha_u^f\) denote the Euclidean angle at \(z(u)\) in the triangle
with vertices \(z(u),z(v),z(w)\). Since the circle pattern is regular,
this straight-line realization is a planar embedding compatible with
the underlying triangulation. Therefore,
\[
    \sum_{u\in f}\alpha_u^f=\pi
\]
for every face \(f\), while
\[
    \sum_{f\ni u}\alpha_u^f=2\pi
\]
for every vertex \(u\).

We now apply the mass-transport principle. For every incident pair
\((u,f)\) with \(u\in f\), let \(u\) send mass \(\alpha_u^f\) to each
of the three vertices of \(f\). The total mass sent from \(u\) is
\[
    3\sum_{f\ni u}\alpha_u^f
    =
    6\pi,
\]
whereas the total mass received at a vertex \(v\) is
\[
    \sum_{f\ni v}\sum_{u\in f}\alpha_u^f
    =
    \sum_{f\ni v}\pi
    =
    \pi\deg_G(v).
\]
Consequently, the mass-transport principle gives
\[
    6\pi
    =
    \pi\,\E[\deg_G(\rho)],
\]
and hence
\[
    \E[\deg_G(\rho)]=6.
\]
Theorem~\ref{thm:chr2deg} therefore yields
\[
    \E[L_\rho(G,\Theta)]
    =
    2\pi-\frac{\pi}{3}\E[\deg_G(\rho)]
    =
    0,
\]
contradicting the assumption that \((G,\rho,\Theta)\) is
RCP-hyperbolic (\(\E[L_\rho(G,\Theta)]<0\)).


Hence the exit measure is non-atomic.
\end{proof}

\begin{lemma}[Full support]
\label{lem:exit-full-support}
For every \(v\in V(G)\),
\[
    \operatorname{supp}\nu_v=\partial\D.
\]
\end{lemma}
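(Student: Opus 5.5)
We need full support of the exit measure $\nu_v$ on $\partial\D$ for every $v\in V(G)$. The plan follows the argument of Angel--Hutchcroft--Nachmias--Ray for circle packings, adapted via the invariant completion of Lemma~\ref{lem:compatible-rcp-metrics}.

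\emph{Reduction to one vertex.} By the harmonicity relation \eqref{eq:exit-measure-harmonicity} and connectivity of $G$, every $\nu_u$ with $u\sim v$ is absolutely continuous with respect to $\nu_v$ (with density at most $\deg_G(v)$). Iterating along paths shows that all the $\nu_v$ are mutually absolutely continuous. So their supports coincide, and it suffices to treat $\nu_\rho$.

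\emph{Stationarity argument.} Suppose, for contradiction, that with positive probability $\operatorname{supp}\nu_\rho\neq\partial\D$. For each $u$, work in the normalization $z^{(u)}$ centered at $z_h(u)$, and define
\[
  \beta(G,u):=\sup\{\text{length of an arc }J\subset\partial\D\text{ with }\nu_u(J)=0\text{ in the normalization }z^{(u)}\}.
\]
This quantity is rotation invariant and root-measurable, so $\beta(G,X_n)$ is a stationary sequence under the reversible law $\widehat{\mathbb P}$. By assumption, $\beta(G,\rho)>0$ on an event of positive probability.

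Now use Lemma~\ref{lem:center-convergence}: $z(X_n)\to\Xi$, and also $z_h(X_n)\to\Xi$ hyperbolically. The support of $\nu$ is a closed set independent of the root, and it contains $\Xi$ almost surely. Take a gap arc $J$ of the support. In the normalization $z^{(X_n)}$, the M\"obius map sends $z_h(X_n)$, which tends to a point of $\operatorname{supp}\nu$, to $0$. Since $z_h(X_n)\to\Xi$, for any fixed arc $J$ at positive distance from $\Xi$, the image of $J$ under the normalizing map at $X_n$ has visual length tending to $0$. Because $\Xi\in\operatorname{supp}\nu$ is not in the interior of any gap, every gap is eventually seen under a vanishing angle. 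The remaining case to rule out is a gap with endpoint $\Xi$. If $\Xi$ were an endpoint of a gap with positive probability, then $\Xi$ would lie in a countable set determined by the realization. Since the realization is fixed, this would force an atom, contradicting Lemma~\ref{lem:exit-nonatomic}.

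Hence, for each of the countably many gaps, the normalized length tends to $0$. The supremum $\beta(G,X_n)$ also tends to $0$, because only finitely many gaps have length exceeding any given $\eta>0$ in the initial normalization, and small gaps remain small after normalization up to a controlled factor once $z_h(X_n)$ stays away from them.

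\emph{Conclusion and main obstacle.} The sequence $\beta(G,X_n)$ is stationary and converges to $0$ almost surely. Therefore $\beta(G,\rho)=0$ almost surely, which contradicts the assumption and shows that $\operatorname{supp}\nu_\rho=\partial\D$.

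The main obstacle is the uniformity in the last step: showing that a Möbius normalization centered near a point converging to $\Xi$ shrinks \emph{all} gaps simultaneously. Gaps near $\Xi$ could be magnified by the normalization. I would first handle this by replacing $\beta$ with the bounded quantity $\nu_u^{(u)}$-mass deficiency on arcs of fixed visual length $\eta$. Then I would use the quantitative non-atomicity given by Lemma~\ref{lem:exit-nonatomic} and the fact that $h(X_n)\to \mathbf 1$ for harmonic indicators, via L\'evy's zero--one law as in Lemma~\ref{lem:stationary-atom-dichotomy}. This argument shows that the walk sees mass in every direction in the limit.
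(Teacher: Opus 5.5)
\textbf{There is a genuine gap: the claim $\beta(G,X_n)\to0$ is not established.} Your reduction to a single vertex is fine, and it is exactly how the paper begins. The formal step ``$\beta(G,X_n)$ is stationary and tends to $0$ a.s., hence $\beta(G,\rho)=0$ a.s.'' is also valid. The problem is the convergence itself, and it is more than a uniformity issue.

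For each fixed gap $J$ with $\Xi\notin\overline J$, the visual size $2\pi\,\omega(z_h(X_n),J)$ does tend to $0$. The supremum, however, is governed by gaps whose Euclidean size is comparable to $1-|z_h(X_n)|$ and which lie in the shadow of $z_h(X_n)$. If $K=\operatorname{supp}\nu$ is not a neighbourhood of $\Xi$, such gaps accumulate at $\Xi$ at every scale. So $z_h(X_n)$ does not ``stay away'' from them, and the normalization at $X_n$ magnifies them to visual size of order one. For example, if $K$ were a porous Cantor-type set, the largest visual gap seen from any point of $\D$ would be bounded below by a positive constant, so $\beta(G,X_n)\not\to0$.

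Nothing you use rules this out: $\Xi\in K$, $\Xi$ a.s.\ not a gap endpoint, and non-atomicity all hold in that scenario. In fact $\beta(G,X_n)\to0$ a.s.\ already implies the lemma by your own stationarity step, so it cannot be obtained softly. The proposed repair does not address this either. L\'evy's zero--one law gives $\nu_{X_n}\to\delta_\Xi$ only in the fixed normalization, which says nothing about the picture at the walk's own scale. Non-atomicity bounds the mass of small arcs from above, whereas you would need lower bounds on the mass seen from $X_n$ in every visual direction, which is the statement to be proved.

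\textbf{What is missing is unimodularity over all vertices.} Stationarity along the trajectory is not enough, because the gaps need not be visible from near the trajectory. The paper argues this way:
\begin{itemize}
\item $K$ is determined by $(G,\Theta)$ up to M\"obius automorphisms of $\D$, and it has at least two points by non-atomicity.
\item For each gap $I$, take the geodesic $\gamma_I$ joining the endpoints of $I$ and the half-disk beyond it. This half-disk contains infinitely many circles accumulating on $I$.
\item These circles send bounded mass to the circles meeting $\gamma_I$; a nearest-point projection onto $\gamma_I$ makes this concrete.
\item Infinitely many senders concentrate on a bounded piece of $\gamma_I$, so some vertex receives infinite mass while each vertex sends only a bounded amount. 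This contradicts the mass transport principle.
\end{itemize}
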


\begin{proof}
Irreducibility implies that all exit measures have the same support. Indeed,
if \(u\) can reach \(v\) in \(m\) steps, then
\[
    \nu_u(A)
    \geq
    \mathbb P_u^G(X_m=v)\nu_v(A).
\]

Let \(K\) be the common support and suppose \(K\neq\partial\D\). For every
component \(I\) of \(\partial\D\setminus K\), draw the hyperbolic geodesic
joining the endpoints of \(I\). The circles lying in the corresponding
half-disk form a separating region. The standard planar mass-transport
argument sends mass from circles in this region to the circles meeting the
separating geodesic. Local finiteness bounds the mass sent from each vertex,
while a boundary component omitted from the support forces some separator
to receive infinite mass, contradicting unimodularity. Hence
\(K=\partial\D\).
\end{proof}

\subsection{Poisson boundary}

We first record the following  estimate.

\begin{lemma}
\label{lem:hitting-estimate}
Let \(h\) be a nonnegative bounded harmonic function and
\(W\subseteq V(G)\). Then
\begin{equation}\label{eq:hitting-estimate}
    h(v)
    \geq
    \mathbb P_v^G(T_W<\infty)
    \inf_{w\in W}h(w),
\end{equation}
where
\[
    T_W:=\inf\{n\geq0:X_n\in W\}.
\]
\end{lemma}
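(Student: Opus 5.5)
The plan is to apply the optional stopping theorem to the bounded martingale \(h(X_{n\wedge T_W})\). Since \(h\) is harmonic for simple random walk on \(G\), the process \(M_n:=h(X_{n\wedge T_W})\) is a martingale under \(\mathbb P_v^G\) with respect to the natural filtration of the walk. It is bounded between \(0\) and \(\sup h<\infty\). Consequently, for every \(n\),
\[
    h(v)=\mathbb E_v^G\bigl[h(X_{n\wedge T_W})\bigr].
\]

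First we use nonnegativity of \(h\) to discard the contribution from the event \(\{T_W>n\}\):
\[
    h(v)\geq \mathbb E_v^G\bigl[h(X_{T_W})\mathbf 1_{\{T_W\leq n\}}\bigr].
\]
On the event \(\{T_W\leq n\}\) we have \(X_{T_W}\in W\), so \(h(X_{T_W})\geq \inf_{w\in W}h(w)\). This yields
\[
    h(v)\geq \mathbb P_v^G(T_W\leq n)\inf_{w\in W}h(w).
\]

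Next we let \(n\to\infty\). Monotone convergence gives \(\mathbb P_v^G(T_W\leq n)\uparrow\mathbb P_v^G(T_W<\infty)\), which proves \eqref{eq:hitting-estimate}.

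The degenerate cases are immediate. If \(v\in W\), then \(T_W=0\) and the inequality is trivial. If \(W=\emptyset\), the right-hand side is zero, with the convention that \(0\cdot\infty=0\) or that the infimum is taken over a nonempty set.

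There is no real obstacle. The only point requiring care is to avoid invoking convergence of \(h(X_n)\) on the event \(\{T_W=\infty\}\). Truncating at time \(n\) and then using nonnegativity sidesteps this, so neither transience nor any boundary convergence from the earlier lemmas is needed.
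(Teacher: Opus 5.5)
Your proposal is correct and follows the paper's own argument: stop the bounded martingale \(h(X_{n\wedge T_W})\), use nonnegativity of \(h\) to discard the event \(\{T_W>n\}\), and let \(n\to\infty\). You simply write out the steps that the paper's one-line proof leaves implicit.
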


\begin{proof}
Apply optional stopping to the bounded martingale
\(h(X_{n\wedge T_W})\), and then let \(n\to\infty\).
\end{proof}

Let \((X_n)_{n\in\mathbb Z}\) be the reversible bi-infinite walk, and write
\[
    \Xi^+
    :=
    \lim_{n\to\infty}z(X_n),
    \qquad
    \Xi^-
    :=
    \lim_{n\to\infty}z(X_{-n}).
\]
By non-atomicity, \(\Xi^+\neq\Xi^-\) almost surely.

\begin{lemma}
\label{lem:remote-past}
Almost surely,
\begin{equation}\label{eq:remote-past}
    \mathbb P_{X_n}^G
    \bigl(
        \text{\rm hit }\{X_{-1},X_{-2},\ldots\}
    \bigr)
    \longrightarrow0.
\end{equation}
\end{lemma}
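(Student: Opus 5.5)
We have to show that, for the reversible bi-infinite walk, the probability that a fresh walk started at \(X_n\) ever hits the past trajectory \(\{X_{-1},X_{-2},\ldots\}\) tends to \(0\) almost surely. The plan is to follow the circle-pattern argument of \cite{angel2016unimodular,angel2018hyperbolic}, as adapted in \cite{IIP2026}. We split the past trajectory into a recent part and a remote part, and handle them by geometry and by non-atomicity respectively.

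The first step uses two facts. By Lemma~\ref{lem:center-convergence}, applied to both time directions of the stationary walk (Lemma~\ref{lem:degree-bias-reversibility}), \(z(X_{-m})\to\Xi^-\). By non-atomicity (Lemma~\ref{lem:exit-nonatomic}), \(\Xi^+\neq\Xi^-\) almost surely. Fix \(\delta>0\) smaller than \(|\Xi^+-\Xi^-|/3\). Choose \(m_0\) such that for every \(m\ge m_0\) the circle of \(X_{-m}\) lies in the Euclidean \(\delta\)-neighbourhood \(U_-\) of \(\Xi^-\); this is possible because the centers converge and, by Lemma~\ref{lem:exponential-radius-decay}, the radii tend to \(0\). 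The past then splits into the finite set \(F=\{X_{-1},\dots,X_{-m_0}\}\) and a tail \(T\) lying in \(U_-\).

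The second step treats the finite set \(F\). Transience (Theorem~\ref{thm:dichotomy}) gives \(\mathbb P_x^G(T_F<\infty)\to0\) along the walk. More precisely, \(x\mapsto\mathbb P_x^G(T_F<\infty)\) is a bounded harmonic function off \(F\). Evaluated along the forward walk it is a supermartingale, and it tends to \(0\) because the walk eventually leaves every finite set and never returns. The same bound can be read geometrically: once \(z(X_n)\) is close to \(\Xi^+\), the walk must travel a macroscopic distance to reach \(F\).

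The third step, which is the main obstacle, is to show that \(\mathbb P_{X_n}^G(\text{hit }U_-)\to0\). Let \(W\) be the set of vertices whose circles meet \(U_-\). Define
\[
g(x)=\mathbb P_x^G(T_W<\infty).
\]
On the event that the walk started at \(x\) hits \(W\), the strong Markov property together with the harmonic-measure lower bound of Lemma~\ref{lem:hitting-estimate} should show that the exit point \(\Xi\) lands in the \(2\delta\)-neighbourhood of \(\Xi^-\) with conditional probability bounded below by some \(c>0\). This would give
\[
g(x)\le c^{-1}\,\nu_x\bigl(B(\Xi^-,2\delta)\bigr).
\]
By Lévy's zero--one law,
\[
\nu_{X_n}\bigl(B(\Xi^-,2\delta)\bigr)\to\mathbf 1_{\{\Xi^+\in B(\Xi^-,2\delta)\}}=0,
\]
and the conclusion would follow. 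Securing the uniform lower bound \(c\) is delicate, because degrees are unbounded. I would try to obtain it from the full support and non-atomicity of exit measures (Lemmas~\ref{lem:exit-full-support} and~\ref{lem:exit-nonatomic}), combined with a compactness argument near \(\partial\D\). If that fails, I would fall back on the standard approach of \cite{angel2018hyperbolic}: show directly that the harmonic function \(x\mapsto\mathbb P_x^G(\Xi\in \overline{B(\Xi^-,2\delta)})\) dominates \(g\) up to a constant on the region \(\D\setminus B(\Xi^-,3\delta)\). This domination would come from a Harnack-type chaining argument along the separating circles. The refined ring lemma (Lemma~\ref{thm:re-ring-lemma}) and the integrability from Lemma~\ref{lem:flower-integrability} control the distortion along such chains almost surely along stationary trajectories.
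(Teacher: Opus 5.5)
Your Steps 1 and 2 are fine, but Step 3 has a genuine gap, and that step is the heart of the lemma. You apply Lemma~\ref{lem:hitting-estimate} to the whole geometric neighbourhood $W$ of $\Xi^-$. This requires a uniform bound $\inf_{w\in W}\nu_w(B(\Xi^-,2\delta))\ge c>0$ over infinitely many vertices whose circles accumulate at boundary points near $\Xi^-$. Full support and non-atomicity only give positivity vertex by vertex, and there is no compactness in the vertex set to upgrade this to a uniform bound. The bound you need is essentially a uniform boundary-regularity statement: exit measures from vertices whose circles are near a boundary point $\xi$ must concentrate near $\xi$. That is precisely the kind of harmonic-measure control that is unavailable with unbounded degrees. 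Your fallback does not close the gap either. The refined ring lemma controls ratios of neighbouring radii, not harmonic measure, and no Harnack inequality is available in this setting.

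The missing idea is that you never need the lower bound on a geometric neighbourhood, only on the remote past trajectory itself, and there it comes for free. Choose an open arc $I$ with $\Xi^-\in I$ and $\Xi^+\notin\overline I$. Take $I$ from a countable family of arcs with rational endpoints, so that L\'evy's zero--one law holds simultaneously for all of them, since $I$ depends on the walk. Set $h_I(v)=\mathbb P_v^G(\Xi\in I)$. Given $(G,\rho)$, the backward walk is itself a simple random walk from $\rho$. L\'evy's law along it gives $h_I(X_{-m})\to\mathbf 1_{\{\Xi^-\in I\}}=1$, so $h_I(X_{-m})\ge 1/2$ for all $m\ge m_0$. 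Along the forward walk, $h_I(X_n)\to 0$. Lemma~\ref{lem:hitting-estimate} with $W=\{X_{-m}:m\ge m_0\}$ then gives $\mathbb P_{X_n}^G(T_W<\infty)\le 2h_I(X_n)\to 0$. The finitely many points $X_{-1},\dots,X_{-m_0+1}$ are handled by transience, exactly as in your Step 2. This is the paper's argument, and it makes the geometric localization in your Step 1 unnecessary.
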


\begin{proof}
Choose an open arc \(I\subseteq\partial\D\) such that
\[
    \Xi^-\in I,
    \qquad
    \Xi^+\notin\overline I.
\]
The function
\[
    h_I(v):=\mathbb P_v^G(\Xi\in I)
\]
is bounded and harmonic. Lévy's zero--one law gives
\[
    h_I(X_n)\longrightarrow0,
    \qquad
    h_I(X_{-n})\longrightarrow1.
\]
After deleting finitely many points from the past, we may assume
\[
    \inf_{m\geq m_0}h_I(X_{-m})>0.
\]
Applying Lemma~\ref{lem:hitting-estimate} to the remote past gives
\eqref{eq:remote-past}; the finitely many deleted points are handled by 
transience.
\end{proof}

\begin{lemma}
\label{lem:forward-hitting}
Let \((v_j)_{j\geq0}\) be a path in \(G\) satisfying
\[
    z(v_j)\longrightarrow\Xi^+.
\]
Then, for every \(m\geq0\), almost surely there exist infinitely many \(n\)
such that
\begin{equation}\label{eq:forward-hitting}
    \mathbb P_{X_n}^G
    \bigl(
        \text{\rm hit }\{v_m,v_{m+1},\ldots\}
    \bigr)
    \geq\frac14.
\end{equation}
\end{lemma}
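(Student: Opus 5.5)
Since $z(v_j)\to\Xi^+$ and $z(X_n)\to\Xi^+$, I would compare hitting probabilities for the tail path $\gamma_m=\{v_m,v_{m+1},\ldots\}$ with harmonic measure near $\Xi^+$, following the circle-packing argument of \cite{angel2018hyperbolic} and its ideal-polyhedral adaptation in \cite{IIP2026}. The input is a planar ``crossing'' estimate. A path ending at a boundary point is, from the viewpoint of walkers started very close to that point, hit with probability bounded below. This holds because the walk's exit point $\Xi$ is non-atomic with full support (Lemmas~\ref{lem:exit-nonatomic} and \ref{lem:exit-full-support}), so the walk cannot slip past $\gamma_m$ while heading toward a neighbourhood of $\Xi^+$ that $\gamma_m$ separates.

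First, I would fix $m$ and small arcs. Let $J_\delta$ be the open arc of length $\delta$ centred at $\Xi^+$. Since $\gamma_m$ is a connected path whose circles accumulate only at $\Xi^+$, the union of its circles together with $\Xi^+$ separates $\overline{\D}$ near $\Xi^+$. Any walk whose limit lies on one side of $\Xi^+$ within $J_\delta$, and which starts from a circle lying in the other component cut off by $\gamma_m$, must hit $\gamma_m$. Second, I would use the random walk itself. Given the whole trajectory, with positive conditional probability (by non-atomicity) the walk started at $X_n$ exits on either half of a small arc around $\Xi^+$. I would make this precise with the harmonic functions $h_I(v)=\mathbb P^G_v(\Xi\in I)$ for $I$ the two half-arcs $I^\pm$ of $J_\delta\setminus\{\Xi^+\}$. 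By non-atomicity, $h_{I^+}+h_{I^-}=h_{J_\delta}$, and by Lévy's zero--one law, $h_{J_\delta}(X_n)\to1$. The claim to prove is that for infinitely many $n$ both $h_{I^+}(X_n)$ and $h_{I^-}(X_n)$ are at least $1/4$, or at least one of them is at least $1/4$ on the side separated from $X_n$ by $\gamma_m$. In either case the walk from $X_n$ must cross $\gamma_m$ with probability at least $1/4$, which gives \eqref{eq:forward-hitting}.

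The main obstacle is the second step: showing that the walk started at $X_n$ does not concentrate its exit measure on a single side of $\Xi^+$ for all large $n$. I would use stationarity of the reversible bi-infinite walk (Lemma~\ref{lem:degree-bias-reversibility}) together with a rotational-symmetry argument in the normalized RCP centred at $z_h(X_n)$ (Lemma~\ref{lem:compatible-rcp-metrics}). In that normalization the future limit point $\Xi^+$ is a random boundary point. The probability that the exit measure from the root gives mass at least $1/4$ to each side of $\Xi^+$ within a fixed hyperbolic-visual window is a stationary quantity, and it is positive by full support and non-atomicity. The ergodic theorem then yields infinitely many such $n$. Conversion back to Euclidean arcs shrinking at $\Xi^+$ uses Lemma~\ref{lem:center-convergence} and the exponential decay of $r(X_n)$ from Lemma~\ref{lem:exponential-radius-decay}, which ensures the hyperbolic window around $z_h(X_n)$ projects to arcs inside $J_\delta$. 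Finally, I would need to check that $\gamma_m$ separates $X_n$ from one of the two half-windows. If it fails to do so for both halves, then $\gamma_m$ would approach $\Xi^+$ entirely inside a cone avoiding the window, which is incompatible with $z(v_j)\to\Xi^+$ along a connected chain of intersecting circles once the window is large in the visual metric from $X_n$. I expect making this topological separation rigorous to be the most delicate point.
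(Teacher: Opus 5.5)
\textbf{There is a genuine gap.} It sits exactly where you expect the difficulty: the separation step is not merely delicate but false as formulated.

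The tail $\gamma_m=\{v_m,v_{m+1},\ldots\}$ is a curve running from the interior point $z(v_m)$ to the single boundary point $\Xi^+$. Such a curve does not separate $\mathbb{D}$. Apart from pockets enclosed by its own circles, which stay away from $\partial\mathbb{D}$, its complement is a single region adjacent to both sides of $\Xi^+$. So there is no ``other component cut off by $\gamma_m$''. A walk started at $X_n$ can travel around the inner endpoint $v_m$ and converge to a point on either side of $\Xi^+$ without ever visiting $\gamma_m$.

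The cone heuristic cannot repair this. Whether $X_n$ is separated from a half-window does not depend on how $\gamma_m$ approaches $\Xi^+$; it depends on what closes the curve off at its other end. Localizing does not help as you set it up either. Your half-windows have fixed visual size from $X_n$, so they shrink together with the distance from $X_n$ to $\partial\mathbb{D}$. Nothing in the proposal shows that the walk from $X_n$ is unlikely to leave the corresponding region and pass around $v_m$.

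\textbf{The missing idea} is to complete $\gamma_m$ to a genuine wall. You need to add a piece that runs to a \emph{different} boundary point and whose hitting probability from $X_n$ tends to $0$. The paper takes this piece from the reversible bi-infinite walk:
\begin{itemize}
\item After prepending a finite path so that $v_0=X_0$, the trace $(\ldots,X_{-2},X_{-1},X_0=v_0,v_1,v_2,\ldots)$ has connected closure joining $\Xi^-$ to $\Xi^+$, and $\Xi^-\neq\Xi^+$ by non-atomicity.
\item This set does separate $X_n$ from one of the two \emph{global} arcs $L,R$ of $\partial\mathbb{D}\setminus\{\Xi^-,\Xi^+\}$. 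Hence the walk from $X_n$ hits $S_m$ with probability at least $\min\{p_L^n,p_R^n\}$.
\item The balanced split $\min\{p_L^n,p_R^n\}>1/3$ for infinitely many $n$ is where stationarity and ergodicity enter, as in \cite[Equation 5.6]{angel2016unimodular}. Your stationarity idea belongs here, but with the arcs cut out by $\Xi^-$ and $\Xi^+$ rather than half-windows around $\Xi^+$. Given the walk up to time $n$, $\Xi^+$ has law $\nu_{X_n}$, so the $\nu_{X_n}$-mass of the arc from $\Xi^-$ to $\Xi^+$ is uniform on $[0,1]$. Moreover $\Xi^\pm$ do not change under time shifts.
\item The auxiliary parts of the wall are then discarded: the remote past by Lemma~\ref{lem:remote-past}, and the finite segment $\{v_0,\ldots,v_{m-1}\}$ by transience. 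This leaves $1/3-1/12=1/4$.
\end{itemize}
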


\begin{proof}
By prepending a finite path if necessary, we may assume that $v_0=X_0$. Let
\(L\) and \(R\) be the two open arcs of
\[
    \partial\mathbb D\setminus\{\Xi^-,\Xi^+\}.
\]
For every \(n\geq0\), let
\[
    p_L^n
    :=
    \mathbb P_{X_n}^G(\Xi\in L),
    \qquad
    p_R^n
    :=
    \mathbb P_{X_n}^G(\Xi\in R),
\]
where \(\Xi\) denotes the boundary limit of an independent random walk
started from \(X_n\). Since the exit measure is non-atomic,
\[
    p_L^n+p_R^n=1.
\]

As in the proof of the Poisson-boundary theorem of
Angel-Hutchcroft-Nachmias-Ray \cite[Equation 5.6]{angel2016unimodular}, stationarity and ergodicity of the
bi-infinite walk, together with the fact that the exit measure is
non-atomic and has full support, imply that almost surely
\[
    \min\{p_L^n,p_R^n\}>\frac13
\]
for infinitely many \(n\).

Fix \(m\geq0\), and consider one of these times \(n\). Set
\[
    S_m
    :=
    \{\ldots,X_{-2},X_{-1}\}
    \cup
    \{v_0,v_1,\ldots,v_{m-1}\}
    \cup
    \{v_m,v_{m+1},\ldots\},
\]

Consider the trace
\[
    (\ldots,X_{-2},X_{-1},X_0=v_0,v_1,v_2,\ldots).
\]
Its closure in \(\overline{\mathbb D}\) is connected and joins the two distinct boundary
points \(\Xi^-\) and \(\Xi^+\). Consequently, it separates \(X_n\) from
at least one of the boundary arcs \(L\) and \(R\). Any independent random
walk started from \(X_n\) whose boundary limit lies in that separated arc
must therefore hit \(S_m\). Hence
\[
    \mathbb P_{X_n}^G
    \bigl(
        \text{\rm hit }S_m
    \bigr)
    \geq
    \min\{p_L^n,p_R^n\}
    >
    \frac13.
\]

By Lemma~\ref{lem:remote-past},
\[
    \mathbb P_{X_n}^G
    \bigl(
        \text{\rm hit }\{\ldots,X_{-2},X_{-1}\}
    \bigr)
    \longrightarrow0
\]
almost surely as \(n\to\infty\). Moreover, since
\(\{v_0,\ldots,v_{m-1}\}\) is finite and \(G\) is transient,
\[
    \mathbb P_{X_n}^G
    \bigl(
        \text{\rm hit }\{v_0,\ldots,v_{m-1}\}
    \bigr)
    \longrightarrow0.
\]
It follows that, for all sufficiently large \(n\) among the infinitely
many times satisfying
\[
    \min\{p_L^n,p_R^n\}>\frac13,
\]
the sum of the preceding two probabilities is smaller than
\(\frac1{12}\). Therefore,
\[
\begin{aligned}
    \mathbb P_{X_n}^G
    \bigl(
        \text{\rm hit }\{v_m,v_{m+1},\ldots\}
    \bigr)
    &\geq
    \mathbb P_{X_n}^G
    \bigl(
        \text{\rm hit }S_m
    \bigr) \\
    &\quad-
    \mathbb P_{X_n}^G
    \bigl(
        \text{\rm hit }\{\ldots,X_{-2},X_{-1}\}
    \bigr) \\
    &\quad-
    \mathbb P_{X_n}^G
    \bigl(
        \text{\rm hit }\{v_0,\ldots,v_{m-1}\}
    \bigr) \\
    &>
    \frac13-\frac1{12}
    =
    \frac14.
\end{aligned}
\]
Thus \eqref{eq:forward-hitting} holds for infinitely many \(n\), almost
surely.
\end{proof}

\begin{theorem}[Poisson boundary]
\label{thm:poisson-boundary-identification}
Conditionally on \((G,\rho,\Theta)\), almost surely the unit circle
\(\partial\D\), equipped with the exit measures
\((\nu_v)_{v\in V(G)}\), realizes the Poisson boundary of \(G\).

Equivalently, for every bounded harmonic function \(h\) on \(G\), there
exists a bounded Borel function \(g:\partial\D\to\mathbb R\) such that
\begin{equation}\label{eq:poisson-representation-boundary}
    h(v)
    =
    \mathbb E_v^G[g(\Xi)]
    =
    \int_{\partial\D}g(\xi)\,d\nu_v(\xi).
\end{equation}
\end{theorem}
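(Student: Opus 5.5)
The plan follows the Angel--Hutchcroft--Nachmias--Ray strategy \cite{angel2016unimodular}. I would show that for every bounded harmonic function \(h\), the limit \(h(X_n)\), which exists by the martingale convergence theorem, is almost surely a measurable function of \(\Xi\). Equivalently, the invariant \(\sigma\)-field of the walk coincides mod null sets with \(\sigma(\Xi)\). Once this holds, set \(g(\xi)\) to be a Borel version of this limit as a function of \(\Xi=\xi\). Optional stopping and bounded convergence then give \(h(v)=\mathbb E_v^G[\lim h(X_n)]=\mathbb E_v^G[g(\Xi)]\), which is \eqref{eq:poisson-representation-boundary}.

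By standard reductions it suffices to treat \(h=\mathbb P^G_\cdot(A)\) for an invariant event \(A\) of the path. We then have to show that \(\mathbf 1_A\) is \(\sigma(\Xi)\)-measurable. Using the reversible bi-infinite walk of Lemma~\ref{lem:degree-bias-reversibility} and mutual absolute continuity with \(\mathbb P\), I would argue by contradiction. Suppose there are two independent walks (or the forward walk and a second walk) with the same limit point \(\Xi^+\) but with \(h\)-limits \(0\) and \(1\). Concretely, fix a realization and let \((v_j)\) be the trace of a second walk from \(X_0\) that converges to \(\Xi^+\) along a sequence on which \(h(v_j)\to 1\). Meanwhile \(h(X_n)\to 0\) along the forward path. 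The key input is Lemma~\ref{lem:forward-hitting}: infinitely often, the walk from \(X_n\) hits the tail \(\{v_m,v_{m+1},\dots\}\) with probability at least \(\frac14\). Lemma~\ref{lem:hitting-estimate} applied with \(W=\{v_m,\dots\}\) then gives \(h(X_n)\ge \frac14\inf_{j\ge m}h(v_j)\). Letting \(m\to\infty\) and then taking such \(n\) large yields \(\liminf h(X_n)\ge \frac14\cdot(1-o(1))\), contradicting \(h(X_n)\to0\).

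Making this rigorous needs a conditioning step. Define \(g_\pm(\xi)\) as essential sup/inf versions of \(\mathbb E[\lim h(X_n)\mid \Xi=\xi]\), and show that for \(\nu\)-a.e.\ \(\xi\) the conditional law of \(\lim h(X_n)\) given \(\Xi=\xi\) is a point mass. If it were not, one could find, with positive probability, both the observed walk and an independently sampled walk from \(X_0\) that are conditioned, via the Doob transform, to the same boundary point but have different limits. The non-atomicity in Lemma~\ref{lem:exit-nonatomic} and full support in Lemma~\ref{lem:exit-full-support} are exactly what make Lemma~\ref{lem:forward-hitting} available. Lemma~\ref{lem:remote-past} removes the interference of the past trajectory. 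Applying the argument to both \(h\) and \(1-h\) forces the limits to agree.

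I expect the main obstacle to be the step of producing the comparison path \((v_j)\) converging to the \emph{same} point \(\Xi^+\) with the opposite harmonic limit. Lemma~\ref{lem:forward-hitting} is stated for a deterministic path, whereas here it must be used with a random path defined through the same boundary point, so measurability and independence must be handled carefully. I would first try the AHNR device directly. Take a second independent walk \(X'\) from \(X_0\), condition on \(\Xi'\) lying in a small arc \(J\ni\Xi^+\), and apply Lemma~\ref{lem:forward-hitting} to its trace. Then shrink \(J\) and use the harmonic-measure comparison \(h(X_n)\gtrsim\frac14\,\mathbb E[\lim h(X')\mid \Xi'\in J]\). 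In the limit this shows that the conditional expectation of \(\lim h\) given \(\Xi\) is a.s.\ attained, i.e.\ that \(\lim h(X_n)=g(\Xi)\) almost surely.
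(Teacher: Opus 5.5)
There is a genuine gap, and it is the step you flagged yourself. Your second paragraph already contains the paper's core argument. If a path $(v_j)$ with $z(v_j)\to\Xi^+$ satisfies $\inf_{j\ge m}h(v_j)\ge\frac12$, then Lemma~\ref{lem:forward-hitting} and Lemma~\ref{lem:hitting-estimate} give $h(X_n)\ge\frac18$ for infinitely many $n$.

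The trouble is that you insist on taking $(v_j)$ to be the trace of a second random walk. The exit measure is non-atomic, so two independent walks share a limit point only on a null event. Ruling out ``two walks with the same limit but harmonic limits $0$ and $1$'' therefore says nothing about whether $\mathbf 1_A$ is $\sigma(\Xi)$-measurable.

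Your proposed repair does not work either. A walk $X'$ conditioned on $\Xi'\in J$ converges almost surely to some $\xi'\neq\Xi^+$, so Lemma~\ref{lem:forward-hitting} does not apply to its trace. Worse, for any fixed path tending to $\xi'\neq\Xi^+$, the forward walk visits its tail only finitely often, so $\mathbb P^G_{X_n}(\text{hit the tail})\to0$. The bound $h(X_n)\gtrsim\frac14\,\mathbb E[\lim h(X')\mid\Xi'\in J]$ therefore cannot be obtained this way for any fixed $J$ once $n$ is large. The limits in $n$ and $J$ cannot be exchanged as you suggest. The Doob-transform disintegration is never actually carried out, and it is not needed.

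The missing idea is to quantify over \emph{all} paths rather than random ones. For fixed $G$, let $B_A$ be the set of $\xi\in\partial\mathbb D$ that are limits of some path along which $h_A=\mathbb P^G_\cdot(A)$ tends to $1$.

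On $A$, the walk itself witnesses $\Xi\in B_A$ by L\'evy's zero--one law. For the converse, note that the almost-sure events in the proof of Lemma~\ref{lem:forward-hitting} do not depend on the chosen path. These events are:
\begin{itemize}
\item the infinitely many times at which $\min\{p_L^n,p_R^n\}>\frac13$;
\item the decay in Lemma~\ref{lem:remote-past};
\item the decay, by transience, of hitting probabilities of finite sets, of which there are countably many.
\end{itemize}
The separation step is deterministic planar topology. Hence the lemma holds simultaneously for every path converging to $\Xi^+$. Applying it to any witness shows that $\Xi\in B_A$ forces $A$.

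Thus $A=\{\Xi\in B_A\}$ modulo null sets, which is exactly how the paper proceeds. The only remaining technical point is measurability of $B_A$, which the paper imports from \cite{angel2016unimodular}. Had you allowed $(v_j)$ to be an arbitrary path, your second paragraph would already have been the whole proof.
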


\begin{proof}
Let \(\mathcal A\) be an invariant event of the random walk and define
\[
    h_{\mathcal A}(v)
    :=
    \mathbb P_v^G(\mathcal A).
\]
Then \(h_{\mathcal A}\) is bounded and harmonic, and Lévy's zero--one law
gives
\begin{equation}\label{eq:levy-invariant-event}
    h_{\mathcal A}(X_n)
    \longrightarrow
    \mathbf 1_{\mathcal A}.
\end{equation}

Let \(B_{\mathcal A}\subseteq\partial\D\) be the set of points \(\xi\)
for which there exists a path \((v_j)\) satisfying
\[
    z(v_j)\longrightarrow\xi,
    \qquad
    h_{\mathcal A}(v_j)\longrightarrow1.
\]
The set \(B_{\mathcal A}\) is Borel, see \cite{angel2016unimodular}.

On \(\mathcal A\), the path \((X_n)\) itself witnesses
\(\Xi\in B_{\mathcal A}\), by \eqref{eq:levy-invariant-event}. Conversely,
suppose that \(\Xi\in B_{\mathcal A}\), and choose a witnessing path
\((v_j)\). For sufficiently large \(m\),
\[
    \inf_{j\geq m}h_{\mathcal A}(v_j)\geq\frac12.
\]
By Lemma~\ref{lem:forward-hitting}, for infinitely many \(n\),
\[
    \mathbb P_{X_n}^G
    \bigl(
        \text{\rm hit }\{v_m,v_{m+1},\ldots\}
    \bigr)
    \geq\frac14.
\]
Lemma~\ref{lem:hitting-estimate} therefore gives
\[
    h_{\mathcal A}(X_n)\geq\frac18
\]
infinitely often. In view of \eqref{eq:levy-invariant-event}, this implies
that \(\mathcal A\) occurs. Thus
\[
    \mathcal A
    =
    \{\Xi\in B_{\mathcal A}\}
\]
modulo null sets.

Hence the invariant sigma-field is generated by \(\Xi\), proving the
Poisson-boundary identification and the representation
\eqref{eq:poisson-representation-boundary}.
\end{proof}

Combining the preceding lemmas gives the main boundary statement.

\begin{corollary}[Boundary identification]
\label{thm:0.2b}
Under the assumptions of this section, conditionally on
\((G,\rho,\Theta)\), almost surely:
\begin{enumerate}[(1)]
    \item both \(z(X_n)\) and \(z_h(X_n)\) converge to a random limit
    \(\Xi\in\partial\D\);
    \item the law of \(\Xi\) is non-atomic and has full support on
    \(\partial\D\);
    \item the unit circle \(\partial\D\), equipped with the exit measures,
    realizes the Poisson boundary of \(G\).
\end{enumerate}
Through the THP/ADT duality, the same conclusions hold for the face random
walk on the corresponding trivalent hyperbolic polyhedron.
\end{corollary}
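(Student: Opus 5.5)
The statement is Corollary~\ref{thm:0.2b}, which assembles results already proved in this section. I will derive its three items from those lemmas in order, and then transfer them to the polyhedron.

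\emph{Item (1).} This is Lemma~\ref{lem:center-convergence}. It relies on the summability $\sum_n r(X_n)<\infty$ from Lemma~\ref{lem:exponential-radius-decay}, which in turn needs two inputs.
\begin{enumerate}[(a)]
\item The refined Ring Lemma~\ref{thm:re-ring-lemma}, used through Lemma~\ref{lem:ring-two-sided-boundary}.
\item The third-moment integrability of Lemma~\ref{lem:flower-integrability}, which makes the local distortion integrable under the reversible law of Lemma~\ref{lem:degree-bias-reversibility}.
\end{enumerate}
I also need transience, given by Theorem~\ref{thm:dichotomy}, to place the limit on $\partial\D$ rather than inside $\D$.

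The results are first obtained under the degree-biased law $\widehat{\mathbb P}$. They transfer to $\mathbb P$ because the two laws are mutually absolutely continuous (the density $\deg_G(\rho)/\E[\deg_G(\rho)]$ is positive). By ergodic decomposition, I may assume ergodicity wherever Lemma~\ref{lem:forward-hitting} requires it.

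\emph{Item (2).} Non-atomicity is Lemma~\ref{lem:exit-nonatomic}. That lemma uses the atom dichotomy of Lemma~\ref{lem:stationary-atom-dichotomy} together with the Gauss--Bonnet contradiction from Theorem~\ref{thm:chr2deg}. Full support is Lemma~\ref{lem:exit-full-support}.

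These lemmas are stated for the family $\nu_v$ with $v$ ranging over $V(G)$. I still need to pass to the root. The law of $\Xi$ under $\mathbb P^G_\rho$ is $\nu_\rho$, so (2) is simply the case $v=\rho$.

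\emph{Item (3).} This is Theorem~\ref{thm:poisson-boundary-identification}. Its proof combines Lemmas~\ref{lem:remote-past} and~\ref{lem:forward-hitting} with the hitting estimate, Lemma~\ref{lem:hitting-estimate}, to show that every invariant event has the form $\{\Xi\in B_{\mathcal A}\}$ up to null sets.

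\emph{Transfer to the polyhedron.} I use the THP/ADT duality. Faces of $P$ correspond to vertices of $G$, and adjacent faces correspond to adjacent vertices. Hence the face random walk on $P$ is exactly the simple random walk on $G$, and all three conclusions carry over verbatim.

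The hypotheses of the corollary match the standing assumptions of the section exactly, so no further checking is needed there. The only real obstacle is hidden in the cited lemmas, namely the exponential radius decay. Everything else is bookkeeping.
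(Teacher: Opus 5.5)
Your proposal is correct and matches the paper's argument. The paper proves this corollary by simply combining the preceding results: Lemma~\ref{lem:center-convergence} gives item (1), Lemmas~\ref{lem:exit-nonatomic} and~\ref{lem:exit-full-support} (taken at $v=\rho$) give item (2), Theorem~\ref{thm:poisson-boundary-identification} gives item (3), and the face random walk is identified with simple random walk on the dual ADT; your handling of the passage between $\widehat{\mathbb P}$ and $\mathbb P$ and of ergodic decomposition follows the paper's own conventions for this section.
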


\subsection{Positive hyperbolic speed}

We finally prove the existence and positivity of the linear escape rate.
Work under the reversible law and extend the walk to a bi-infinite walk.
Since the exit measure is non-atomic, the forward and backward limits
\(\Xi^+\) and \(\Xi^-\) are distinct almost surely. Choose a Möbius
transformation
\[
    \Psi:\D\longrightarrow\mathbb H
\]
such that
\[
    \Psi(\Xi^+)=0,
    \qquad
    \Psi(\Xi^-)=\infty.
\]
Let \(\widehat r(v)\) be the Euclidean radius of the image circle
\(\Psi(C(v))\). The remaining freedom is a positive scaling, so the ratios
\[
    R_n
    :=
    \frac{\widehat r(X_n)}{\widehat r(X_{n-1})}
\]
are well-defined.

\begin{lemma}
\label{lem:stationary-radius-cocycle}
The sequence \((\log R_n)_{n\geq1}\) is stationary and integrable.
Consequently, there exists a finite, shift-invariant random variable
\(\lambda\) such that
\begin{equation}\label{eq:radius-drift-hat}
    \lim_{n\to\infty}
    \frac{-\log\widehat r(X_n)}{n}
    =
    \lambda
\end{equation}
almost surely. If the marked random rooted ADT
\((G,\rho,\Theta)\) is ergodic, then \(\lambda\) is almost surely
constant.
\end{lemma}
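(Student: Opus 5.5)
The plan is to follow the standard cocycle argument of Angel--Hutchcroft--Nachmias--Ray, adapted to the normalization $\Psi(\Xi^+)=0$, $\Psi(\Xi^-)=\infty$.

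\textbf{Stationarity.} The ratio $R_n$ depends only on three pieces of data: the rooted marked graph seen from $X_{n-1}$, the step to $X_n$, and the two boundary points $\Xi^\pm$. The pair $(\Xi^+,\Xi^-)$ is a measurable function of the bi-infinite trajectory, via Lemma~\ref{lem:center-convergence} applied forward and backward. It is also equivariant under rerooting. The key point is that a change of root changes the RCP in $\D$ only by a M\"obius automorphism (Theorem~\ref{thm-intro-rigidity-cp}). After composing with $\Psi$, that automorphism becomes a M\"obius map of $\mathbb H$ fixing $0$ and $\infty$, namely a positive scaling $z\mapsto az$. Such a scaling leaves every ratio $\widehat r(v)/\widehat r(u)$ unchanged. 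Hence $\log R_n=F(\theta^{n-1}\omega)$ for a fixed measurable $F$, where $\theta$ is the time shift on the space of bi-infinite walks on rooted marked graphs. By Lemma~\ref{lem:degree-bias-reversibility}, $\theta$ preserves $\widehat{\mathbb P}$, so $(\log R_n)_{n\ge1}$ is stationary.

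\textbf{Integrability.} This is the step I expect to be the main obstacle. The difficulty is that $\widehat r$ is not the Euclidean radius $r$ in $\D$. However, the two pairs of adjacent circles differ only by a M\"obius map, and adjacent circles have controlled relative size. Lemma~\ref{lem:ring-two-sided-boundary} bounds the ratio of adjacent radii, and that bound holds in any planar normalization, because the refined Ring Lemma is stated for an arbitrary planar embedding. The plan is to apply Lemma~\ref{thm:re-ring-lemma} directly to the RCP $\Psi(\mathcal P)$ in $\mathbb H\subset\C$, which is still an embedding realizing $(G,\Theta)$. This gives
\[
|\log R_n|\le C\bigl(S(X_{n-1})+S(X_n)\bigr).
\]
By stationarity and Lemma~\ref{lem:flower-integrability}, under the degree-biased law
\[
\widehat\E[S(\rho)]=\frac{\E[\deg(\rho)S(\rho)]}{\E[\deg(\rho)]}<\infty,
\]
so $\log R_1$ is integrable. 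One caveat is that circles in $\Psi(\mathcal P)$ may fail to be bounded disks near $\infty$, since $\Psi$ can send a circle through $\infty$ to a line. This happens only for circles meeting $\Xi^-$, which lies on $\partial\D$, and no circle of the locally finite RCP touches $\partial\D$. So all image circles are genuine and the Ring Lemma applies.

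\textbf{Limit and ergodicity.} Since $\log\widehat r(X_n)=\log\widehat r(X_0)+\sum_{k=1}^n\log R_k$, Birkhoff's ergodic theorem gives $-\log\widehat r(X_n)/n\to\lambda$ almost surely. Here $\lambda:=-\widehat\E[\log R_1\mid\mathcal I]$ is finite, where $\mathcal I$ denotes the shift-invariant $\sigma$-field. The statement then transfers from $\widehat{\mathbb P}$ to $\mathbb P$ by mutual absolute continuity. When $(G,\rho,\Theta)$ is ergodic, the bi-infinite stationary walk under $\widehat{\mathbb P}$ is ergodic for the shift; this is the standard fact for reversible environments seen from the walker, as in Aldous--Lyons. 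Therefore $\mathcal I$ is trivial and $\lambda$ is almost surely constant.
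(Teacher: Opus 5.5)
Your proposal is correct and follows the paper's proof in all its main steps:
\begin{itemize}
\item stationarity comes from reversibility, together with the fact that rerooting changes $\Psi$ only by a positive scaling, which fixes the ratios $R_n$;
\item integrability comes from the refined Ring Lemma combined with the degree-biased flower-degree moment bound;
\item Birkhoff's theorem is applied with $\lambda=-\widehat{\E}[\log R_1\mid\mathcal I]$, and $\mathcal I$ is trivial in the ergodic case.
\end{itemize}
One point in your favour: you apply the refined Ring Lemma directly to the image pattern $\Psi(\mathcal P)$ in $\mathbb H$, after checking that all image circles are bounded. This makes precise the paper's terse integrability step, which cites an estimate stated for the radii $r$ in $\D$ rather than for $\widehat r$.
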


\begin{proof}
Stationarity follows from reversibility of the bi-infinite walk. Indeed,
shifting the walk does not change the boundary points
\(\Xi^+\) and \(\Xi^-\). The corresponding normalization
\(\Psi:\D\to\mathbb H\) may change only by a positive Euclidean
scaling, which leaves the ratios
\[
    R_n
    =
    \frac{\widehat r(X_n)}{\widehat r(X_{n-1})}
\]
unchanged.

The refined Ring Lemma and
Lemma~\ref{lem:integrable-local-distortion} give
\[
    \widehat{\E}\bigl[|\log R_1|\bigr]<\infty.
\]
Let \(\mathcal I\) denote the invariant sigma-field of the stationary
bi-infinite walk. By Birkhoff's ergodic theorem,
\[
\begin{aligned}
    \frac1n
    \log\frac{\widehat r(X_n)}{\widehat r(X_0)}
    &=
    \frac1n\sum_{j=1}^n\log R_j\\
    &\longrightarrow
    \widehat{\E}\bigl[\log R_1\mid\mathcal I\bigr]
\end{aligned}
\]
almost surely. Define
\[
    \lambda
    :=
    -\widehat{\E}\bigl[\log R_1\mid\mathcal I\bigr].
\]
Since
\[
    \frac{\log\widehat r(X_0)}{n}\longrightarrow0,
\]
we obtain
\[
    \frac{-\log\widehat r(X_n)}{n}
    \longrightarrow
    \lambda.
\]
The random variable \(\lambda\) is finite and shift-invariant. If
\((G,\rho,\Theta)\) is ergodic, then \(\mathcal I\) is trivial, and
hence \(\lambda\) is almost surely constant.
\end{proof}

\begin{lemma}
\label{lem:mobius-radius-distortion}
Almost surely,
\[
    \lim_{n\to\infty}
    \frac{
        \log\widehat r(X_n)-\log r(X_n)
    }{n}
    =
    0.
\]
Consequently,
\begin{equation}\label{eq:radius-drift-original}
    \lim_{n\to\infty}
    \frac{-\log r(X_n)}{n}
    =
    \lambda.
\end{equation}
\end{lemma}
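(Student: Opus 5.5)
The plan is to compare the two normalizations through the M\"obius map relating them. Let $\Phi:=\Psi^{-1}$ restricted to $\mathbb H\to\D$. For a circle $C\subset\mathbb H$ of radius $\widehat r$ centered at $\widehat z$, the image circle $\Phi(C)$ has radius approximately $|\Phi'(\widehat z)|\,\widehat r$. More precisely, since $\Phi$ is M\"obius, the distortion across a disk is controlled by $\widehat r/\operatorname{dist}(\widehat z,\Phi^{-1}(\infty))$. The pole $\Phi^{-1}(\infty)=\Psi(\infty')$ lies in the lower half-plane at a fixed positive distance $\delta>0$ from $\overline{\mathbb H}$, where $\infty'$ is the point reflected across $\partial\D$. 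So for every circle in $\mathbb H$ whose Euclidean diameter is small compared with its distance to the pole, we get
\[
\bigl|\log r(v)-\log\widehat r(v)-\log|\Phi'(\widehat z(v))|\bigr|\le C_\Phi .
\]
Here $C_\Phi$ is a constant depending only on the fixed realization.

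The steps are as follows.

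\textbf{Step 1.} Show that $\widehat z(X_n)\to\Psi(\Xi^+)=0$. This follows from Lemma~\ref{lem:center-convergence} and continuity of $\Psi$ on $\overline\D$. Since $\Xi^+\ne\Xi^-$, the point $\Psi(\Xi^+)$ is finite, so $\Psi$ is a bounded-distortion diffeomorphism on a neighbourhood $U$ of $\Xi^+$ in $\overline\D$. Hence $|\Phi'|$ is bounded above and below by positive constants on $\Psi(U)$.

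\textbf{Step 2.} Show that for $n$ large, $C(X_n)\subset U$. This uses $r(X_n)\to0$ from Lemma~\ref{lem:exponential-radius-decay} together with $z(X_n)\to\Xi^+$. For circles contained in $U$, the Koebe-type distortion bound for the M\"obius map $\Psi$ gives $\bigl|\log\widehat r(X_n)-\log r(X_n)\bigr|\le K$ for a random but finite $K$ (depending on the realization and on $\Xi^\pm$, but not on $n$).

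\textbf{Step 3.} Divide by $n$ to obtain the first claim. Combining with Lemma~\ref{lem:stationary-radius-cocycle} then yields \eqref{eq:radius-drift-original}.

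The only delicate point is Step 2: we need the whole circle $C(X_n)$, not merely its center, to lie in the region where $\Psi$ has bounded distortion. This is handled by choosing $U$ as a Euclidean ball around $\Xi^+$ of radius half the distance from $\Xi^+$ to $\Psi^{-1}(\infty)=\Xi^-$ (in $\overline{\C}$, $\Psi$ is analytic away from $\Psi^{-1}(\infty)$). Then $z(X_n)\to\Xi^+$ and $r(X_n)\to0$ force $C(X_n)\subset U$ eventually.

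For the radius comparison itself I would use the explicit formula: a M\"obius map $\Psi$ sends a circle of center $z$ and radius $r$ not enclosing the pole $p$ to a circle of radius
\[
\frac{r\,|\Psi'(z)|\,|z-p|^2}{\bigl||z-p|^2-r^2\bigr|},
\]
up to the standard normalization. On $U$ we have $|z-p|\ge c>0$ and $r\to0$, so this ratio is $|\Psi'(z)|(1+o(1))$, and $\log|\Psi'|$ is bounded on $\overline U$. No integrability or ergodicity is needed here; the statement is pathwise given that $\Xi^+\neq\Xi^-$, which holds almost surely by non-atomicity (Lemma~\ref{lem:exit-nonatomic}).
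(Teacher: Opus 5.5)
Your proposal is correct and matches the paper's proof. The pole of $\Psi$ is $\Xi^-\neq\Xi^+$, so $|\Psi'|$ is bounded above and below near $\Xi^+$, and $C(X_n)$ eventually lies in that neighbourhood since $z(X_n)\to\Xi^+$ and $r(X_n)\to 0$; hence $\log\widehat r(X_n)-\log r(X_n)$ is eventually bounded (by your explicit M\"obius radius formula it even converges to $\log|\Psi'(\Xi^+)|$), and dividing by $n$ and applying Lemma~\ref{lem:stationary-radius-cocycle} finishes. Your opening detour through $\Phi=\Psi^{-1}$ and the point ``$\infty'$'' is unnecessary and somewhat garbled, since Step~2 with pole $\Xi^-$ is all you need.
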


\begin{proof}
Since \(z(X_n)\to\Xi^+\) and the pole of \(\Psi\) is
\(\Xi^-\neq\Xi^+\), the map \(\Psi\) is conformal on a neighborhood
of \(\Xi^+\), and its derivative is bounded above and bounded away
from zero there. Since
\[
    z(X_n)\longrightarrow\Xi^+
    \qquad\text{and}\qquad
    r(X_n)\longrightarrow0,
\]
the circle \(C(X_n)\) is contained in this neighborhood for all
sufficiently large \(n\). It follows that there exist random constants
\(0<c<C<\infty\) such that
\[
    c\,r(X_n)
    \leq
    \widehat r(X_n)
    \leq
    C\,r(X_n)
\]
for all sufficiently large \(n\). Hence
\[
    \left|
        \log\widehat r(X_n)-\log r(X_n)
    \right|
    \leq
    \max\{-\log c,\log C\},
\]
and division by \(n\) proves the first assertion. Combining this with
Lemma~\ref{lem:stationary-radius-cocycle} gives
\[
    \lim_{n\to\infty}
    \frac{-\log r(X_n)}{n}
    =
    \lambda.
\]
\end{proof}

\begin{lemma}
\label{lem:positive-radius-drift}
Almost surely,
\[
    \lambda>0.
\]
\end{lemma}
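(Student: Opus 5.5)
The plan is to combine the two facts already established: the original radii decay exponentially along the walk, and by Lemma~\ref{lem:mobius-radius-distortion} the normalized radii $\widehat r(X_n)$ have the same exponential rate as $r(X_n)$. Since $r(X_n)\le 1$ for circles in $\D$, the limit in \eqref{eq:radius-drift-original} satisfies $\lambda\ge 0$ trivially. The task is to exclude $\lambda=0$, and Lemma~\ref{lem:exponential-radius-decay} gives exactly this. It states that, conditionally on $(G,\rho,\Theta)$, almost surely $\limsup_{n\to\infty}\frac{\log r(X_n)}{n}<0$. Comparing with \eqref{eq:radius-drift-original}, whose limit exists almost surely, we get
\[
\lambda=\lim_{n\to\infty}\frac{-\log r(X_n)}{n}=-\limsup_{n\to\infty}\frac{\log r(X_n)}{n}>0
\]
almost surely.

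The point needing care is the passage between laws. Lemma~\ref{lem:exponential-radius-decay} is stated for the walk started at $\rho$ under $\mathbb P$ (or the reversible law $\widehat{\mathbb P}$). The speed $\lambda$, on the other hand, is defined through the bi-infinite stationary walk under $\widehat{\mathbb P}$. The forward half $(X_n)_{n\ge0}$ of that walk is simple random walk from $\rho$ under the degree-biased law, and $\widehat{\mathbb P}$ is mutually absolutely continuous with $\mathbb P$. So the almost-sure statement of Lemma~\ref{lem:exponential-radius-decay} transfers to the setting of Lemma~\ref{lem:stationary-radius-cocycle}. I would also check that the exponential decay proof itself holds on every ergodic component: the rate $a/\widehat{\mathbb P}(\rho\in\omega)$ from Lemma~\ref{lem:radius-decay-return-times} and \eqref{eq:return-time-density-boundary} is positive there because $\omega$ has positive intensity. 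This yields $\lambda>0$ almost surely, even when $\lambda$ is only shift-invariant rather than constant.

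The main obstacle is the $o(n)$ control of the error term $C\sum_{j=N_k}^{n}(S(X_j)+S(X_{j+1}))$ in the proof of Lemma~\ref{lem:exponential-radius-decay}. If that control feels thin, I would redo it directly here. The sum has length $n-N_k\le N_{k+1}-N_k$, and by Birkhoff applied to the stationary, integrable sequence $S(X_j)$ (integrable by Lemma~\ref{lem:flower-integrability} under $\widehat{\mathbb P}$, since $\widehat\E[S(\rho)]\propto\E[\deg S]<\infty$), we have $\frac1n\sum_{j\le n}S(X_j)$ converging. Together with $N_{k+1}/N_k\to1$, this gives $\frac1n\sum_{N_k\le j\le N_{k+1}}S(X_j)\to0$. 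Hence $-\log r(X_n)\ge ak-o(n)$ with $k\sim n\,\widehat{\mathbb P}(\rho\in\omega)$, and therefore $\lambda\ge a\,\widehat{\mathbb P}(\rho\in\omega\mid\mathcal I)>0$.

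As a backup, suppose the ergodic-component bookkeeping fails. Then I would argue by contradiction: on the event $\{\lambda=0\}$, the radii along the walk would decay subexponentially, which contradicts Lemma~\ref{lem:radius-decay-return-times} at the return times $N_k$ of positive density.
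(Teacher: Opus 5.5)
Your proposal is correct and is essentially the paper's own argument: the limit $\lambda=\lim_{n\to\infty}(-\log r(X_n))/n$ exists by Lemma~\ref{lem:mobius-radius-distortion}, and Lemma~\ref{lem:exponential-radius-decay} forces it to be strictly positive. Your extra bookkeeping on the change of law and on the $o(n)$ error term is sound and more detailed than the paper's. One small slip: the rate obtained from the return times is $a\,\widehat{\mathbb P}(\rho\in\omega)$, as you correctly write later, not $a/\widehat{\mathbb P}(\rho\in\omega)$.
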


\begin{proof}
By Lemma~\ref{lem:mobius-radius-distortion}, the limit
\[
    \lim_{n\to\infty}
    \frac{\log r(X_n)}{n}
    =
    -\lambda
\]
exists. On the other hand,
Lemma~\ref{lem:exponential-radius-decay} gives
\[
    \limsup_{n\to\infty}
    \frac{\log r(X_n)}{n}
    <0.
\]
Therefore,
\(
    \lambda>0.
\)
\end{proof}

\begin{lemma}
\label{lem:radius-distance-comparison}
Almost surely,
\begin{equation}\label{eq:radius-distance-sublinear}
    d_{\mathrm h}\bigl(z_h(\rho),z_h(X_n)\bigr)
    +
    \log r(X_n)
    =
    o(n).
\end{equation}
\end{lemma}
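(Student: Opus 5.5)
Set $x_n:=z_h(X_n)$ and $\rho_n:=r_h(X_n)$. The idea is to compare both terms of \eqref{eq:radius-distance-sublinear} with the Euclidean distance of the hyperbolic center to the boundary, $1-|x_n|$. Let $\eta_n:=1-|z_h(X_n)|$.

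\textbf{Step 1 (distance term).} The standard formula $d_{\mathrm h}(0,z)=\log\frac{1+|z|}{1-|z|}$, together with the triangle inequality against the fixed point $z_h(\rho)$, gives
\[
d_{\mathrm h}\bigl(z_h(\rho),z_h(X_n)\bigr)=-\log\eta_n+O(1),
\]
where the $O(1)$ depends only on the realization.

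\textbf{Step 2 (radius term).} For a circle in $\D$ with hyperbolic center $c$ and hyperbolic radius $s$, the Euclidean radius satisfies
\[
r\asymp (1-|c|)\,\tanh(s/2)\cdot\frac{1}{1-|c|^2\tanh^2(s/2)}\cdot(1+|c|)
\]
by the explicit Möbius formula. This yields
\[
\log r(X_n)=\log\eta_n+\phi\bigl(r_h(X_n)\bigr)+O(1),
\]
where $\phi(s)=\log\tanh(s/2)$ for small $s$ and $\phi(s)=O(1)+O(s)$ correction when $s$ is large. More precisely, $|\log r-\log\eta|\le C+|\log\tanh(s/2)|+ s$, and in fact
\[
\log r\le\log\eta+\log 2,\qquad \log r\ge\log\eta+\log\tanh(s/2)-C .
\]
Adding the two steps, it therefore suffices to prove $|\log\tanh(r_h(X_n)/2)|=o(n)$, i.e.\ that the hyperbolic radii along the walk do not become exponentially small.

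\textbf{Step 3 (stationarity).} The hyperbolic radius $r_h(v)$ is invariant under disk automorphisms, so $r_h(v)$ is a function of the rooted marked graph $(G,v,\Theta)$ alone, by rigidity (Theorem~\ref{thm-intro-rigidity-cp}). Hence $Z_n:=|\log\tanh(r_h(X_n)/2)|$ is a stationary sequence under $\widehat{\mathbb P}$. The standard fact that a stationary sequence of a.s.\ finite random variables satisfies $Z_n/n\to0$ almost surely holds without integrability: for each $\varepsilon>0$, $\sum_n\widehat{\mathbb P}(Z_n>\varepsilon n)=\sum_n\widehat{\mathbb P}(Z_0>\varepsilon n)<\infty$ would need $\widehat{\E}[Z_0]<\infty$. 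So integrability is what I need, and this is the main obstacle.

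To bound $\widehat{\E}[Z_0]$, I would argue as follows. Among the neighbors of a vertex, the bounded-angle geometry forces the flower of $v$ to have hyperbolic size comparable to $r_h(v)$. The small-radius regime is controlled by the refined Ring Lemma: applying Lemma~\ref{lem:ring-two-sided-boundary} in the hyperbolic normalization centered at $v$, where the neighboring hyperbolic radii cannot all be tiny, gives $|\log r_h(v)|\lesssim S(v)+\sum_{w\sim v}S(w)$ plus a constant, which is integrable by Lemma~\ref{lem:flower-integrability} and a further mass transport.

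If that bound is hard to obtain, I would fall back on the weaker route: apply the Ring Lemma to successive increments, so that $Z_n\le Z_0+C\sum_{j<n}(S(X_j)+S(X_{j+1}))$. This is only $O(n)$, not $o(n)$, so the fallback is insufficient. I would therefore insist on the direct stationarity argument, and if necessary prove only that $Z_0<\infty$ a.s. I would then use the fact that for a stationary sequence $\liminf Z_n/n=0$, combined with Birkhoff applied to the integrable increments $\log R_n$ and the existence of the limits in Lemmas~\ref{lem:mobius-radius-distortion} and \ref{lem:stationary-radius-cocycle}, to upgrade to a full limit.
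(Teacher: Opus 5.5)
\textbf{There is a genuine gap: the main route needs an integrability that is not available in general, and the fallback as written lacks one ingredient.}

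\emph{The main route.} After Steps 1--2 everything depends on $\widehat{\E}\bigl[|\log\tanh(r_h(\rho)/2)|\bigr]<\infty$. You propose to obtain this from a local bound $|\log r_h(v)|\lesssim S(v)+\sum_{w\sim v}S(w)+C$. The refined Ring Lemma cannot give such a bound, because it only controls ratios of neighbouring radii and says nothing about the absolute hyperbolic scale of a flower. Nothing prevents every circle near $v$ from being hyperbolically tiny, and in fact the bound is false.

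Take $\Theta\equiv 0$, i.e.\ a circle packing, which satisfies all the standing hypotheses. Put it on a disk triangulation containing a capped triangulated cylinder with three vertices per layer and depth $k$. Degrees inside the tube are at most $6$. However, each layer lies in the interstice of the three mutually tangent circles of the previous layer. So radii decay geometrically with depth, and $\log(1/r_h)$ grows linearly along the tube.

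Now insert such tubes into the faces of a unimodular hyperbolic triangulation, with independent depths satisfying $\mathbb{P}(\text{depth}=k)\asymp k^{-5/2}$. One obtains a bounded-degree law satisfying the assumptions of this section with $\widehat{\E}[\log(1/r_h(\rho))]=\infty$. So the integrability your main route requires does not hold in general.

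\emph{An error in Step 2.} Your own M\"obius formula gives $r/\eta=t(1+a)/(1-a^2t^2)$, with $a=|z_h|$ and $t=\tanh(s/2)$. This ratio is unbounded as $a,t\to 1$, so $\log r\le\log\eta+\log 2$ is false. The correct bounds are $\tanh(s/2)\le r/\eta\le\sinh s$, so you must also show $r_h(X_n)=o(n)$. This part is repairable: by the hyperbolic law of cosines each face angle at $v$ is $O_{\varepsilon_1}(e^{-r_h(v)})$, hence $r_h(v)\le\log\deg(v)+C$.

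\emph{The fallback.} Poincar\'e recurrence applied to the a.s.\ finite stationary sequence $W_n:=|\log\tanh(r_h(X_n)/2)|+r_h(X_n)$ gives a random $M$ and infinitely many $n$ with $W_n\le M$. At those times $|\log r(X_n)-\log(1-|z_h(X_n)|)|\le C(M)$. But Birkhoff's theorem for $\log R_n$, together with Lemmas~\ref{lem:stationary-radius-cocycle} and~\ref{lem:mobius-radius-distortion}, only gives the limit of $-\log r(X_n)/n$. To pass from a subsequence to the full sequence you also need convergence of $d_{\mathrm h}(z_h(\rho),z_h(X_n))/n$, which you never establish.

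That convergence is the missing ingredient, and it is available. The process $d_{\mathrm h}(z_h(X_m),z_h(X_n))$ is stationary and subadditive; it is well defined by rigidity, since disk automorphisms are isometries. It is integrable by Lemma~\ref{lem:integrable-local-distortion}. Kingman's subadditive ergodic theorem therefore gives a limit $\ell$, and evaluating along the recurrence times forces $\ell=\lambda$. With this addition your fallback becomes a valid proof that differs genuinely from the paper's and needs no integrability of $\log r_h$.

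\emph{How the paper proceeds.} It gets the direction you tried to control locally (that $z_h(X_n)$ lies within $r(X_n)e^{o(n)}$ of $\partial\mathbb{D}$) from the future of the walk:
$1-|z_h(X_n)|\le|z_h(X_n)-\Xi|\le 3\sum_{j\ge n}r(X_j)\le r(X_n)e^{o(n)}$,
using $-\log r(X_j)/j\to\lambda>0$. Only the opposite direction, where the danger is a hyperbolically large circle, is handled by a local ring-lemma estimate with a stationary integrable error.
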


\begin{proof}
In the Poincaré disk,
\[
    d_{\mathrm h}(0,x)
    =
    \log\frac{1+|x|}{1-|x|}
    =
    -\log(1-|x|)+O(1)
\]
as \(x\to\partial\D\). The geometry of the RCP gives
\[
    -\log\bigl(1-|z_h(X_n)|\bigr)
    =
    -\log r(X_n)+o(n).
\]
The upper bound follows from
\[
    1-|z_h(X_n)|
    \leq
    C\sum_{j\geq n}r(X_j),
\]
using exponential decay of the radii. The reverse bound follows by comparing
the boundary distance of the circle at \(X_n\) with the radius of a
neighboring circle and applying the refined Ring Lemma. The resulting error
is a stationary integrable local term and is therefore \(o(n)\).
Changing the hyperbolic basepoint contributes only a bounded additive
constant, see \cite{angel2016unimodular, IIP2026}.
\end{proof}

\begin{corollary}[Positive speed]
\label{cor:positive-speed-2}
Under the same hypotheses, conditionally on \((G,\rho,\Theta)\), almost
surely,
\[
    \lim_{n\to\infty}
    \frac{
        d_{\mathrm h}\bigl(z_h(\rho),z_h(X_n)\bigr)
    }{n}
    =
    \lim_{n\to\infty}
    \frac{-\log r(X_n)}{n}
    =
    \lambda
    >0.
\]
If \((G,\rho,\Theta)\) is ergodic, then \(\lambda\) is an almost sure
constant.
\end{corollary}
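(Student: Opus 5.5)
The corollary follows by assembling Lemmas~\ref{lem:stationary-radius-cocycle}--\ref{lem:radius-distance-comparison}; the plan is to do this explicitly in three steps.

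\emph{Step 1: the radius rate.} Work first under the degree-biased reversible law $\widehat{\mathbb P}$ with the bi-infinite walk. Lemma~\ref{lem:stationary-radius-cocycle} gives a finite, shift-invariant $\lambda$ with $-\log\widehat r(X_n)/n\to\lambda$. Lemma~\ref{lem:mobius-radius-distortion} then transfers this limit to the original Euclidean radii in $\D$, i.e. $-\log r(X_n)/n\to\lambda$. Positivity $\lambda>0$ is Lemma~\ref{lem:positive-radius-drift}, which rests on the exponential decay of Lemma~\ref{lem:exponential-radius-decay}.

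\emph{Step 2: the hyperbolic distance rate.} By Lemma~\ref{lem:radius-distance-comparison},
\[
d_{\mathrm h}\bigl(z_h(\rho),z_h(X_n)\bigr)=-\log r(X_n)+o(n).
\]
Dividing by $n$ and using Step~1 shows that the distance limit exists and equals $\lambda$.

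\emph{Step 3: passing to the original law.} The statements are almost sure for the joint law of the ADT and the forward walk. The forward walk $(X_n)_{n\ge0}$ under $\widehat{\mathbb P}$ has the same conditional law given $(G,\rho,\Theta)$ as under $\mathbb P$. Moreover, $\widehat{\mathbb P}$ and $\mathbb P$ are mutually absolutely continuous, since $\deg_G(\rho)\in(0,\infty)$. Hence almost-sure events transfer, and Fubini gives the "conditionally on $(G,\rho,\Theta)$, almost surely" formulation.

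The one subtlety is that $\lambda$ was defined via $\Xi^-$, a functional of the backward walk. Since it equals the limit of $-\log r(X_n)/n$, it is in fact measurable with respect to the forward walk, so this causes no problem.

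For ergodicity: if $(G,\rho,\Theta)$ is ergodic, the stationary bi-infinite walk under $\widehat{\mathbb P}$ is ergodic, which is the standard fact for reversible unimodular environments. Lemma~\ref{lem:stationary-radius-cocycle} then makes $\lambda$ an almost sure constant.

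The main obstacle is not in this assembly but in the error control within Lemma~\ref{lem:radius-distance-comparison}. There, the local distortion terms $C(S(X_n)+S(X_{n+1}))$ must be shown to be $o(n)$ along the walk. I would verify this via Birkhoff's theorem applied to the stationary sequence $S(X_n)$: under $\widehat{\mathbb P}$ it has finite mean by Lemma~\ref{lem:flower-integrability}, which uses the third-moment assumption. Finiteness of the mean then gives $S(X_n)/n\to0$ almost surely.
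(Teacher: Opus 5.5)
Your proposal is correct and is essentially the paper's proof: you obtain the radius rate from Lemma~\ref{lem:stationary-radius-cocycle} and Lemma~\ref{lem:mobius-radius-distortion}, the distance rate from Lemma~\ref{lem:radius-distance-comparison}, positivity from Lemma~\ref{lem:positive-radius-drift}, and constancy of $\lambda$ under ergodicity from triviality of the invariant $\sigma$-field of the stationary walk, just as the paper does. Your additional remarks are sound and make explicit what the paper leaves implicit: the transfer of almost-sure statements from $\widehat{\mathbb P}$ to $\mathbb P$, the forward-measurability of $\lambda$ despite its definition through $\Xi^-$, and the $o(n)$ control of $S(X_n)$ via $\widehat{\E}[S(\rho)]\le \E[\deg_G(\rho)^3]/\E[\deg_G(\rho)]<\infty$.
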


\begin{proof}
Lemma~\ref{lem:mobius-radius-distortion} gives
\[
    \lim_{n\to\infty}
    \frac{-\log r(X_n)}{n}
    =
    \lambda.
\]
Together with Lemma~\ref{lem:radius-distance-comparison}, this yields
\[
    \lim_{n\to\infty}
    \frac{
        d_{\mathrm h}\bigl(z_h(\rho),z_h(X_n)\bigr)
    }{n}
    =
    \lim_{n\to\infty}
    \frac{-\log r(X_n)}{n}
    =
    \lambda.
\]
The positivity \(\lambda>0\) follows from
Lemma~\ref{lem:positive-radius-drift}. Finally, \(\lambda\) is
shift-invariant and invariant under rerooting. Hence, if the marked
random rooted ADT \((G,\rho,\Theta)\) is ergodic, then \(\lambda\) is
almost surely constant.
\end{proof}

\section{Further discussions}

\subsection{Boundary theories}

For hyperbolic unimodular triangulations, random-walk convergence to the
circle-packing boundary and the identification of the Poisson boundary are
well established in the circle-packing framework
\cite{AngelBarlowGurelGurevichNachmias2016,angel2016unimodular}.  Hutchcroft and Peres further showed that, for
planar graphs, Poisson boundaries can often be compared with geometric
boundaries and are stable under rough-isometric changes
\cite{HutchcroftPeres2017}.  In the present paper, the boundary is obtained
from the RCP realization. The boundary may also depend on the chosen geometric realization.  
Since the same planar graph may admit different discrete-conformal models, such as circle packings, RCPs, or square tilings \cite{BenjaminiSchramm1996,BrooksSmithStoneTutte1940,
CannonFloydParry1994,Georgakopoulos2016,Northshield1992,Schramm1993}, one may ask whether the RCP boundary is intrinsic to the graph or depends on the particular realization.
Moreover, it is therefore natural to ask whether this
boundary has an intrinsic interpretation in terms of the associated hyperbolic polyhedron.

\begin{question}[Polyhedral boundary]
Let \(P\) be a tame unimodular infinite trivalent hyperbolic polyhedron
whose corresponding ADT satisfies the hypotheses of Corollary~\ref{thm:0.2}.  Is there
a natural intrinsic boundary \(\partial_{\mathrm{int}}P\), defined directly
from the hyperbolic polyhedral metric of \(P\), such that
\[
\partial_{\mathrm{int}}P \cong \partial\mathbb D,
\]
Equivalently, can the almost sure random-walk limit obtained
from the RCP model be interpreted intrinsically as a boundary point of
\(P\)?
\end{question}

A related large-scale question concerns the corresponding ADT.  If the ADT
equipped with a graph metric is Gromov hyperbolic, then it has a Gromov
boundary in the sense of Gromov \cite{Gromov1987}.  For negatively curved
manifolds and hyperbolic groups, Martin boundary theory often identifies
positive harmonic functions with geometric boundary data
\cite{Ancona1987,GouezelLalley2013,Gouezel2015,Kaimanovich2000}.
One may therefore ask whether this boundary agrees with the circle-pattern
boundary.

\begin{question}[Gromov and Martin boundaries]
Under what additional assumptions is the ADT \(G(P)\) Gromov hyperbolic?  In that
case, is there a canonical identification
\[
\partial_{\mathrm{Gromov}}G(P) \cong \partial\mathbb D ?
\]
Moreover, can
the Martin boundary of \(G(P)\), which represents positive harmonic
functions, also be identified with $\partial D$?
\end{question}

\subsection{Variational viewpoints}


In the finite setting, \(L\) has a standard variational interpretation: 
the curvature one-form \(\sum_{i\in V} K_i\, du_i\), 
with \(u_i=\log r_i\), is closed and defines an energy whose gradient 
is the discrete curvature \(K\); in particular, 
\(L_i=K_i\) at the unit-radius configuration \(u\equiv 0\). 
For an infinite triangulation, however, the corresponding total energy 
need not be finite.

\begin{question}[Geometric characteristic number]
Is there a natural infinite-volume or unimodular energy functional 
whose first variation is given by the geometric characteristic \(L\)? 
More precisely, can the finite-dimensional variational structure 
of circle-pattern curvature be extended beyond finitely supported 
perturbations to the infinite unimodular setting?

\end{question}

\subsection{Higher-valence polyhedra}

The trivalent assumption is essential in this paper because the ADT
is a triangulation, so the angles determine the local triangular face
angles used in the definition of the geometric characteristic number. 
For more general hyperbolic polyhedra, the dual faces need not be triangles, and the intersection angles alone may no longer determine the corresponding face angles.

\begin{question}
Can the unimodular parabolic--hyperbolic dichotomy be extended from
trivalent hyperbolic polyhedra to more general infinite hyperbolic
polyhedral complexes?  If so, what curvature or character should replace
\[
L_u(G,\Theta)=2\pi-\sum_{f\ni u}\theta_u^f
\]
when the dual complex is no longer a triangulation?
\end{question}

This problem has been partially addressed in the ideal case.  In
\cite{IIP2026}, the authors studied random infinite ideal angled
graphs (IAG) and ideal hyperbolic polyhedra (IHP).  
For such ideal polyhedra, the
appropriate geometric characteristic number is defined directly from the angles by
\[
T(u)=2\pi-\sum_{e\ni u}\Theta(e),
\]
We proved a corresponding unimodular parabolic--hyperbolic
dichotomy and developed the associated boundary theory.  Thus the
higher-valence problem is understood for infinite ideal hyperbolic
polyhedra.  However, for general  infinite hyperbolic polyhedra,
the correct replacement for the geometric characteristic number remains unclear.

\section*{Author information}

\begin{multicols}{2}

\AuthorBlock
  {Huabin Ge}
  {Renmin University of China}
  {hbge@ruc.edu.cn}

\AuthorBlock
  {Chuwen Wang}
  {Renmin University of China}
  {chuwenwang@ruc.edu.cn}

\vfill\null
\columnbreak

\AuthorBlock
  {Yangxiang Lu}
  {Renmin University of China}
  {2023000744@ruc.edu.cn}

\AuthorBlock
  {Tian Zhou}
  {Peking University}
  {2201110034@pku.edu.cn}

\end{multicols}

\end{document}